\documentclass[10pt]{article}
\usepackage{amsmath, amsthm, amssymb, amsfonts, mathrsfs, mathtools, upgreek}
\usepackage{graphicx}
\usepackage{makeidx}
\usepackage[left=2cm,right=2cm,top=2cm,bottom=2cm]{geometry}
\usepackage{caption}
\usepackage{subcaption}
\usepackage[section]{placeins}
\usepackage{enumitem}
\usepackage{tikz}
\usepackage{stmaryrd}
\usepackage{hyperref}
\usepackage{upquote}


\usepackage{hyperref}
\hypersetup{colorlinks=true, citecolor={blue}, linkcolor=blue, filecolor=magenta, urlcolor=cyan}

\newtheorem{theorem}{Theorem}[section]
\newtheorem*{thm}{Theorem}

\newtheorem{lema}[theorem]{Lemma}
\newtheorem{corollary}{Corollary}[theorem]
\theoremstyle{definition}
\newtheorem{defi}{Definition}

\numberwithin{equation}{section}
\newcommand{\ip}[2]{\langle #1, #2 \rangle}
\newcommand{\divides}{\mid}
\newcommand{\srg}{\operatorname{SRG}}
\newcommand{\cay}{\operatorname{Cay}}
\newcommand{\spec}{\operatorname{Spec}}
\newcommand{\lcm}{\operatorname{lcm}}
\newcommand{\uc}{\operatorname{UC}}
\newcommand{\iu}{{\mathbf{i}}}

\def \m {{|}}

\def \Zl {{\mathbb Z}}
\def \Nl {{\mathbb N}}

\def \Zl {{\mathbb Z}}
\def \Ql {{\mathbb Q}}
\def \Cl {{\mathbb C}}

\def \ld {{\lambda}}

\def \eu {{\textbf{e}_u}}
\def \ev {{\textbf{e}_v}}
\textheight=22cm \textwidth=16cm \oddsidemargin=0.2in
\evensidemargin=0.2in \topmargin=-0.25in

\title{Grover walks on unitary Cayley graphs and integral regular graphs}

\author{ Koushik Bhakta and Bikash Bhattacharjya\\
	Department of Mathematics\\
	Indian Institute of Technology Guwahati, India\\
	b.koushik@iitg.ac.in, b.bikash@iitg.ac.in }

\date{}
\begin{document}
	\maketitle
	
	\vspace{-0.3in}
	
	\begin{center}{\textbf{Abstract}}\end{center}
	\noindent The unitary Cayley graph  has vertex set $\{0,1, \hdots ,n-1\}$, where two vertices  $u$ and $v$ are adjacent if $\gcd(u - v, n) = 1$. In this paper, we study periodicity and perfect state transfer of Grover walks on the unitary Cayley graphs. We characterize all periodic unitary Cayley graphs. We prove that periodicity is a necessary condition for occurrence of perfect state transfer on a vertex-transitive graph. Also, we provide a necessary and sufficient condition for the occurrence of perfect state transfer on circulant graphs. Using these, we prove that only four graphs in the class of unitary Cayley graphs exhibit perfect state transfer. Also, we provide a spectral characterization of the periodicity of Grover walks on integral regular graphs.

	\vspace*{0.3cm}
	\noindent 
	\textbf{Keywords.} Grover walk, unitary Cayley graph, Ramanujan sum, periodicity, perfect state transfer\\
	\textbf{Mathematics Subject Classifications:} 05C50, 81Q99
	
	\section{Introduction}
	
	The study of quantum walk \cite{quantum} on graphs is an important area that lies at the intersection of quantum computing and graph theory. It serves as a fundamental building block for various quantum algorithms and has applications in fields such as computer science, physics, and even biology. Discrete-time quantum walks \cite{random} is a quantum analogue of the classical random walk, where a particle moves through space in discrete steps according to specific rules. In a classical random walk, a particle moves from one vertex to another in a graph with a probability determined by the edges connecting that vertex. However, in a quantum walk, the movement of the particle is governed by the principles of quantum mechanics, allowing it to exhibit unique behaviors such as superposition and interference. Quantum walks have been extensively studied in both physics and mathematics. One of the most exciting properties of quantum walks is their capability for perfect state transfer, which essentially involves transferring between specific quantum states with a probability 1. Periodicity is a case of state transfer in which any initial state returns to its original state at a particular time. There are two types of quantum walks: continuous-time quantum walk and discrete-time quantum walk. The continuous-time quantum walk has been explored for a relatively long time, see \cite{circulant1, circulant2, periodic, state, when}. We also find studies of perfect state transfer in discrete-time quantum walks, though not as much as in continuous-time quantum walks, see \cite{godsildct, dqw3, dqw1}. Recently, Chan and Zhan \cite{pgstdqw} defined pretty good state transfer in discrete-time quantum walks.

	This paper explores the concepts of periodicity and perfect state transfer in the Grover walks, a type of discrete-time quantum walks. We say that a graph is a \emph{$k$-periodic Grover walk} if and only if $k$ is the smallest positive integer such that the $k$-th power of the time evolution operator becomes the identity. The research on this topic has become very active in the past decade. Higuchi et al. \cite{discrete} showed that the Grover walks on the complete graph $K_n$ is periodic if and only if $n=2$ or $n=3$. Also, they proved that the Grover walks on the complete bipartite graph $K_{r,s}$ is periodic for any $r,s\in \Nl$. Ito et al. \cite{completegraph} proved that the Grover walks on complete graphs on $n$ vertices with a self-loop at each vertex is periodic with period $2n$. In \cite{bethetrees}, Kubota et al. provided a comprehensive characterization of the periodicity of generalized Bethe trees. Yoshie \cite{distance} studied the periodicity of Grover walks on distance regular graphs. Kubota \cite{bipartite} explored the periodicity of Grover walks on regular bipartite graphs with at most five distinct adjacency eigenvalues. Also, he explored Grover walks on regular mixed graphs in \cite{kubota}. There are a few more studies on the periodicity of Grover walks, see \cite{qq, yoshie, oddperiodic}. 
	
	A graph exhibiting perfect state transfer is quite important due to its application in quantum information processing. Zhan \cite{dqw1} established an infinite family of $4$-regular circulant graphs where perfect state transfer occurs. Barr et al. \cite{barr} studied the perfect state transfer of Grover walks on variants of cycles. Kubota et al. \cite{pstdc} established a necessary condition about the eigenvalues of a graph to enable perfect state transfer between states associated with vertices. Additionally, they provided necessary and sufficient conditions for occurrence perfect state transfer in complete multipartite graphs. In this paper, we study periodicity and perfect state transfer on unitary Cayley graphs. The following are our main results. For terminologies and proof, see later sections.
	\begin{thm}[\em{Theorem} 3.6]
		The unitary Cayley graph $\uc(n)$ is periodic if and only if  $n=2^\alpha 3^\beta$, where $\alpha ~\text{and}~ \beta $ are non-negative integers with $\alpha + \beta \neq 0 $.
	\end{thm}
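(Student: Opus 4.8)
The plan is to reduce periodicity of the Grover walk to an arithmetic condition on the adjacency spectrum, and then analyze that condition using the known evaluation of Ramanujan sums. First I would recall that $\uc(n)$ is a circulant graph that is $\varphi(n)$-regular, and that its adjacency eigenvalues are exactly the Ramanujan sums $c_n(j)=\sum_{\gcd(a,n)=1}e^{2\pi\iu aj/n}$ for $j=0,1,\ldots,n-1$. These are integers, and by the standard evaluation $c_n(j)=\mu(n/d)\,\varphi(n)/\varphi(n/d)$ with $d=\gcd(n,j)$. Writing $k=\varphi(n)$ for the degree and dividing through, the distinct values of $\lambda/k$ over all adjacency eigenvalues $\lambda$ are precisely $\{\mu(m)/\varphi(m):m\divides n\}$, since $m=n/d$ runs over all divisors of $n$ as $j$ varies.

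Next I would invoke the spectral mapping theorem for Grover walks: on a $k$-regular graph, every adjacency eigenvalue $\lambda$ lifts to a pair of eigenvalues $e^{\pm\iu\arccos(\lambda/k)}$ of the time-evolution operator $U$, while the remaining eigenvalues of $U$ are $\pm 1$. Since $\pm 1$ are roots of unity, the walk is periodic if and only if every lifted eigenvalue $e^{\iu\arccos(\lambda/k)}$ is a root of unity, i.e.\ $\tfrac{1}{\pi}\arccos(\lambda/k)\in\Ql$ for every adjacency eigenvalue $\lambda$. Because all eigenvalues of $\uc(n)$ are integers and $k=\varphi(n)$, each ratio $\lambda/k$ is rational; Niven's theorem then gives $\arccos(\lambda/k)/\pi\in\Ql$ exactly when $\lambda/k\in\{0,\pm\tfrac12,\pm 1\}$. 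Hence $\uc(n)$ is periodic if and only if $\mu(m)/\varphi(m)\in\{0,\pm\tfrac12,\pm 1\}$ for every divisor $m$ of $n$.

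Finally I would carry out the elementary number theory. If $m$ is not squarefree then $\mu(m)=0$ and the ratio is $0$, imposing no constraint; if $m$ is squarefree then $\mu(m)=\pm 1$, so the condition becomes $\varphi(m)\le 2$, i.e.\ $m\in\{1,2,3,6\}$. Thus periodicity is equivalent to requiring every squarefree divisor of $n$ to lie in $\{1,2,3,6\}$. Since the squarefree divisors of $n$ are exactly the products of subsets of the distinct prime factors of $n$, any prime $p\ge 5$ dividing $n$ would itself be a squarefree divisor with $\varphi(p)=p-1\ge 4$, violating the condition; conversely, if the only primes dividing $n$ are $2$ and $3$, every squarefree divisor lies in $\{1,2,3,6\}$, each with $\varphi\le 2$. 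Therefore $\uc(n)$ is periodic exactly when $n=2^\alpha 3^\beta$, the hypothesis $\alpha+\beta\neq 0$ simply excluding the trivial case $n=1$.

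The step I expect to be the main obstacle is not the arithmetic but pinning down the precise periodicity criterion: one must verify that the $\pm 1$ eigenvalues of $U$, whose multiplicities depend on $|V|$ and $|E|$, never obstruct periodicity, so that the condition genuinely reduces to the lifted eigenvalues, and one must justify applying Niven's theorem only after observing that the spectrum is integral (which is what lets us avoid the more delicate cyclotomic argument needed for irrational $\lambda/k$). Once this reduction is secure, identifying the divisor condition with $n=2^\alpha 3^\beta$ is routine.
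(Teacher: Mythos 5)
Your proposal is correct, and its skeleton matches the paper's: both reduce periodicity to the spectral condition $\lambda/\varphi(n)\in\{0,\pm\tfrac12,\pm1\}$ for all adjacency eigenvalues $\lambda$, and both then exploit the evaluation $R(j,n)=\mu(c_{n,j})\varphi(n)/\varphi(c_{n,j})$. The differences are in how each half is executed. For the spectral criterion, the paper proves it self-containedly (its Theorem 3.1): if $U^r=I$ then each eigenvalue of $U$ is an algebraic integer, so $2\lambda/k=e^{\iu\arccos(\lambda/k)}+e^{-\iu\arccos(\lambda/k)}$ is an algebraic integer which is rational (integrality of $G$), hence an integer in $[-2,2]$; you instead quote Niven's theorem, which is exactly this statement packaged as a named result --- legitimate, but the paper's argument is essentially an inline proof of the direction of Niven's theorem you need. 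For the arithmetic, your route is genuinely different and tidier: you note that as $j$ varies, $m=n/\gcd(n,j)$ runs over \emph{all} divisors of $n$, so the set of ratios $\lambda_j/\varphi(n)$ is exactly $\{\mu(m)/\varphi(m):m\divides n\}$, and periodicity becomes ``every squarefree divisor of $n$ has $\varphi(m)\le 2$,'' which settles necessity and sufficiency simultaneously. The paper instead splits this into a necessity lemma (evaluating the eigenvalue $\lambda_m$ for $n=pm$, forcing $\varphi(p)\le 2$) followed by three separate explicit spectrum computations for $n=2^\alpha$, $3^\beta$, $2^\alpha 3^\beta$. What the paper's longer route buys is the exact spectra, e.g.\ $\spec_A(\uc(2^\alpha 3^\beta))=\{\pm\varphi(n),\pm\tfrac{\varphi(n)}{2},0\}$, which it reuses afterwards to compute the periods ($4$ and $12$) and in the perfect-state-transfer analysis of $\uc(12)$; your unified argument proves the theorem as stated more economically but would need those computations restored for the later sections.
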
 
	\begin{thm}[\em{Theorem} 4.2]
		Let $\mu_0,\mu_1,\hdots,\mu_{n-1}$  be the  eigenvalues of the discriminant of a circulant graph $\cay(\Zl_n, C)$, where $\mu_j=\frac{1}{|C|}\sum_{s\in C} e^{\frac{2\pi j s \iu }{n}}$ for $j\in\{0,\hdots, n-1\}$. Then prefect state transfer occurs in $\cay(\Zl_n, C)$ from a vertex $u$ to another vertex $v$ at time $\tau$ if and only if all of the following conditions hold.
	\begin{enumerate}[label=(\roman*)]
		\item $n$ is even and $u-v=\frac{n}{2}$.
		\item $T_\tau(\mu_j)=\pm1$ for $j\in\{0,\hdots,n-1\}$, where $T_n(x)$ is the Chebyshev polynomial of the first kind.
		\item $T_\tau(\mu_j)\neq T_\tau(\mu_{j+1})$ for $j\in\{0,\hdots,n-2\}.$
	\end{enumerate}
	\end{thm}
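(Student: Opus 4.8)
The plan is to reduce vertex-to-vertex perfect state transfer to a single scalar condition on the discriminant, and then to extract the three stated conditions from a sharp triangle inequality. Write $\eu=K\delta_u$ for the standard vertex state, where $K:\Cl^{V}\to\Cl^{A}$ is the boundary operator with $K^*K=I$ (so $\eu$, the normalized superposition of the arcs leaving $u$, is a unit vector); let $U$ be the time evolution operator of the Grover walk, built from the arc-reversal $S$ (with $S^2=I$) and the Grover coin $2KK^*-I$, and let $\mathcal{D}=K^*SK$ be the discriminant, which for the $|C|$-regular graph $\cay(\Zl_n,C)$ equals $\tfrac{1}{|C|}A$. Since $U$ is unitary and $K$ is an isometry, $U^\tau\eu$ and $\ev$ are unit vectors, so by the equality case of Cauchy--Schwarz, perfect state transfer $U^\tau\eu=\gamma\ev$ with $|\gamma|=1$ occurs if and only if $|\ip{\ev}{U^\tau\eu}|=1$. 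The first key step is the operator identity $K^*U^\tau K=T_\tau(\mathcal{D})$, with $T_\tau$ the Chebyshev polynomial of the first kind. I would derive it from the two relations $UK=SK$ and $USK=2SK\mathcal{D}-K$ (immediate from $K^*K=I$, $S^2=I$, and the form of the coin): these give $K^*U^0K=I$, $K^*U K=\mathcal{D}$, and the recurrence $K^*U^{\tau+1}K=2\mathcal{D}\,K^*U^\tau K-K^*U^{\tau-1}K$, which is exactly the Chebyshev recurrence (equivalently, the Cayley--Hamilton relation $M^2=2\mathcal{D}M-I$ for $M=\bigl(\begin{smallmatrix}0&-I\\ I&2\mathcal{D}\end{smallmatrix}\bigr)$). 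Consequently $\ip{\ev}{U^\tau\eu}=\bigl(T_\tau(\mathcal{D})\bigr)_{vu}$, so perfect state transfer at time $\tau$ is equivalent to $\bigl|(T_\tau(\mathcal{D}))_{vu}\bigr|=1$.

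Next I would diagonalize. As $\cay(\Zl_n,C)$ is circulant with $C=-C$, the matrix $\mathcal{D}$ is a real symmetric circulant diagonalized by the Fourier basis, with eigenvalue $\mu_j=\tfrac{1}{|C|}\sum_{s\in C}\w^{js}$ (real, and lying in $[-1,1]$ as an average of cosines) on the character $\w^{j\cdot}$, where $\w=e^{2\pi\iu/n}$. Hence $T_\tau(\mathcal{D})$ is again circulant and its entries read off as $(T_\tau(\mathcal{D}))_{vu}=\tfrac{1}{n}\sum_{j=0}^{n-1}T_\tau(\mu_j)\,\w^{j(v-u)}$. Because $\mu_j\in[-1,1]$ we have $|T_\tau(\mu_j)|=|\cos(\tau\arccos\mu_j)|\le 1$, so two successive applications of the triangle inequality give $|(T_\tau(\mathcal{D}))_{vu}|\le\tfrac{1}{n}\sum_j|T_\tau(\mu_j)|\le 1$. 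Perfect state transfer therefore forces both inequalities to be equalities simultaneously.

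The second inequality is an equality exactly when $|T_\tau(\mu_j)|=1$ for every $j$, i.e.\ $T_\tau(\mu_j)=\pm1$, which is condition (ii). Given (ii), the triangle inequality is an equality exactly when the unit complex numbers $T_\tau(\mu_j)\w^{j(v-u)}$ are all equal; evaluating at $j=0$ and using $\mu_0=\tfrac{1}{|C|}\sum_{s\in C}1=1$, hence $T_\tau(\mu_0)=T_\tau(1)=1$, pins the common value to $1$. Thus the equality condition reads $\w^{j(v-u)}=T_\tau(\mu_j)\in\{\pm1\}$ for all $j$. Taking $j=1$ shows $\w^{v-u}=\pm1$, so $2(v-u)\equiv0\Mod{n}$; since $u\neq v$ this gives $n$ even and $u-v\equiv\tfrac{n}{2}\Mod{n}$, which is condition (i). With $v-u=\tfrac{n}{2}$ one has $\w^{j(v-u)}=(-1)^j$, so the requirement becomes $T_\tau(\mu_j)=(-1)^j$; as the values $T_\tau(\mu_j)$ lie in $\{\pm1\}$ with $T_\tau(\mu_0)=1$, this alternating pattern is equivalent to $T_\tau(\mu_j)\neq T_\tau(\mu_{j+1})$ for all $j$, which is condition (iii). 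Retracing the equivalences establishes the converse as well.

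I expect the main obstacle to be the equality analysis of the triangle inequality: one must argue carefully that aligning all the phases of $T_\tau(\mu_j)\w^{j(v-u)}$ forces both the arithmetic constraint $u-v=\tfrac n2$ and the alternation $T_\tau(\mu_j)=(-1)^j$, and that conditions (ii) and (iii) together are precisely this alternating pattern once $\mu_0=1$ is used to fix the base sign. The operator identity $K^*U^\tau K=T_\tau(\mathcal{D})$ is the other load-bearing ingredient, but it is a clean consequence of the recurrence above (or may be quoted from the spectral correspondence between the Grover walk and its discriminant established earlier).
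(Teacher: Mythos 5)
Your proof is correct, and its backbone coincides with the paper's: both reduce perfect state transfer to the scalar condition $|T_\tau(P)_{uv}|=1$ (via Lemma \ref{st11} and the identity $dU^\tau d^*=T_\tau(P)$ of Lemma \ref{ch11}, which you re-derive from the Chebyshev recurrence rather than quote), expand that entry in the Fourier basis as $\frac{1}{n}\sum_{j}T_\tau(\mu_j)\,\omega_n^{j(u-v)}$, and analyze when the triangle inequality is tight. The genuine divergence is where condition (ii) comes from. The paper obtains it from Theorem \ref{p1} (the Kubota--Segawa necessary condition on the eigenvalue support), after arguing via vertex-transitivity that $\Theta_P(u)=\spec_P(\cay(\Zl_n,C))$; only then does it characterize tightness through the congruences $\frac{k_{j+1}-k_j}{2}+\frac{1}{n}(u-v)\in\Zl$. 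You instead read (ii) off the chain $1=|T_\tau(P)_{uv}|\le\frac{1}{n}\sum_j|T_\tau(\mu_j)|\le 1$, whose second inequality must also be an equality; this removes the dependence on Theorem \ref{p1} and on the eigenvalue-support machinery altogether. A second, smaller difference: to force $n$ even and $u-v=\frac{n}{2}$, the paper exploits the symmetry $\mu_j=\mu_{n-j}$ (for odd $n$ this produces two equal consecutive eigenvalues, contradicting the alignment), whereas you anchor the common value of the aligned unimodular summands at $j=0$ using $\mu_0=1$ and $T_\tau(1)=1$, so the $j=1$ term yields $\omega_n^{v-u}=-1$, hence (i), and the alternation $T_\tau(\mu_j)=(-1)^j$, which given $T_\tau(\mu_0)=1$ is exactly (iii). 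Both mechanisms are sound; your route is shorter and self-contained, while the paper's makes the palindromic spectral structure explicit and reuses a general theorem it has already set up for other purposes.
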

	\begin{thm}[\em{Theorem} 4.5]
		The only unitary Cayley graphs $\uc(n)$ exhibiting perfect state transfer are $K_2,~C_4,~C_6$ and $\uc(12)$. 
	\end{thm}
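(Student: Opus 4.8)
The plan is to combine the two structural results already available: the periodicity characterization of Theorem 3.6 and the circulant PST criterion of Theorem 4.2. First I would observe that $\uc(n) = \cay(\Zl_n, C)$ with $C = \{s : \gcd(s,n) = 1\}$ is a Cayley graph on the abelian group $\Zl_n$, hence vertex-transitive. Since periodicity is necessary for perfect state transfer on a vertex-transitive graph, any $\uc(n)$ admitting PST must be periodic, and Theorem 3.6 forces $n = 2^\alpha 3^\beta$ with $\alpha + \beta \neq 0$. Because $\uc(n)$ is circulant, Theorem 4.2 applies directly; its condition (i) requires $n$ to be even, so $\alpha \geq 1$. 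This reduces the whole problem to deciding, among the $n = 2^\alpha 3^\beta$ with $\alpha \geq 1$, for which $n$ there is a time $\tau$ meeting conditions (ii) and (iii).

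Next I would make the discriminant eigenvalues explicit. For $\uc(n)$ the exponential sum $\sum_{s \in C} e^{2\pi \iu j s / n}$ is exactly the Ramanujan sum $c(j,n)$ and $|C| = \varphi(n)$, so $\mu_j = c(j,n)/\varphi(n) = \mu(n/d)/\varphi(n/d)$ with $d = \gcd(j,n)$, where $\mu(\cdot)$ denotes the M\"obius function (the same evaluation used for the Ramanujan sums in Section 3; note the clash of notation with the eigenvalues $\mu_j$). Every divisor of $n = 2^\alpha 3^\beta$ has the form $2^a 3^b$, so its only squarefree divisors are $1, 2, 3, 6$, giving $\mu(m)/\varphi(m) \in \{1, -1, -\tfrac12, \tfrac12\}$, while every non-squarefree divisor contributes $0$. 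Hence each $\mu_j \in \{0, \pm\tfrac12, \pm1\}$. These are precisely the rational numbers at which $T_\tau$ can take the value $\pm1$ (by Niven's theorem on rational values of $\cos$ of rational multiples of $\pi$), so condition (ii) is never obstructed on arithmetic grounds and instead becomes a set of congruences on $\tau$ --- for instance $\tau$ must be even once $0$ occurs among the $\mu_j$ and $3 \mid \tau$ once $\pm\tfrac12$ occurs.

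The heart of the matter is condition (iii). Since $T_\tau(\mu_j) \in \{\pm1\}$, the sequence $T_\tau(\mu_0), \dots, T_\tau(\mu_{n-1})$ must alternate strictly, so $\mu_j = \mu_{j+1}$ for a single index $j$ already destroys PST. I would exploit the two smallest indices: $\mu_1 = \mu(n)/\varphi(n)$ since $\gcd(1,n) = 1$, and $\mu_2 = \mu(n/2)/\varphi(n/2)$ since $\gcd(2,n) = 2$ for even $n$. Both vanish exactly when $n$ and $n/2 = 2^{\alpha-1} 3^\beta$ are non-squarefree, that is, when $\alpha \geq 3$ or $\beta \geq 2$. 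For every such $n$ one gets $\mu_1 = \mu_2 = 0$, whence $T_\tau(\mu_1) = T_\tau(\mu_2)$ and condition (iii) fails, ruling out PST. The only surviving pairs are $(\alpha,\beta) \in \{(1,0),(2,0),(1,1),(2,1)\}$, i.e. $n \in \{2,4,6,12\}$, which are $K_2, C_4, C_6$ and $\uc(12)$.

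It then remains to confirm that each of these four graphs really does admit PST, which I would do by exhibiting an explicit $\tau$: writing out the eigenvalue sequence $\mu_0, \dots, \mu_{n-1}$ and checking that $\tau = 1, 2, 3, 6$ (respectively) makes every $T_\tau(\mu_j) = \pm1$ with strict alternation, taking $v = u + n/2$ for condition (i). I expect the main obstacle to be bookkeeping rather than depth: correctly evaluating the Ramanujan-sum eigenvalues and verifying that the alternation in (iii) genuinely holds for the four small graphs, since (iii) is sensitive to the ordering of the $\mu_j$ and not merely to the multiset of eigenvalues. Once the eigenvalue formula $\mu_j = \mu(n/\gcd(j,n))/\varphi(n/\gcd(j,n))$ is in place, the conceptual crux --- that $\mu_1 = \mu_2 = 0$ the instant $n$ leaves the list $\{2,4,6,12\}$ --- is short.
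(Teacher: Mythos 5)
Your proposal is correct and takes essentially the same route as the paper: reduce to $n=2^\alpha 3^\beta$ via vertex-transitivity and periodicity, force $n$ even via condition (i) of Theorem \ref{thl}, eliminate every $n$ with $\alpha\geq 3$ or $\beta\geq 2$ by noting that the Ramanujan-sum eigenvalues give $\mu_1=\mu_2=0$ so that condition (iii) fails (this is precisely the paper's Corollary \ref{thl1} applied to $\lambda_1=\lambda_2=0$), and confirm perfect state transfer on the four survivors. The only cosmetic difference is that the paper splits the excluded values of $n$ into four subcases and handles $n=2,4,6$ through its lemmas on complete graphs and cycles, pinning down $\tau=6$ for $\uc(12)$ by a short necessity argument, whereas you verify all four survivors uniformly with the explicit times $\tau=1,2,3,6$.
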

	
	The paper is organized as follows. In Section 2, we initially give the definition of a few matrices through which the terms \emph{periodicity} and \emph{perfect state transfer} are defined. We also express the eigenvalues of unitary Cayley graphs in terms of Ramanujan sums. In Section 3, we first provide a necessary and sufficient condition for an integral regular graph to be periodic. Using the Ramanujan sums, we classify the periodic unitary Cayley graphs. In Section 4, we prove that periodicity is a necessary condition for the occurrence of perfect state transfer in a vertex-transitive graph. Also, we determine a necessary and sufficient condition for exhibiting perfect state transfer on circulant graphs. We characterize complete graphs and cycles exhibiting perfect state transfer. Finally, we conclude that only four graphs in the class of unitary Cayley graphs exhibit perfect state transfer. In Section 5, we characterize integral regular graphs that are periodic.

	\section{Preliminaries}\label{prel}

	Let $G:=(V(G),E(G))$ be a finite, simple and connected graph with vertex set $V(G)$ and edge set $E(G)$. Note that $G$ is a graph without loops, parallel edges and have no orientations. We write the elements of $E(G)$ as $uv$, where $u,v\in V$, $u\neq v$ and, $uv$ and $vu$ represent the same edge. That is, $E(G)\subseteq \{uv: u,v\in V(G), u\neq v\}$ with the convention that $uv=vu$. Let $A:=A(G)\in \Cl^{V(G)\times V(G)}$ be the \emph{adjacency matrix} of $G$, where
	$$A_{uv} = \left\{ \begin{array}{rl}
		1 &\mbox{ if }
		uv\in E \\ 
		0 &\textnormal{ otherwise.}
	\end{array}\right.$$  
	Apart from the adjacency matrix, we define a few more matrices associated to a graph in the next sub-section.  The size of these matrices is clear from their context. The graphs $K_n$ and $C_n$ are the complete graph and the cycle on $n$ vertices. Similarly, $K_{m,m}$ and $K_{m,m,m}$ are the complete $m$-regular bipartite graph on $2m$ vertices and the complete $2m$-regular tripartite graph on $3m$ vertices, respectively.  The matrices $J$ and $I$ denote the all-one matrix and the identity matrix, respectively.

	\subsection{Grover walks}
	If $uv$ is an edge of a graph $G$, then the ordered pairs $(u,v)$ and $(v,u)$ are called the \emph{arcs} of $G$ associated to the edge $uv$. We define $\mathcal{A}=\{(u, v), (v, u):uv\in E(G) \}$, the set of all symmetric arcs of $G$. The vertices $u$ and $v$ are called the \emph{origin} and \emph{terminus} of an arc $(u,v)$, respectively. Let $a$ be an arc of $G$, where $a=(u,v)$. We write $o(a)$ and $t(a)$ to denote the origin and terminus of $a$, respectively, that is, $o(a)=u$ and $t(a)=v$.  The inverse arc of $a$, denoted $a^{-1}$, is the arc  $(v,u)$.
	
	We now define a few matrices required for the definition of Grover walks. The \emph{boundary matrix} $d:=d(G)\in \mathbb{C}^{V(G)\times \mathcal{A}}$ of $G$ is defined by $$d_{xa}=\frac{1}{\sqrt{\deg x}}\delta _{x, t(a)},$$ where $\delta_{a,b}$ is the Kronecker delta function. The \emph{shift matrix} $S:=S(G)\in \mathbb{C}^{\mathcal{A} \times \mathcal{A}}$ of $G$ is defined by $$S_{ab}=\delta_{a,b^{-1}}.$$ Define the \emph{time evolution matrix} $U:=U(G)\in \mathbb{C}^{\mathcal{A}\times \mathcal{A}}$ of $G$ by $$U=S(2d^*d-I).$$ The time evolution matrix is also known as the Grover transition matrix. A discrete-time quantum walk on a graph G is determined by a unitary matrix, which acts on the complex functions of the symmetric arcs of G. The discrete-time quantum walks defined by $U$ are called the \emph{Grover walks}. The entries of the time evolution matrix are calculated as $$U_{ab}=\frac{2}{\deg t(b)}\delta_{o(a),t(b)}-\delta_{a,b^{-1}}.$$ The \emph{discriminant} $P:=P(G)\in \mathbb{C}^{V(G)\times V(G)}$ of $G$ is defined by $$P=dSd^*.$$
	See \cite{mixedpaths} for more details about the matrices $d,~S,~U$ and $P$. If  $G$ is a regular graph, then the matrix $P$ can be expressed in terms of its adjacency matrix $A$.
	\begin{lema}\cite{qq}  \label{reg}
		If $G$ is a $k$-regular graph, then $P=\frac{1}{k}A$. Further, the modulus of eigenvalues of $P$ is less than or equal to $1$.
	\end{lema}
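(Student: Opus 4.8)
The plan is to prove the identity $P=\frac{1}{k}A$ by a direct entrywise computation of the matrix product $P=dSd^*$, and then to read off the spectral bound from the standard eigenvalue bounds for regular graphs. First I would fix two vertices $x,y\in V(G)$ and expand
$$P_{xy}=(dSd^*)_{xy}=\sum_{a,b\in\mathcal{A}} d_{xa}\,S_{ab}\,\overline{d_{yb}}.$$
Substituting the definitions $d_{xa}=\frac{1}{\sqrt{\deg x}}\delta_{x,t(a)}$, $S_{ab}=\delta_{a,b^{-1}}$ and $\overline{d_{yb}}=\frac{1}{\sqrt{\deg y}}\delta_{y,t(b)}$ (the entries of $d$ are real), the factor $\delta_{a,b^{-1}}$ collapses the double sum to a single sum over arcs $b$, yielding
$$P_{xy}=\frac{1}{\sqrt{\deg x\,\deg y}}\sum_{b\in\mathcal{A}}\delta_{x,t(b^{-1})}\,\delta_{y,t(b)}.$$

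The key observation is that $t(b^{-1})=o(b)$, so the summand becomes $\delta_{x,o(b)}\,\delta_{y,t(b)}$, which is nonzero precisely for the unique arc $b=(x,y)$ when $xy\in E(G)$. Since $G$ is simple, each ordered pair of distinct vertices determines at most one arc, so the sum equals $A_{xy}$, giving $P_{xy}=\frac{1}{\sqrt{\deg x\,\deg y}}A_{xy}$. Specializing to a $k$-regular graph, where $\deg x=\deg y=k$ for all vertices, produces $P=\frac{1}{k}A$.

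For the spectral claim I would invoke the standard fact that every adjacency eigenvalue $\lambda$ of a $k$-regular graph satisfies $|\lambda|\le k$: the matrix $A$ is real symmetric, so its spectral radius equals its largest eigenvalue in absolute value, which is bounded above by the maximum row sum $k$. Since $P=\frac{1}{k}A$, the eigenvalues of $P$ are exactly $\lambda/k$ as $\lambda$ ranges over the spectrum of $A$, and hence each has modulus at most $1$. I do not expect a serious obstacle; the only point requiring care is the arc bookkeeping behind the identity $t(b^{-1})=o(b)$ together with simplicity of $G$, which is exactly what converts the sum over arcs into the adjacency entry $A_{xy}$.
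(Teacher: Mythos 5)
The paper does not actually prove this lemma: it is imported verbatim from the cited reference \cite{qq}, so there is no internal proof to compare against. Your argument is correct and self-contained, and it is the standard one. The entrywise expansion of $P=dSd^*$ is handled properly: the factor $S_{ab}=\delta_{a,b^{-1}}$ collapses the double sum, the identity $t(b^{-1})=o(b)$ turns the summand into $\delta_{x,o(b)}\delta_{y,t(b)}$, and simplicity of $G$ guarantees at most one arc $(x,y)$ per ordered pair, so the sum is exactly $A_{xy}$. In fact your intermediate formula $P_{xy}=A_{xy}/\sqrt{\deg x\,\deg y}$ proves more than the lemma asks: it identifies the discriminant of an arbitrary graph with the degree-normalized adjacency matrix $D^{-1/2}AD^{-1/2}$, from which the regular case is the specialization $\deg x\equiv k$. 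The spectral bound is also fine: for a real symmetric matrix the spectral radius is at most the maximum absolute row sum, which is $k$ for a $k$-regular graph, so every eigenvalue of $P=\frac{1}{k}A$ has modulus at most $1$. The only thing you gain by knowing the literature is that the bound $|\mu|\le 1$ holds for the discriminant of any graph (regular or not), since $D^{-1/2}AD^{-1/2}$ is similar to the random-walk transition matrix $D^{-1}A$; but for the statement as given, your row-sum argument suffices.
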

	\begin{defi}
		A graph $G$ is \emph{periodic} if $U^\tau =I$ for some positive integer $\tau$ and $U^j \neq I$ for every $j$ with $0< j < \tau $. In such a case, the number $\tau $ is the \emph{period} of the graph, and the graph is \emph{$\tau$-periodic}.
	\end{defi}	
	We define $\spec_A(G),~ \spec_P(G)$ and $\spec_U(G)$ to be the set of all distinct eigenvalues of $A,~P$ and $U$ of the graph $G$, respectively.  The set $\spec_A(G)$ is also known as the \emph{adjacency spectrum} of $G$. A graph $G$ is said to be \emph{integral} if all the eigenvalues of its adjacency matrix are integers.

	Since $U$ is a unitary matrix, it is diagonalizable. Therefore the periodicity of a graph can be easily determined by the eigenvalues of its time evolution matrix.
	\begin{lema}\cite{mixedpaths} \label{period}
		A graph $G$ is a $\tau$-periodic graph if and only if $ \lambda^\tau  =1$ for every $\lambda \in \spec_U(G)$, and for each $j\in\{1,\hdots,\tau-1\}$, $\lambda^j\neq 1$ for for some $\ld\in\spec_U(G)$.
	\end{lema}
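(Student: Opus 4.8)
The plan is to exploit the fact, already noted just before the statement, that $U$ is a unitary matrix and is therefore diagonalizable over $\Cl$. I would write $U = QDQ^{-1}$, where $Q$ is invertible (indeed unitary) and $D$ is the diagonal matrix whose diagonal entries are the eigenvalues of $U$ listed with multiplicity. Since conjugation by $Q$ commutes with taking powers, one has $U^k = QD^kQ^{-1}$ for every positive integer $k$, and $D^k$ is again diagonal, with entries $\lambda^k$ for each eigenvalue $\lambda$ appearing on the diagonal of $D$.

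The first reduction is to characterize the relation $U^k = I$ spectrally. Because $Q$ is invertible, $U^k = I$ holds if and only if $D^k = Q^{-1} I Q = I$, which in turn holds if and only if $\lambda^k = 1$ for every diagonal entry $\lambda$ of $D$. As the condition $\lambda^k = 1$ depends only on the value of the eigenvalue and not on its multiplicity, this is equivalent to requiring $\lambda^k = 1$ for every $\lambda \in \spec_U(G)$. Thus, for each positive integer $k$,
\[
U^k = I \quad\Longleftrightarrow\quad \lambda^k = 1 \text{ for all } \lambda \in \spec_U(G).
\]

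Finally I would translate the definition of $\tau$-periodicity. By the Definition, $G$ is $\tau$-periodic precisely when $\tau$ is the least positive integer with $U^\tau = I$, that is, $U^\tau = I$ together with $U^j \neq I$ for all $1 \le j \le \tau - 1$. Applying the spectral equivalence above with $k = \tau$ yields the first asserted condition, namely $\lambda^\tau = 1$ for every $\lambda \in \spec_U(G)$. Applying it in contrapositive form to each $k = j$ with $1 \le j \le \tau - 1$ shows that $U^j \neq I$ is equivalent to the existence of some $\ld \in \spec_U(G)$ with $\ld^j \neq 1$, which is exactly the second asserted condition. Since every step is an equivalence, this establishes both directions of the lemma at once.

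This argument is essentially routine linear algebra, so I do not expect a serious obstacle; the only point that needs a moment of care is the passage from the eigenvalues counted with multiplicity to the finite set of distinct eigenvalues $\spec_U(G)$, which is justified because $\lambda^k = 1$ is a property of the eigenvalue alone. Note also that unitarity of $U$ is used solely to guarantee diagonalizability, while the stronger fact that its eigenvalues lie on the unit circle plays no role in the equivalence itself.
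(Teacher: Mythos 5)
Your proof is correct, and it follows exactly the route the paper intends: the paper does not prove this lemma itself (it cites it from the mixed-paths reference), but its one-line justification --- that $U$ is unitary, hence diagonalizable, so periodicity is determined by the eigenvalues of $U$ --- is precisely the observation your argument expands into a full proof. Nothing further is needed.
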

	From the previous lemma, we observe that the period of G can be determined from the eigenvalues of $U$.
	\begin{corollary} \label{periodic}
		Let $\ld_1, \hdots , \ld_t$ be the distinct eigenvalues of the time evolution matrix of a periodic graph $G$. Let $k_1, \hdots, k_t$ be the least positive integers such that $\ld_1 ^{k_1}=1, \hdots,\ld_t^{k_t}=1$. Then the period of $G$ is $\lcm(k_1, \hdots, k_t)$.
	\end{corollary}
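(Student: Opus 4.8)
The plan is to reduce the claim to the characterization of periodicity supplied by Lemma \ref{period}. That lemma says precisely that the period $\tau$ is the smallest positive integer for which $\ld^\tau = 1$ holds simultaneously for every $\ld \in \spec_U(G)$; equivalently, since $U$ is unitary and hence diagonalizable, $\tau$ is the least positive integer with $U^\tau = I$. Thus it suffices to show that $m := \lcm(k_1,\hdots,k_t)$ is exactly this minimal common exponent. Note first that periodicity guarantees each $\ld_i$ is a root of unity, so the orders $k_i$ (the least positive integers with $\ld_i^{k_i}=1$) are well defined.

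First I would check that $m$ is a valid exponent. Since $k_i \divides m$ for each $i$, writing $m = k_i (m/k_i)$ gives $\ld_i^{m} = (\ld_i^{k_i})^{m/k_i} = 1$ for every $i$. Hence $\ld^m = 1$ for all $\ld \in \spec_U(G)$, which by Lemma \ref{period} forces $\tau \divides m$, and in particular $\tau \le m$.

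For the reverse inequality I would argue minimality. Suppose $n$ is any positive integer with $\ld_i^n = 1$ for all $i$. Because $k_i$ is the multiplicative order of $\ld_i$, the standard division argument (write $n = q k_i + r$ with $0 \le r < k_i$; then $\ld_i^r = 1$ forces $r = 0$) yields $k_i \divides n$ for every $i$, whence $m = \lcm(k_1,\hdots,k_t) \divides n$ and so $m \le n$. Applying this with $n = \tau$, which satisfies $\ld_i^\tau = 1$ for all $i$ by the definition of the period, gives $m \le \tau$. Combining the two inequalities yields $\tau = m = \lcm(k_1,\hdots,k_t)$, as claimed.

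Since every step is a direct consequence of Lemma \ref{period} together with elementary facts about multiplicative orders and least common multiples, I do not anticipate any genuine obstacle; the only point requiring care is recording that periodicity is what guarantees the existence of each finite order $k_i$ in the first place.
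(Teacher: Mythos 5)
Your proof is correct and follows essentially the same route as the paper: the paper states this corollary as an immediate consequence of Lemma \ref{period} (offering no further written proof), and your two-sided argument via multiplicative orders and the least common multiple is exactly the justification being left implicit. No gaps to report.
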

	The following result is known as the spectral mapping theorem of the Grover walks. For a non-negative integer $k$, by $\{a\}^k$, we denote the multi-set $\{a,\ldots,a\}$, where the element $a$ repeats $k$-times. We also denote $\sqrt{-1}$ by $\iu$.
	\begin{lema} \label{evu}  \cite{higu}
		Let $\mu_1, \hdots,\mu_n$ be the eigenvalues of the discriminant of a graph $G$. Then the multi-set of eigenvalues of the time evolution matrix is $$\left\{e^{\pm \iu \arccos(\mu_j)}:j\in\{1,\hdots,n\}\right\}\cup \{1\}^{b_1} \cup \{ -1\}^{b_1-1+1_B},$$ where $b_1=|E(G)|-|V(G)|+1$, and $1_B=1$ or $0$ according as $G$ is bipartite or not.	
	\end{lema}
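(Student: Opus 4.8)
The plan is to exploit the factorisation $U=SC$ with $C=2d^*d-I$ and to transfer the spectral problem on the arc space $\Cl^{\mathcal A}$ (dimension $2|E(G)|$) down to the vertex space $\Cl^{V(G)}$ carrying the discriminant $P$. First I would record the basic identities: $dd^*=I$ (each vertex $x$ is the terminus of exactly $\deg x$ arcs, so the $1/\sqrt{\deg x}$ normalisation makes the rows of $d$ orthonormal), together with $S^*=S$, $S^2=I$ and $P^*=P$. Since $\|d^*\|=\|S\|=1$, these give $\|P\|\le 1$, so $P$ is self-adjoint with spectrum in $[-1,1]$ and each $\arccos(\mu_j)$ is a well-defined real number (consistent with Lemma \ref{reg}). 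Observe also that $C$ is the reflection acting as $+I$ on $\mathcal L:=\mathrm{im}(d^*)$ and as $-I$ on $\mathcal L^{\perp}=\ker d$.

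The engine of the argument is the pair of operator identities $Ud^*=Sd^*$ and $USd^*=2Sd^*P-d^*$, both of which follow by a one-line computation from $U=S(2d^*d-I)$, $dd^*=I$ and $P=dSd^*$. Hence $\mathcal W:=\mathcal L+S\mathcal L$ is $U$-invariant. Taking an orthonormal eigenbasis $\phi_1,\dots,\phi_n$ of $P$ with $P\phi_j=\mu_j\phi_j$, the vectors $d^*\phi_j$ and $Sd^*\phi_j$ have Gram matrix $\left(\begin{smallmatrix}1&\mu_j\\ \mu_j&1\end{smallmatrix}\right)$, and the planes $V_j=\mathrm{span}(d^*\phi_j,Sd^*\phi_j)$ are mutually orthogonal and $U$-invariant. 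When $\mu_j\in(-1,1)$ the plane is two-dimensional and $U|_{V_j}$ is represented by $\left(\begin{smallmatrix}0&-1\\ 1&2\mu_j\end{smallmatrix}\right)$, whose characteristic polynomial $\lambda^2-2\mu_j\lambda+1$ yields precisely the pair $e^{\pm\iu\arccos(\mu_j)}$. When $\mu_j=1$ (resp. $\mu_j=-1$) one checks $Sd^*\phi_j=d^*\phi_j$ (resp. $=-d^*\phi_j$), so $V_j$ collapses to a line on which $U$ acts as $+1$ (resp. $-1$); for a connected graph this occurs with multiplicities $m_+=1$ and $m_-=1_B$. This accounts for all of $\spec_U(G)$ restricted to $\mathcal W$.

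It remains to analyse $U$ on $\mathcal W^{\perp}=\ker d\cap\ker(dS)$. For $\psi$ there, $d\psi=0$ forces $U\psi=-S\psi$, so on $\mathcal W^{\perp}$ one simply has $U=-S$ with $S^2=I$; thus the $U$-eigenvalue $+1$ comes from the $S$-antisymmetric part and $-1$ from the $S$-symmetric part (and $S\psi=\pm\psi$ makes $\ker(dS)=\ker d$ automatic there). I would then identify these pieces combinatorially: antisymmetric vectors are complex $1$-chains, $d\psi=0$ is the zero-divergence condition, so the antisymmetric part is the cycle space of $G$, of dimension $b_1=|E(G)|-|V(G)|+1$, producing the $\{1\}^{b_1}$. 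Symmetric vectors are functions on edges, and $d\psi=0$ is exactly membership in the kernel of the unsigned vertex–edge incidence matrix, whose rank for a connected graph is $|V(G)|-1_B$; hence the symmetric part has dimension $|E(G)|-|V(G)|+1_B=b_1-1+1_B$, producing the $\{-1\}^{b_1-1+1_B}$.

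The hard part is this last paragraph: getting the $\pm1$ multiplicities exactly right. The delicate point is to separate cleanly the $\pm1$'s already emitted by the eigenvalues $\mu_j=\pm1$ of $P$ (the inherited copies, with multiplicities $m_+=1$ and $m_-=1_B$ by connectivity and bipartiteness) from the birth copies living on $\mathcal W^{\perp}$; note that the indicator $1_B$ enters in two distinct roles, through $m_-$ on $\mathcal W$ and through the incidence rank on $\mathcal W^{\perp}$. As a final consistency check I would verify the dimension count $\dim\mathcal W+\dim\mathcal W^{\perp}=2|E(G)|$: here $\dim\mathcal W=2|V(G)|-m_+-m_-$ and $\dim\mathcal W^{\perp}=(|E(G)|-|V(G)|+1)+(|E(G)|-|V(G)|+1_B)$, and their sum equals $2|E(G)|$ exactly when $m_++m_-=1+1_B$, which closes the bookkeeping.
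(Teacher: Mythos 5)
The paper gives no proof of this lemma at all --- it is imported verbatim from \cite{higu} --- so there is nothing internal to compare against; what you have written is a correct, self-contained proof of a result the authors only cite, and it follows what is essentially the standard spectral-mapping argument of that reference. Your key identities all check out: $dd^*=I$, hence $d^*d$ is the orthogonal projection onto $\mathrm{im}(d^*)$; $Ud^*=Sd^*$ and $USd^*=2Sd^*P-d^*$ follow in one line from $U=S(2d^*d-I)$ and $P=dSd^*$; the Gram matrix of $(d^*\phi_j,\,Sd^*\phi_j)$ is $\left(\begin{smallmatrix}1&\mu_j\\ \mu_j&1\end{smallmatrix}\right)$, so for $\mu_j\in(-1,1)$ the plane is nondegenerate and $U$ acts on it with characteristic polynomial $\lambda^2-2\mu_j\lambda+1$, giving the pair $e^{\pm\iu\arccos(\mu_j)}$; and on $\ker d\cap\ker(dS)$ one has $U=-S$, so the birth eigenvalues $+1$ and $-1$ have multiplicities equal to the dimensions of the cycle space ($b_1=|E(G)|-|V(G)|+1$) and of the kernel of the unsigned incidence matrix ($|E(G)|-|V(G)|+1_B=b_1-1+1_B$), using the ranks $|V(G)|-1$ and $|V(G)|-1_B$ valid for a connected graph.

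Two remarks. First, your treatment of $\mu_j=\pm1$ --- where $\|Sd^*\phi_j\mp d^*\phi_j\|^2=2\mp2\mu_j=0$ forces the plane $V_j$ to collapse to a line carrying a single eigenvalue --- is not a peripheral detail but the point that makes the statement parse correctly: read literally, with every $j$ contributing two entries $e^{\pm\iu\arccos(\mu_j)}$, the displayed multiset would have $2|E(G)|+1+1_B$ elements, exceeding $\dim\Cl^{\mathcal{A}}=2|E(G)|$; the degenerate pairs must be counted once, exactly as your argument produces them. Second, your closing consistency check, that the dimensions close up precisely because the multiplicities of $+1$ and $-1$ in $\spec_P(G)$ are $1$ and $1_B$ for a connected graph (Perron--Frobenius plus bipartiteness), is the correct and complete bookkeeping; it implicitly uses the standing assumption of Section \ref{prel} that $G$ is connected, without which both the lemma and the multiplicity claims would need adjusting.
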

	

	For $\Phi,\Uppsi\in \Cl^{\mathcal{A}}$, we denote by $\ip{\Phi}{\Uppsi}$ the Euclidean inner product of $\Phi$ and $\Uppsi$. Let $G$ be a graph and $U$ be its time evolution matrix. A vector $\Phi\in \Cl^\mathcal{A}$ is said to be a \emph{state} if $\ip{\Phi}{\Phi}=1$. We say that perfect state transfer occurs from a state $\Phi$ to another state $\Psi$ at time $\tau\in \Nl$ if there exists a unimodular complex number $\gamma$ such that $$U^\tau\Phi=\gamma \Psi.$$ 
	\begin{lema} \cite{pgstdqw}\label{st11}
		Let $G$ be a graph and $U$ be its time evolution matrix. Then perfect state transfer occurs from a state $\Phi$ to another state $\Psi$ at time $\tau$ if and only if  $|\ip{U^\tau \Phi}{\Uppsi}|=1.$ 
	\end{lema}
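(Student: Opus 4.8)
The plan is to prove both implications by exploiting just two structural facts: that $U$ is unitary (so $U^\tau$ preserves the Euclidean norm) and that $\Phi$ and $\Psi$ are states (unit vectors). The whole statement is then an instance of the Cauchy--Schwarz inequality together with its equality condition, so no special features of the Grover walk beyond unitarity of $U$ are needed.

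For the forward direction, I would assume perfect state transfer occurs, i.e.\ $U^\tau \Phi = \gamma \Psi$ for some unimodular $\gamma$, and simply substitute into the inner product. Using linearity and $\ip{\Psi}{\Psi}=1$, one gets $\ip{U^\tau\Phi}{\Psi}=\gamma\,\ip{\Psi}{\Psi}=\gamma$ (up to a conjugate on $\gamma$ depending on the chosen convention for $\ip{\cdot}{\cdot}$), whence $|\ip{U^\tau\Phi}{\Psi}| = |\gamma| = 1$. This direction is immediate.

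For the converse, I would start from $|\ip{U^\tau\Phi}{\Psi}| = 1$ and apply Cauchy--Schwarz:
\[
1 = |\ip{U^\tau\Phi}{\Psi}| \le \|U^\tau\Phi\|\,\|\Psi\|.
\]
Here I would invoke that $U$ is unitary to get $\|U^\tau\Phi\| = \|\Phi\| = 1$, and that $\Psi$ is a state to get $\|\Psi\| = 1$, so the right-hand side equals $1$ and the inequality is in fact an equality. The crucial step is the equality case of Cauchy--Schwarz: equality forces $U^\tau\Phi$ and $\Psi$ to be linearly dependent, so $U^\tau\Phi = \gamma\Psi$ for some scalar $\gamma$ (this uses $\Psi\neq 0$, which holds since $\|\Psi\|=1$). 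Taking norms of both sides and using $\|U^\tau\Phi\|=\|\Psi\|=1$ gives $|\gamma|=1$, so $\gamma$ is unimodular and perfect state transfer occurs by definition.

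The only point requiring care is the equality case of Cauchy--Schwarz and the verification that the resulting proportionality constant is unimodular; everything else is a one-line substitution. Since unitarity of $U$ guarantees that $U^\tau\Phi$ remains a state, the argument is essentially the standard observation that two unit vectors with inner product of modulus $1$ differ only by a phase, and I expect no genuine obstacle beyond stating the equality condition correctly.
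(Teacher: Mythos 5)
Your proof is correct: the forward direction by direct substitution, and the converse via unitarity of $U$ plus the equality case of Cauchy--Schwarz, is exactly the standard argument for this fact. The paper itself gives no proof of this lemma (it is quoted from Chan and Zhan's work on pretty good state transfer), and your argument is the same one used there, so there is nothing to add.
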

	Let $G$ be a graph with vertex set $V(G)$. A state $\Phi\in \Cl^\mathcal{A}$ is said to be \emph{vertex type} of $G$ if there exists a vertex $u\in V(G)$ such that $\Phi=d^* \eu$, where $\eu$ is the unit vector defined by $(\eu)_x=\delta_{u,x}$. We denote by $\chi$ the set of all vertex type states, that is, $\chi =\{d^*\eu: u\in V(G)\}$. In this paper, we deal only with the vertex type states.
	\begin{defi}
		A graph exhibits \emph{perfect state transfer} if perfect state transfer occurs between two distinct vertex type states of the graph.
	\end{defi}
	From now onward, we say that  perfect state transfer occurs from the vertex $u$ to another vertex $v$ of a graph to mean that perfect state transfer occurs from the state $d^*\eu$ to the state $d^*\ev$ of the graph.
	In Grover walks, the occurrence of perfect state transfer between vertex type states has strong connection with Chebyshev polynomials.

	The \emph{Chebyshev polynomial of the first kind}, denoted $T_n(x)$, is the polynomial defined by $T_0(x)=1$, $T_1(x)=x$ and $$T_n(x)=2xT_{n-1}(x)-T_{n-2}(x) ~\text{for}~ n \geq 2.$$ It is well known that 
	\begin{equation}\label{chb}
		T_n(\cos\theta)=\cos(n\theta).
	\end{equation}
	This implies that $|T_n(x)|\leq 1$ for $|x|\leq 1$. Thus, the next lemma follows easily.
	\begin{lema}\label{ch}
		Let $\mu\in[-1,1]$ and $\tau$ be any positive integer. Then
		\begin{enumerate}[label=(\roman*)]
			\item $|T_\tau (\mu)|\leq 1$.
			\item $T_\tau (\mu)= 1$ if and only if $\mu =\cos\frac{s}{\tau}\pi$ for some even positive integer $s$.
			\item $T_\tau (\mu)= -1$ if and only if $\mu =\cos\frac{s}{\tau}\pi$ for some odd positive integer $s$.
		\end{enumerate}
	\end{lema}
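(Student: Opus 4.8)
The plan is to reduce all three claims to the identity (\ref{chb}) through the substitution $\mu=\cos\theta$, where $\theta=\arccos\mu\in[0,\pi]$; this is legitimate exactly because $\mu\in[-1,1]$. Under it, (\ref{chb}) gives $T_\tau(\mu)=T_\tau(\cos\theta)=\cos(\tau\theta)$, so each assertion turns into a statement about the values of $\cos(\tau\theta)$. Part (i) is then immediate: since $|\cos|\le 1$ everywhere, $|T_\tau(\mu)|=|\cos(\tau\theta)|\le 1$, which is the remark already recorded after (\ref{chb}).

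For the ``if'' directions of (ii) and (iii) I would argue directly, without even needing $\theta\in[0,\pi]$. If $\mu=\cos\frac{s}{\tau}\pi$ for an integer $s$, then (\ref{chb}) yields $T_\tau(\mu)=\cos\!\bigl(\tau\cdot\frac{s}{\tau}\pi\bigr)=\cos(s\pi)=(-1)^s$, which equals $1$ when $s$ is even and $-1$ when $s$ is odd. Thus an even positive $s$ forces $T_\tau(\mu)=1$ and an odd positive $s$ forces $T_\tau(\mu)=-1$, giving one implication of each part at once.

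For the ``only if'' directions I would use that $\cos(\tau\theta)=1$ holds precisely when $\tau\theta$ is an even multiple of $\pi$, and $\cos(\tau\theta)=-1$ precisely when it is an odd multiple of $\pi$. Writing $\tau\theta=s\pi$ accordingly produces $\mu=\cos\frac{s}{\tau}\pi$ with $s$ even (for (ii)) or odd (for (iii)); since $\theta\in[0,\pi]$ we have $s=\tau\theta/\pi\ge 0$. The only point needing care --- and the sole ``obstacle,'' such as it is --- is the positivity of $s$: the degenerate case $\theta=0$, i.e.\ $\mu=1$ in part (ii), gives $s=0$, which I would replace by $s=2\tau$ using the $2\pi$-periodicity of cosine, since $\cos\frac{2\tau}{\tau}\pi=\cos 2\pi=1=\mu$ and $2\tau$ is an even positive integer. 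In part (iii) the relevant values are $s=2k+1\ge 1$, already positive, so no adjustment is required. Otherwise the lemma is a direct transcription of (\ref{chb}), and the remaining work is the routine bookkeeping that the admissible $s$ are exactly the even (resp.\ odd) positive integers.
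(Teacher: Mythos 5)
Your proof is correct and takes essentially the same approach as the paper, which states the lemma as an easy consequence of the identity $T_\tau(\cos\theta)=\cos(\tau\theta)$ in \eqref{chb} without writing out the details. Your substitution $\mu=\cos\theta$, $\theta\in[0,\pi]$, is exactly that intended argument, and your handling of the degenerate case $\mu=1$ (replacing $s=0$ by the even positive integer $s=2\tau$) correctly fills in the one detail the paper leaves implicit.
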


	In \cite{pstdc}, Kubota and Segawa studied perfect state transfer between vertex type states via Chebyshev polynomial of the first kind. They gave a necessary condition on the eigenvalues of a graph for perfect state transfer between vertex-type states to occur.
	\begin{lema} \cite{pstdc}\label{ch11}
		Let $T_n(x)$ be the Chebyshev polynomial of the first kind.	Let $G$ be a graph with the time evolution matrix $U$ and discriminant $P$. Then $dU^\tau d^* = T_\tau (P)$ for $\tau \in \Nl \cup \{0\}$. 
	\end{lema}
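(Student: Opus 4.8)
The plan is to prove the identity $dU^\tau d^* = T_\tau(P)$ by induction on $\tau$, mirroring the three-term recurrence $T_\tau(x) = 2x\,T_{\tau-1}(x) - T_{\tau-2}(x)$ that defines the Chebyshev polynomials. Before starting the induction I would record two elementary algebraic facts about $d$ and $S$. First, the normalization $d_{xa} = \frac{1}{\sqrt{\deg x}}\delta_{x,t(a)}$ gives $dd^* = I$: the $(x,x)$ entry equals $\frac{1}{\deg x}$ times the number of arcs with terminus $x$, which is exactly $\deg x$, while off-diagonal entries vanish. Second, since $S$ swaps each arc with its inverse, it is a symmetric involution, so $S^2 = I$. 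These two facts, together with the definitions $U = S(2d^*d - I)$ and $P = dSd^*$, are all the ingredients needed.

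For the base cases, $\tau = 0$ gives $dU^0 d^* = dd^* = I = T_0(P)$, and $\tau = 1$ gives
\[
dUd^* = dS(2d^*d - I)d^* = 2(dSd^*)(dd^*) - dSd^* = 2P - P = P = T_1(P),
\]
using $dd^* = I$ and $P = dSd^*$.

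For the inductive step I would first derive two one-step reduction formulas. Left-multiplying $U$ by $d$ and using $dSd^* = P$ gives $dU = 2Pd - dS$; multiplying $U$ on the left by $S$ and using $S^2 = I$ gives $SU = 2d^*d - I$. Applying the first formula to $dU^\tau d^* = (dU)U^{\tau-1}d^*$ yields
\[
dU^\tau d^* = 2P\,(dU^{\tau-1}d^*) - dSU^{\tau-1}d^*.
\]
Comparing with the Chebyshev recurrence, the whole proof reduces to the single claim $dSU^{\tau-1}d^* = dU^{\tau-2}d^*$ for $\tau \geq 2$. This is the crux of the argument, and it is where the second reduction formula enters: writing $SU^{\tau-1} = (SU)U^{\tau-2} = (2d^*d - I)U^{\tau-2}$ and left-multiplying by $d$, the identity $dd^* = I$ collapses $2dd^*d - d$ to $d$, so that $dSU^{\tau-1} = dU^{\tau-2}$ and hence $dSU^{\tau-1}d^* = dU^{\tau-2}d^*$.

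Substituting this back produces exactly $dU^\tau d^* = 2P\,dU^{\tau-1}d^* - dU^{\tau-2}d^*$, whence the induction hypothesis gives $dU^\tau d^* = 2P\,T_{\tau-1}(P) - T_{\tau-2}(P) = T_\tau(P)$. The only real subtlety is keeping track of the two different cancellations, $dd^* = I$ on the vertex side and $S^2 = I$ on the arc side, and recognizing that together they reproduce the Chebyshev recurrence; there is no analytic difficulty beyond this bookkeeping, since $P$ and the matrices $dU^\tau d^*$ all live in $\Cl^{V(G)\times V(G)}$ and the manipulations are purely formal.
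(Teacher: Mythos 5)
Your proof is correct and complete. Note that the paper itself gives no proof of this lemma---it is imported from \cite{pstdc}---so there is no internal argument to compare against; your induction is essentially the standard one from that reference. The two cancellation identities you isolate, $dd^* = I$ (on the vertex side) and $S^2 = I$ (on the arc side), are exactly the right ingredients: they yield $dU = 2Pd - dS$ and, more cleanly, $dSU = d(2d^*d - I) = d$, from which your key claim $dSU^{\tau-1}d^* = dU^{\tau-2}d^*$ is immediate, and the matrix Chebyshev recurrence $dU^\tau d^* = 2P\,dU^{\tau-1}d^* - dU^{\tau-2}d^*$ together with the base cases $\tau = 0,1$ closes the two-step induction. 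One micro-remark: you do not need $S$ to be symmetric anywhere, only that it is an involution.
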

	Since the discriminant $P$ of a graph is a symmetric matrix, it has a spectral decomposition. Suppose $P$ has distinct eigenvalues $\mu_1, \hdots, \mu_m$ and let $E_r$ be the matrix that represents the orthogonal projection of $P$ on the eigenspace associated with $\mu_r$ for $1\leq r\leq m$. Then the spectral decomposition of $P$ is
	\begin{equation*}
		P=\sum_{r=1}^{m} \mu_r E_r. 
	\end{equation*}
	The following properties are satisfied by these eigenprojectors.
	\begin{enumerate}[label=(\roman*)]
		\item $E_r^2=E_r$ for $1\leq r \leq m$,
		\item $E_rE_s=0$ for $r\neq s$, $1\leq r,~s \leq m$,
		\item $E_r^t=E_r$ for $1\leq r \leq m$, and
		\item $E_1+\hdots + E_m=I$.
	\end{enumerate}
	Using the preceding properties of the eigenprojectors, if $f(x)$ is a polynomial, then we have \begin{equation}\label{sd}
		f(P)=\sum_{r=1}^{m} f(\mu_r) E_r.
	\end{equation} 
	For a vertex $u$ of a graph $G$, we define $\Theta_P(u)=\{\mu_r\in \spec_P(G): E_r \eu \neq 0\}.$ The set $\Theta_P(u)$ is called the \emph{eigenvalue support} of the vertex $u$ with respect to $P$.
	\begin{theorem}\cite{pstdc}\label{p1}
		Let  $u$ and $v$ be two distinct vertices of a graph $G$ and $P$ be its discriminant. If perfect state transfer occurs from $u$ to $v$ at time $\tau$, then $T_\tau(\mu)=\pm 1$ for all $\mu \in \Theta_P(u)$.
	\end{theorem}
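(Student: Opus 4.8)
The plan is to translate the perfect state transfer hypothesis into a statement about the discriminant $P$ by way of Lemma \ref{ch11}, and then to read off the conclusion from the equality case of a chain of Cauchy--Schwarz inequalities. First I would apply Lemma \ref{st11} to rewrite the hypothesis as $|\ip{U^\tau d^*\eu}{d^*\ev}|=1$. Since $d$ and $S$, and hence $U$ and $T_\tau(P)$, are real matrices and $T_\tau(P)=dU^\tau d^*$ is symmetric, Lemma \ref{ch11} shows that this inner product equals the real number $\ev^\top T_\tau(P)\eu$. Expanding through the spectral decomposition \eqref{sd} of $P$, and using $E_r^\top=E_r$ together with $E_r^2=E_r$ to rewrite $\ev^\top E_r\eu=\ip{E_r\eu}{E_r\ev}$, I obtain
\begin{equation*}
\ev^\top T_\tau(P)\eu=\sum_{r=1}^{m}T_\tau(\mu_r)\,\ip{E_r\eu}{E_r\ev}.
\end{equation*}

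Next I would bound the modulus of this sum by $1$. Writing $c_r=\ip{E_r\eu}{E_r\ev}$, the triangle inequality followed by $|T_\tau(\mu_r)|\le 1$ (valid since $P$ is a self-adjoint contraction, so each $\mu_r\in[-1,1]$; cf.\ Lemma \ref{ch}(i)) and then a termwise Cauchy--Schwarz gives
\begin{equation*}
\Bigl|\sum_{r}T_\tau(\mu_r)\,c_r\Bigr|\le\sum_{r}|c_r|\le\sum_{r}\|E_r\eu\|\,\|E_r\ev\|.
\end{equation*}
A final Cauchy--Schwarz over the index $r$, combined with $\sum_r E_r=I$ and the fact that $\eu,\ev$ are unit vectors, so that $\sum_r\|E_r\eu\|^2=\eu^\top\eu=1$ and likewise $\sum_r\|E_r\ev\|^2=1$, bounds the right-hand side by $1$.

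Because the hypothesis forces the left-most quantity to equal $1$, every inequality in this chain must be an equality, and the conclusion comes from the equality-case analysis. Equality in the last Cauchy--Schwarz forces proportionality of $(\|E_r\eu\|)_r$ and $(\|E_r\ev\|)_r$, and the common normalization then yields $\|E_r\ev\|=\|E_r\eu\|$ for every $r$; equality in the termwise Cauchy--Schwarz forces $|c_r|=\|E_r\eu\|\,\|E_r\ev\|$ for every $r$. Consequently, whenever $\mu_r\in\Theta_P(u)$, that is $E_r\eu\neq 0$, we get $\|E_r\ev\|=\|E_r\eu\|>0$ and hence $|c_r|=\|E_r\eu\|^2>0$. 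Finally, equality in the step using $|T_\tau(\mu_r)|\le 1$ means $(1-|T_\tau(\mu_r)|)|c_r|=0$ for each $r$, so $|c_r|>0$ forces $|T_\tau(\mu_r)|=1$. Combining these gives $|T_\tau(\mu_r)|=1$, i.e.\ $T_\tau(\mu_r)=\pm 1$, for every $\mu_r\in\Theta_P(u)$, as claimed.

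The main obstacle is precisely this bookkeeping at the end: the inequality chain by itself only yields $|T_\tau(\mu_r)|=1$ on the set where the cross term $c_r$ is nonzero, which a priori could be smaller than the support $\Theta_P(u)$. The decisive point is to first extract, from the sum-level Cauchy--Schwarz equality, that $E_r\eu$ and $E_r\ev$ vanish simultaneously (indeed $\Theta_P(u)=\Theta_P(v)$); this is what upgrades the conclusion from ``cross term nonzero'' to the full eigenvalue support $\Theta_P(u)$.
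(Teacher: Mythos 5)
The paper never proves this statement: Theorem \ref{p1} is quoted from \cite{pstdc} and used as a black box, so there is no internal proof to compare against; your argument must therefore be judged on its own and against the standard argument from the cited literature. Your proof is correct. The identity $|\ip{U^\tau d^*\eu}{d^*\ev}|=|T_\tau(P)_{uv}|$ (Lemmas \ref{st11} and \ref{ch11}), the expansion $T_\tau(P)_{uv}=\sum_r T_\tau(\mu_r)\ip{E_r\eu}{E_r\ev}$, the three-step bound by $1$, and the equality-case analysis are all sound; in particular you correctly isolate the one delicate point, namely that the cross terms $c_r$ could a priori vanish at some $\mu_r\in\Theta_P(u)$, and you close it by extracting $\|E_r\eu\|=\|E_r\ev\|$ for all $r$ from equality in the outer Cauchy--Schwarz, whence $|c_r|=\|E_r\eu\|^2>0$ on the support. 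Two remarks. First, the step $|T_\tau(\mu_r)|\le 1$ requires $\mu_r\in[-1,1]$; the paper proves this only for regular graphs (Lemma \ref{reg}), so for a general graph you should add one line of justification, e.g.\ $dd^*=I$ and $S$ unitary give $\|P\|=\|dSd^*\|\le 1$, and $P$ is real symmetric. Second, there is a shorter route, essentially the one in \cite{pstdc}: the paper's definition of perfect state transfer is the equation $U^\tau d^*\eu=\gamma\, d^*\ev$ itself, so applying $d$ to both sides and using $dd^*=I$ together with Lemma \ref{ch11} gives $T_\tau(P)\eu=\gamma\ev$; taking norms and using the spectral decomposition yields $1=\|T_\tau(P)\eu\|^2=\sum_r T_\tau(\mu_r)^2\|E_r\eu\|^2\le\sum_r\|E_r\eu\|^2=1$, which forces $T_\tau(\mu_r)^2=1$ whenever $E_r\eu\neq 0$. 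That argument avoids all Cauchy--Schwarz equality bookkeeping; yours, in exchange, starts only from the overlap condition $|\ip{U^\tau d^*\eu}{d^*\ev}|=1$ of Lemma \ref{st11} rather than from the equation form, and it yields the additional conclusions $\|E_r\eu\|=\|E_r\ev\|$ for all $r$ and $\Theta_P(u)=\Theta_P(v)$, which are of independent interest.
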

	
	
	\subsection{Unitary Cayley graphs}	
	
	Let $(\Gamma,+)$ be a finite abelian group and $C$ be an inverse closed subset of $\Gamma\setminus\{0\}$, where $0$ is the identity element of $\Gamma$. The \emph{Cayley graph} $\cay(\Gamma, C)$ of $\Gamma$ with respect to $C$ is an undirected graph whose vertex set is $\Gamma$ and two vertices $u$ and $v$ are adjacent if and only if $u-v\in C$. If $\Gamma=\Zl_{n}$, then the Cayley graph is called a \emph{circulant graph}. The Cayley graph $\cay(\Gamma, C)$ is a regular graph of degree $\m C\m $. The graph $\cay(\Gamma, C)$ is connected if $C$ generates $\Gamma$. Now, we are going to describe unitary Cayley graphs.

	Let $\Zl_n$ be the additive group of integers modulo $n$, where $n\geq 2$. We write $\Zl_n=\{0, 1,  ..., n-1\}$ and $U_n=\{a\in\Zl_n:\gcd(a,n)=1\}$. Then the Cayley graph $\cay(\Zl_n, U_n)$ is called the \emph{unitary Cayley graph} on $n$ vertices. We prefer to denote $\cay(\Zl_n,U_n)$ by $\uc(n)$. The graph $\uc(n)$ is a connected and $\varphi(n)$-regular graph, where $\varphi$ denotes the Euler's totient function. Note that the unitary Cayley graph is a circulant graph. It is well known that $\uc(n)$ is an integral graph. For more information about unitary Cayley graphs, see \cite{ucg}.
	
	The eigenvalues and corresponding eigenvectors (see \cite{rep}) of the adjacency matrix of $\cay(\Zl_n,C)$ are given by  \begin{equation}\label{evc}
		\lambda_j=\sum_{s\in C} \omega_n^{js}~~ \text{and}  ~~ \mathbf{v}_j=\left[1~~ \omega_n^j~~ \omega_n^{2j}~~ ...~~ \omega_n^{(n-1)j}\right]^t~~\text{for}~0\leq j\leq n-1,
	\end{equation} where $\omega_n=e^{\frac{2\pi \iu}{n}}$. Note that $\{\omega_n^r:0\leq r\leq n-1\}$ is the set of all (complex) solutions of the equation $x^n=1$.
	
	In 1918, Ramanujan introduced a sum (now known as the Ramanujan sum) in his published seminar paper \cite{ramanujan}, defined by $$ R(j,n)=\sum_{r\in U_n} \omega_n^{jr}=\mathop{\sum_{r\in U_n }}_{r<\frac{n}{2}} 2 \cos\left( \frac{2\pi jr }{n} \right) ~\text{for}~n,j\in \Zl,~ n\geq 1.$$
	\begin{theorem} \cite{ucg} \label{main}
		Let $\ld_0, \ld_1, \hdots, \ld_{n-1}$ be the eigenvalues of the adjacency matrix of the unitary Cayley graph  $\uc(n)$. Then $\lambda_j=R(j,n)~~\text{for}~~ 0\leq j\leq n-1.$
	\end{theorem}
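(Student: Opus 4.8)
The plan is to apply the general eigenvalue formula for circulant Cayley graphs recorded in (\ref{evc}) and then to recognize the resulting expression as the Ramanujan sum by definition. Since $\uc(n)=\cay(\Zl_n,U_n)$, the connection set here is $C=U_n=\{a\in\Zl_n:\gcd(a,n)=1\}$. First I would note that $U_n$ is inverse-closed, because $\gcd(a,n)=1$ if and only if $\gcd(n-a,n)=1$, so $-a\in U_n$ whenever $a\in U_n$; this is what legitimizes treating $\uc(n)$ as a genuine undirected Cayley graph and applying (\ref{evc}) to it.

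Specializing (\ref{evc}) to $C=U_n$ then gives, for each $0\le j\le n-1$,
\begin{equation*}
	\ld_j=\sum_{s\in U_n}\omega_n^{js},
\end{equation*}
with corresponding eigenvector $\mathbf{v}_j=\left[1~~\omega_n^{j}~~\cdots~~\omega_n^{(n-1)j}\right]^t$. Comparing this with the definition $R(j,n)=\sum_{r\in U_n}\omega_n^{jr}$, the two sums coincide verbatim, so $\ld_j=R(j,n)$ for all $j$, which is exactly the claim. If instead I wanted the argument to be self-contained rather than quoting (\ref{evc}), I would verify directly that each $\mathbf{v}_j$ is an eigenvector: computing the $u$-th coordinate of $A\mathbf{v}_j$ and substituting $s=u-v$ yields $(A\mathbf{v}_j)_u=\omega_n^{ju}\sum_{s\in U_n}\omega_n^{-js}$, and the inverse-closedness of $U_n$ lets me replace $\omega_n^{-js}$ by $\omega_n^{js}$ inside the sum, so that $A\mathbf{v}_j=\big(\sum_{s\in U_n}\omega_n^{js}\big)\mathbf{v}_j$. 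Since $\mathbf{v}_0,\hdots,\mathbf{v}_{n-1}$ are the orthogonal columns of the discrete Fourier matrix, they form a complete eigenbasis, so these are precisely all $n$ eigenvalues of $A$.

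There is essentially no hard step in this statement: once (\ref{evc}) is in hand, the identification with $R(j,n)$ is immediate from the definition of the Ramanujan sum. The only point that genuinely needs a line of justification is the inverse-closedness of $U_n$, which simultaneously guarantees that $\uc(n)$ is a well-defined undirected graph and that each $\ld_j$ is real (the terms pair up as $\omega_n^{js}+\omega_n^{-js}=2\cos(2\pi js/n)$), consistent with $A$ being a real symmetric matrix.
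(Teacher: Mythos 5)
Your proposal is correct and matches the paper's treatment: the paper presents this theorem as an immediate consequence of the circulant eigenvalue formula (\ref{evc}) specialized to $C=U_n$ together with the definition of the Ramanujan sum $R(j,n)=\sum_{r\in U_n}\omega_n^{jr}$, which is exactly your argument. Your added remarks on the inverse-closedness of $U_n$ and the direct eigenvector verification are sound and simply make explicit what the paper leaves to the cited reference \cite{ucg}.
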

	The Ramanujan sum $R(j,n)$ can also be expressed in terms of arithmetic functions.
	\begin{theorem}\cite{ramanujan} \label{ramanujan}
		For integers $n$, $j$ with $n\geq 1$, $$ R(j,n)=\sum_{r\divides \gcd(j,n)} \mu\left(\frac{n}{r}\right)= \mu (c_{n,j})\frac{\varphi(n)}{\varphi(c_{n,j})}, ~~\text{ where}~~ c_{n,j}=\frac{n}{\gcd(n,j)}$$ and $\mu $ is the M\"{o}bius function.
	\end{theorem}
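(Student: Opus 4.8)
The statement is the classical evaluation of the Ramanujan sum, so the plan is to derive the two equalities in turn: first the divisor-sum expression by character orthogonality combined with M\"{o}bius inversion, and then the closed form by factoring that divisor sum across prime powers. For the first equality I would begin from the definition $R(j,n)=\sum_{r\in U_n}\w^{jr}$ and remove the coprimality restriction using $\sum_{d\divides\gcd(r,n)}\mu(d)=1$ if $\gcd(r,n)=1$ and $0$ otherwise. Letting $r$ range over all of $\{1,\dots,n\}$ and interchanging the order of summation gives
\begin{equation*}
R(j,n)=\sum_{d\divides n}\mu(d)\sum_{\substack{1\le r\le n\\ d\divides r}}\w^{jr}=\sum_{d\divides n}\mu(d)\sum_{s=1}^{n/d}\omega_{n/d}^{\,js},
\end{equation*}
where I have written $r=ds$ and used $\w^{jds}=\omega_{n/d}^{\,js}$. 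The inner sum is a complete geometric sum of the $(n/d)$-th roots of unity raised to the power $j$; it equals $n/d$ when $(n/d)\divides j$ and vanishes otherwise. Re-indexing by $e=n/d$, the surviving terms are precisely those with $e\divides n$ and $e\divides j$, that is $e\divides\gcd(j,n)$, which yields $R(j,n)=\sum_{e\divides\gcd(j,n)}e\,\mu(n/e)$.

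For the closed form I would use that $\mu$ is multiplicative, so that the divisor sum factors as a product over the primes dividing $n$:
\begin{equation*}
\sum_{e\divides\gcd(j,n)}e\,\mu\!\left(\tfrac{n}{e}\right)=\prod_{p\divides n}\left(\sum_{i=0}^{m}p^{\,i}\,\mu\!\left(p^{\,a-i}\right)\right),
\end{equation*}
where, for each $p$, I write $p^{a}$ for the exact power of $p$ dividing $n$ and $p^{m}$ for that dividing $\gcd(j,n)$, so $0\le m\le a$. Since $\mu(p^{a-i})=0$ unless $a-i\in\{0,1\}$, only $i\in\{a-1,a\}$ can contribute, and a short case analysis on $m$ evaluates each local factor: if $m=a$ it is $p^{a}-p^{a-1}=\varphi(p^{a})$; if $m=a-1$ it is $-p^{a-1}$; and if $m\le a-2$ it is $0$. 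These are exactly the local values of $\mu(c_{n,j})\varphi(n)/\varphi(c_{n,j})$, because the exact power of $p$ in $c_{n,j}=n/\gcd(n,j)$ is $p^{a-m}$, which equals $1$, equals $p$, or is divisible by $p^{2}$ (according to the three cases), making the corresponding M\"{o}bius factor $1$, $-1$, or $0$. Taking the product over all $p\divides n$ then gives the second equality.

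The computation is routine rather than deep; the points needing care are the evaluation of the inner root-of-unity sum and the observation that its non-vanishing is exactly the condition $e\divides\gcd(j,n)$, together with the squarefreeness constraint imposed by $\mu$ in the prime-power analysis, which is what forces $c_{n,j}$ to be squarefree whenever $R(j,n)\neq 0$. I would also note that the factor $e$ (equivalently $r$) in the divisor sum is essential: one checks already at $n=4$, $j=0$ that $\sum_{r\divides 4} r\,\mu(4/r)=2=\varphi(4)$ reproduces the closed form, so the weighting by $r$ cannot be dropped.
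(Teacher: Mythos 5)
Your proof is correct, and there is nothing in the paper to compare it against: the paper states this theorem with a citation to Ramanujan's 1918 paper and gives no proof of its own. Your argument is the standard derivation (von Sterneck/H\"older): detect coprimality via $\sum_{d\mid\gcd(r,n)}\mu(d)$, interchange sums, evaluate the inner geometric sum of $(n/d)$-th roots of unity (equal to $n/d$ when $(n/d)\mid j$ and $0$ otherwise), and then exploit multiplicativity to factor the divisor sum into local factors at each prime, whose three cases ($m=a$, $m=a-1$, $m\le a-2$) match exactly the local factors of $\mu(c_{n,j})\varphi(n)/\varphi(c_{n,j})$. All steps check out, including the edge cases $j=0$ and $n=1$.

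The most valuable part of your write-up is the final observation: the statement as printed in the paper is false as literally written, because the divisor sum is missing the weight $r$. The correct identity, which is what you prove, is $R(j,n)=\sum_{r\mid\gcd(j,n)} r\,\mu\left(\frac{n}{r}\right)$; without the factor $r$ one gets a contradiction already at $n=4$, $j=0$, where $R(0,4)=\varphi(4)=2$ while $\mu(4)+\mu(2)+\mu(1)=0$. This typo is harmless to the rest of the paper, since every subsequent application (Lemma 3.3 through Lemma 3.5) invokes only the closed form $\mu(c_{n,j})\varphi(n)/\varphi(c_{n,j})$, which is stated correctly; but your proof establishes the corrected first equality, which is the version consistent with that closed form.
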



	\section{Periodicity on unitary Cayley graphs}\label{ucg}
	A complex number $\zeta$ is said to be an \emph{algebraic integer} if there exists an integer monic polynomial $q(x)$ such that $q(\zeta)=0$. Let $\Delta$ denote the set of all algebraic integers. Note that $\spec_A(G)\subset \Delta$ for the adjacency matrix of any graph $G$, as the characteristic polynomial of $G$ is monic with integer coefficients. It is well known that $\Delta \cap \Ql = \Zl $, and $\Delta$ is a subring of $\Cl$. The following result appears implicitly in \cite{distance}. However, our proof is quite different from that in \cite{distance}.

	\begin{theorem}\label{thm1}
		A $k$-regular integral graph $G$ is periodic if and only if $\spec_A(G) \subseteq \{ \pm k, \pm \frac{k}{2}, 0\}$.
	\end{theorem}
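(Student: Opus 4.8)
The plan is to translate periodicity into a condition on the eigenvalues of the time evolution matrix $U$, and then read that condition off $\spec_A(G)$. By Lemma~\ref{period}, $G$ is periodic precisely when \emph{every} eigenvalue of $U$ is a root of unity (given that, one takes $\tau$ to be the least common multiple of their orders, exactly as in Corollary~\ref{periodic}). Since $G$ is $k$-regular, Lemma~\ref{reg} gives $P=\tfrac1k A$, so the eigenvalues of the discriminant are $\mu_j=\tfrac{\lambda_j}{k}$, where $\lambda_0,\ldots,\lambda_{n-1}$ are the adjacency eigenvalues. Feeding these into the spectral mapping theorem (Lemma~\ref{evu}), the multi-set $\spec_U(G)$ consists of the fixed values $\pm1$ together with the numbers $e^{\pm\iu\arccos(\mu_j)}$. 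As $\pm1$ are always roots of unity, $G$ is periodic if and only if each $e^{\iu\arccos(\mu_j)}$ is a root of unity.

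Next I would make this root-of-unity condition explicit. Writing $\theta_j=\arccos(\mu_j)\in[0,\pi]$, which is well-defined because $|\mu_j|\le1$ by Lemma~\ref{reg}, the number $e^{\iu\theta_j}$ is a root of unity if and only if $\theta_j$ is a rational multiple of $\pi$. At the same time, because $G$ is integral we have $\lambda_j\in\Zl$, and since $k\in\Zl$ the eigenvalue $\mu_j=\lambda_j/k$ is \emph{rational}. Thus periodicity of $G$ is equivalent to the statement that, for every $j$, the rational number $\mu_j=\cos\theta_j$ is the cosine of a rational multiple of $\pi$.

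The decisive ingredient, and the only genuine obstacle, is the number-theoretic fact that the only rational values of $\cos(r\pi)$ with $r\in\Ql$ are $0,\pm\tfrac12,\pm1$ (Niven's theorem). This is exactly where integrality and the ring $\Delta$ of algebraic integers introduced above do the work: if $\theta_j/\pi\in\Ql$, then $2\mu_j=\zeta+\zeta^{-1}$ for the root of unity $\zeta=e^{\iu\theta_j}$, so $2\mu_j\in\Delta$; being also rational, $2\mu_j\in\Delta\cap\Ql=\Zl$, and $|2\mu_j|\le2$ forces $2\mu_j\in\{0,\pm1,\pm2\}$, i.e.\ $\mu_j\in\{0,\pm\tfrac12,\pm1\}$. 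Granting this, the equivalence closes. For the forward direction, periodicity forces each $\theta_j/\pi\in\Ql$, so by the above $\mu_j\in\{0,\pm\tfrac12,\pm1\}$ and hence $\lambda_j=k\mu_j\in\{0,\pm\tfrac{k}{2},\pm k\}$, giving $\spec_A(G)\subseteq\{0,\pm\tfrac k2,\pm k\}$. For the converse, if $\spec_A(G)\subseteq\{0,\pm\tfrac k2,\pm k\}$ then each $\mu_j\in\{0,\pm\tfrac12,\pm1\}$, whence $\theta_j\in\{0,\tfrac\pi3,\tfrac\pi2,\tfrac{2\pi}3,\pi\}$ is a rational multiple of $\pi$ and every $e^{\iu\theta_j}$ is a root of unity; together with the eigenvalues $\pm1$, all of $\spec_U(G)$ then consists of roots of unity, so $G$ is periodic. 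I would either cite Niven's theorem or insert the two-line algebraic-integer argument above; everything else is routine bookkeeping.
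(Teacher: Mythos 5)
Your proposal is correct and follows essentially the same route as the paper: both reduce periodicity to the eigenvalues of $U$ being roots of unity via Lemma \ref{reg} and the spectral mapping theorem (Lemma \ref{evu}), and both pin down the possible values of $\frac{\lambda}{k}$ by the algebraic-integer argument $2\frac{\lambda}{k}=\zeta+\zeta^{-1}\in\Delta\cap\Ql=\Zl$ together with the bound $\lvert 2\frac{\lambda}{k}\rvert\le 2$. Your invocation of Niven's theorem is just a named packaging of this same step, which you in any case prove exactly as the paper does.
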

	\begin{proof}
		Let $G$ be a $k$-regular integral periodic graph, and let $\mu\in \spec_U(G)$. Since $G$ is periodic, $\mu ^ r = 1$ for some positive integer $r$. Therefore $\mu$ is a root of $x^r-1$, that is, $\mu \in \Delta$. Thus $\spec_U(G)\subset\Delta$. 	
		
		Let $\lambda \in \spec_A(G)$. Then by Lemma \ref{reg}, $\frac{\lambda}{k}$ is an eigenvalue of $P$, and  so $e^{\pm \iu \arccos(\frac{\lambda}{k})} \in \spec_U(G)\subset\Delta$. Since $\Delta$ is a ring, $$2\frac{\lambda}{k}=e^{ \iu \arccos(\frac{\lambda}{k})}+e^{- \iu \arccos(\frac{\lambda}{k})} \in \Delta .$$ Also note that $G$ is integral, and so $\ld\in\Zl$. Therefore $2\frac{\lambda}{k}\in \Ql$, and hence $2\frac{\lambda}{k}\in \Zl$. By Lemma \ref{reg}, $-2\leq 2\frac{\lambda}{k}\leq 2$. Since $2\frac{\lambda}{k}$ is an integer, we find that $2\frac{\lambda}{k} \in \{\pm 2, \pm 1, 0\}$. Thus $\lambda \in \{ \pm k, \pm \frac{k}{2}, 0\}$.
		
		Conversely, assume  $\spec_A(G) \subseteq \left\{ \pm k, \pm \frac{k}{2}, 0\right\}$. Therefore from Lemma \ref{evu},  $$\spec_U(G) \subseteq \{\pm \iu,~ e^{\pm \iu \arccos(\pm 1)}, ~ e^{\pm \iu \arccos(\pm 1/2)}\}.$$ Thus one can find a positive integer $m$ such that $\mu^m=1$ for any $\mu\in \spec_U(G)$. Therefore, $G$ is a periodic graph.
	\end{proof}
	Recall that $\uc(n)$ is $\varphi(n)$-regular. Therefore the next corollary follows easily from Theorem \ref{thm1}.
	\begin{corollary} \label{thmk}
		The	unitary Cayley graph $\uc(n)$ is periodic if and only if $$\spec_A(\uc(n)) \subseteq \left\{ \pm \varphi(n), \pm \frac{\varphi(n)}{2}, 0\right\}.$$
	\end{corollary}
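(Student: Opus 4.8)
The plan is to obtain this statement as a direct specialization of Theorem \ref{thm1}, so the entire task reduces to checking that $\uc(n)$ meets the two hypotheses of that theorem—namely, that it is $k$-regular for a specific value of $k$ and that it is integral. Once both are in place, one simply substitutes the value of $k$ and reads off the conclusion.

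First I would pin down the value of $k$. Since $\uc(n)=\cay(\Zl_n,U_n)$ and any Cayley graph $\cay(\Gamma,C)$ is $|C|$-regular, the graph $\uc(n)$ is $\varphi(n)$-regular because $|U_n|=\varphi(n)$. Next I would verify integrality. By Theorem \ref{main}, the adjacency eigenvalues of $\uc(n)$ are exactly the Ramanujan sums $R(j,n)$ for $0\le j\le n-1$, and by Theorem \ref{ramanujan} each of these equals $\mu(c_{n,j})\,\varphi(n)/\varphi(c_{n,j})$ with $c_{n,j}=n/\gcd(n,j)$. Since $c_{n,j}$ divides $n$, and $\varphi(d)\mid\varphi(n)$ whenever $d\mid n$, the quantity $\varphi(n)/\varphi(c_{n,j})$ is an integer; hence every $R(j,n)$ is an integer and $\uc(n)$ is integral.

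With regularity and integrality established, I would apply Theorem \ref{thm1} with $k=\varphi(n)$. That theorem asserts that a $k$-regular integral graph is periodic if and only if its adjacency spectrum is contained in $\{\pm k,\pm \tfrac{k}{2},0\}$, and substituting $k=\varphi(n)$ yields precisely the claimed equivalence
$$
\spec_A(\uc(n))\subseteq\left\{\pm\varphi(n),\pm\tfrac{\varphi(n)}{2},0\right\}.
$$
There is no genuine obstacle here: the statement is an immediate corollary, and the only two points needing any justification—the regularity degree $\varphi(n)$ and the integrality of $\uc(n)$, both of which are recorded in Section \ref{prel}—are routine consequences of Theorems \ref{main} and \ref{ramanujan}.
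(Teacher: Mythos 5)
Your proposal is correct and follows essentially the same route as the paper, which obtains the corollary by specializing Theorem \ref{thm1} to the $\varphi(n)$-regular graph $\uc(n)$. The only difference is cosmetic: the paper simply cites the (stated-as-well-known) integrality of $\uc(n)$ from Section \ref{prel}, whereas you verify it explicitly via Theorems \ref{main} and \ref{ramanujan} using the fact that $\varphi(c_{n,j})\mid\varphi(n)$ since $c_{n,j}\mid n$ --- a correct and harmless extra detail.
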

	
	\begin{lema}\label{prime}
		If the unitary Cayley graph $\uc(n)$ is periodic, then the only prime factors of $n$ are $2$ or $3$.
	\end{lema}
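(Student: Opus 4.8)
The plan is to combine the spectral characterization of periodicity in Corollary \ref{thmk} with the explicit description of the eigenvalues of $\uc(n)$ as Ramanujan sums. By Corollary \ref{thmk}, periodicity of $\uc(n)$ forces every adjacency eigenvalue to lie in $\left\{\pm\varphi(n),\pm\tfrac{\varphi(n)}{2},0\right\}$, and by Theorem \ref{main} together with Theorem \ref{ramanujan} these eigenvalues are exactly the numbers $\mu(c_{n,j})\frac{\varphi(n)}{\varphi(c_{n,j})}$ with $c_{n,j}=n/\gcd(n,j)$. So the first step is to note that as $j$ runs over $\{0,\dots,n-1\}$, the quantity $\gcd(n,j)$ attains every positive divisor of $n$, and hence $c_{n,j}$ also sweeps out all positive divisors of $n$. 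Consequently, for \emph{every} divisor $c$ of $n$ the number $\mu(c)\frac{\varphi(n)}{\varphi(c)}$ must belong to $\left\{\pm\varphi(n),\pm\tfrac{\varphi(n)}{2},0\right\}$.

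Next I would restrict attention to square-free divisors $c$, for which $\mu(c)=\pm1\neq 0$. For such $c$ the constraint reduces to $\frac{\varphi(n)}{\varphi(c)}\in\left\{\varphi(n),\tfrac{\varphi(n)}{2}\right\}$, that is, $\varphi(c)\in\{1,2\}$. A short check of the totient function shows that the only positive integers with $\varphi(c)\leq 2$ are $c\in\{1,2,3,4,6\}$, and discarding the non-square-free value $c=4$ leaves $c\in\{1,2,3,6\}$. Thus periodicity forces every square-free divisor of $n$ to lie in $\{1,2,3,6\}$.

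Finally, suppose for contradiction that $n$ has a prime factor $p\geq 5$. Then $p$ is itself a square-free divisor of $n$ with $\varphi(p)=p-1\geq 4$, contradicting the previous step; hence the only primes dividing $n$ are $2$ and $3$. The argument is essentially forced once the eigenvalues are rewritten through Theorem \ref{ramanujan}, so I do not expect a single hard step. The points requiring the most care are the first, namely confirming that $c_{n,j}$ runs over \emph{all} divisors of $n$ (so that the eigenvalue constraint really applies to each prime divisor $p\mid n$), and the use of the square-free hypothesis to guarantee $\mu(c)\neq 0$ before cancelling $\varphi(c)$.
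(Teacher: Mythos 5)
Your proposal is correct and follows essentially the same route as the paper: both arguments combine Corollary \ref{thmk} with the Ramanujan-sum formula of Theorem \ref{ramanujan}, observing that for a prime divisor $p$ of $n$ the value $\mu(p)\frac{\varphi(n)}{\varphi(p)}=-\frac{\varphi(n)}{\varphi(p)}$ occurs as an adjacency eigenvalue (the paper realizes this by taking $j=n/p$ so that $c_{n,j}=p$, which is the concrete instance of your observation that $c_{n,j}$ sweeps all divisors), and then forcing $\varphi(p)\in\{1,2\}$. Your detour through all square-free divisors is a mild generalization of the same mechanism, not a different proof.
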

	\begin{proof}
		Let $p$ be a prime such that  $n=p m$ for some positive integer $m$. Note that $1\leq m \leq n-1$. We consider the eigenvalue $\lambda_{m}$ of the adjacency matrix of $\uc(n)$. By Theorem \ref{ramanujan}, we have $$\lambda_{m}=R(m,n)=\mu(p)\frac{\varphi(n)}{\varphi(p)}=-\frac{\varphi(n)}{\varphi(p)},~\text{as}~ c_{n,m}=p~\text{and}~\mu(p)=-1.$$ By Corollary \ref{thmk}, it is easy to conclude that $\varphi(p)=1$ or $2$, as $\uc(n)$ is periodic. Therefore, the only possible values for the prime $p$ are $2$ and $3$.
	\end{proof}

	\begin{lema}
		Let $n=2^\alpha$ for some positive integer $\alpha$. Then the unitary Cayley graph $\uc(n)$ is periodic.
	\end{lema}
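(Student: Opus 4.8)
The plan is to invoke Corollary \ref{thmk}, which reduces the periodicity of $\uc(n)$ to the single spectral condition $\spec_A(\uc(n)) \subseteq \{\pm\varphi(n), \pm\frac{\varphi(n)}{2}, 0\}$. Thus it suffices to run over every eigenvalue $\lambda_j = R(j,n)$ with $0 \le j \le n-1$ and verify that it lands in this set.

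First I would replace the Ramanujan-sum eigenvalue by its closed arithmetic form from Theorem \ref{ramanujan}: for $0 \le j \le n-1$, $\lambda_j = \mu(c_{n,j})\frac{\varphi(n)}{\varphi(c_{n,j})}$, where $c_{n,j} = n/\gcd(n,j)$. The point of specializing to $n = 2^\alpha$ is that $\gcd(2^\alpha, j)$ is necessarily a power of $2$, and hence so is $c_{n,j}$; write $c_{n,j} = 2^s$ with $0 \le s \le \alpha$.

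The key observation is that the M\"obius function forces a clean trichotomy in $s$. If $s = 0$, then $c_{n,j} = 1$, so $\mu(1) = 1$ and $\varphi(1) = 1$, giving $\lambda_j = \varphi(n)$ (this is exactly the case $j = 0$). If $s = 1$, then $\mu(2) = -1$ and $\varphi(2) = 1$, giving $\lambda_j = -\varphi(n)$. If $s \ge 2$, then $2^s$ is not squarefree, so $\mu(2^s) = 0$ and $\lambda_j = 0$. Consequently $\spec_A(\uc(2^\alpha)) \subseteq \{\varphi(n), -\varphi(n), 0\}$, which is contained in $\{\pm\varphi(n), \pm\frac{\varphi(n)}{2}, 0\}$, and Corollary \ref{thmk} immediately yields periodicity.

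I do not expect a genuine obstacle here; the only step worth stating carefully is that for $n = 2^\alpha$ every $c_{n,j}$ is a power of $2$, so that the vanishing of $\mu$ on non-squarefree arguments annihilates all eigenvalues except $\pm\varphi(n)$ and $0$. As a sanity check I would confirm the boundary case $\alpha = 1$, where $\uc(2) = K_2$ has spectrum $\{\pm 1\} = \{\pm\varphi(2)\}$ and the value $0$ simply does not arise; this is consistent with the argument, since only a subset relation is required.
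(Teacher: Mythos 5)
Your proof is correct and follows essentially the same route as the paper's: reduce to the spectral condition of Corollary \ref{thmk} and evaluate $\lambda_j = \mu(c_{n,j})\frac{\varphi(n)}{\varphi(c_{n,j})}$ via Theorem \ref{ramanujan}, with the M\"obius function annihilating every non-squarefree $c_{n,j} = 2^s$, $s \ge 2$. The only difference is organizational: you parametrize by $s$ and treat all $\alpha \ge 1$ uniformly, whereas the paper handles $\alpha = 1$ separately and then case-splits on $j$ for $\alpha \ge 2$.
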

	\begin{proof}
		The eigenvalues of the adjacency matrix of $\uc(2)$ are $1$ and $-1$. Thus by Corollary \ref{thmk}, $\uc(2)$ is periodic.

		Now let $\alpha\geq2$. For $j=0$, $$\lambda_0=R(0,n)=\mu(1)\frac{\varphi(n)}{\varphi(1)}=\varphi(n).$$ For $j=2^{\alpha -1},$ we have $$\lambda_j=R(j,n)=\mu(2)\frac{\varphi(n)}{\varphi(2)}=-\varphi(n).$$  Now let $j\notin\{ 0,\ 2^{\alpha-1}\}$ and $1\leq j\leq n-1$. If $\gcd(j,n)=1$ then $c_{n,j}=n$, and so $$\lambda_j=R(j,n)=\mu(n)\frac{\varphi(n)}{\varphi(n)}=0.$$ If $\gcd(j,n)=2^r$ for some $r$ with $1\leq r\leq \alpha -2$, then $c_{n,j}=2^{\alpha-r}$. As $ \alpha - t\geq 2$, we have $\mu (c_{n,j})=0$. Hence $\lambda_j=0$. Thus we find that $$\spec_A(\uc(n))=\left\{\pm \varphi(n),\ 0\right\}.$$ Using Corollary \ref{thmk}, we conclude that $\uc(n)$ is periodic for $n=2^\alpha$ as well, where $\alpha\geq 2$.	
	\end{proof}

	\begin{lema}
		Let $n=3^\beta$ for some positive integer $\beta$. Then the unitary Cayley graph $\uc(n)$ is periodic.
	\end{lema}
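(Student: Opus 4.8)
The plan is to replicate the structure of the preceding lemma (the $n=2^\alpha$ case) and reduce everything to Corollary \ref{thmk}: it suffices to prove that every eigenvalue of $\uc(3^\beta)$ lies in $\left\{\pm\varphi(n),\ \pm\frac{\varphi(n)}{2},\ 0\right\}$. By Theorem \ref{main} the eigenvalues are the Ramanujan sums $\lambda_j=R(j,n)$, and by Theorem \ref{ramanujan} each equals $\mu(c_{n,j})\frac{\varphi(n)}{\varphi(c_{n,j})}$ with $c_{n,j}=n/\gcd(n,j)$. The key observation is that, since $n=3^\beta$, the only divisors of $n$ are powers of $3$; writing $\gcd(j,n)=3^s$ with $0\le s\le\beta$ forces $c_{n,j}=3^{\beta-s}$, so the entire computation is governed by the single integer $\beta-s$.

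With this setup I would split into three cases according to the value of $c_{n,j}$. For $j=0$ we have $\gcd(0,n)=n$, hence $c_{n,j}=1$ and $\lambda_0=\mu(1)\varphi(n)=\varphi(n)$. For the indices with $c_{n,j}=3$ (that is, $\gcd(j,n)=3^{\beta-1}$) we get $\lambda_j=\mu(3)\frac{\varphi(n)}{\varphi(3)}=-\frac{\varphi(n)}{2}$, using $\mu(3)=-1$ and $\varphi(3)=2$. For all remaining indices, $c_{n,j}=3^{\beta-s}$ with $\beta-s\ge 2$, so $c_{n,j}$ is divisible by $9$, is therefore not squarefree, and $\mu(c_{n,j})=0$ yields $\lambda_j=0$. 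Collecting the cases gives $\spec_A(\uc(3^\beta))\subseteq\left\{\varphi(n),\ -\frac{\varphi(n)}{2},\ 0\right\}$, which is contained in the set required by Corollary \ref{thmk}, and the graph is periodic.

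I do not expect a genuine obstacle here, since the argument is essentially bookkeeping once Theorem \ref{ramanujan} is in hand. The two points requiring care are the vanishing of the M\"obius function on non-squarefree arguments, which is exactly what collapses all the ``interior'' eigenvalues to $0$, and the base case $\beta=1$: there the range $\beta-s\ge 2$ is empty, so the eigenvalue $0$ does not arise and $\uc(3)=K_3$ has spectrum $\{2,-1\}$. One should note that this degenerate situation still satisfies the inclusion, since $-1=-\frac{\varphi(3)}{2}$, so the conclusion holds uniformly for every $\beta\ge 1$.
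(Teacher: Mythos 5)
Your proposal is correct and follows essentially the same route as the paper: reduce to Corollary \ref{thmk} and compute the eigenvalues as Ramanujan sums via Theorem \ref{ramanujan}, with the M\"obius function vanishing on non-squarefree arguments killing all eigenvalues other than $\varphi(n)$ and $-\frac{\varphi(n)}{2}$. The only difference is organizational: the paper treats $\beta=1$ as a separate base case by quoting the spectrum of $\uc(3)$ directly, whereas you fold it into the uniform case analysis by noting the $\mu(c_{n,j})=0$ case is vacuous there, which is a harmless (arguably cleaner) variation.
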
	
	\begin{proof}
	The eigenvalues of the adjacency matrix of $\uc(3)$ are $2$ and $-1$. Thus by Corollary \ref{thmk}, $\uc(3)$ is periodic.
		
		Now let $\beta\geq 2$. For $j=0$, $$\lambda_0=R(0,n)=\mu(1)\frac{\varphi(n)}{\varphi(1)}=\varphi(n).$$ For $j=3^{\beta -1}$, we have 
  $$\lambda_j=R(j,n)=\mu(3)\frac{\varphi(n)}{\varphi(3)}=-\frac{\varphi(n)}{2}.$$
  Now let $j\notin\{ 0,\ 3^{\beta-1}\}$ and $1\leq j\leq n-1$. If $\gcd(j,n)=1$ then $c_{n,j}=n$, and so 
  $$\lambda_j=R(j,n)=\mu(n)\frac{\varphi(n)}{\varphi(n)}=0.$$
  If $\gcd(j,n)=3^r$ for some $r$ with $1\leq r\leq \beta -1 $, then $c_{n,j}=3^{\beta-r}$. If $\beta - r\geq 2$ then $\mu (c_{n,j})=0$, and so $\lambda_j=0$. If $\beta - r= 1$, then the only possible value of $j$ is $2\times3^{\beta -1} $. In that case, $\ld_j = -\frac{\varphi(n)}{2}$. Therefore $$\spec_A(\uc(n))=\left\{\varphi(n),\ -\frac{\varphi(n)}{2},\ 0\right\}.$$ Hence by Corollary \ref{thmk}, $\uc(n)$ is also periodic for $n=3^\beta$, where $\beta\geq2$.	
	\end{proof}
\begin{lema}\label{lastuclemma}
		Let $n=2^\alpha  3^\beta$ for some positive integers $\alpha$ and  $\beta$. Then the unitary Cayley graph $\uc(n)$ is periodic.
	\end{lema}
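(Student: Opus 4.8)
The plan is to invoke Corollary~\ref{thmk}, which reduces periodicity of $\uc(n)$ to the single spectral condition $\spec_A(\uc(n)) \subseteq \left\{\pm\varphi(n), \pm\frac{\varphi(n)}{2}, 0\right\}$. So it suffices to compute every eigenvalue $\lambda_j = R(j,n)$ of the adjacency matrix and to check that it lands in this five-element set. Exactly as in the two preceding lemmas, I would work directly from the closed form in Theorem~\ref{ramanujan}, namely $\lambda_j = \mu(c_{n,j})\frac{\varphi(n)}{\varphi(c_{n,j})}$ with $c_{n,j} = \frac{n}{\gcd(n,j)}$.

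The key observation is that $\lambda_j$ vanishes unless $\mu(c_{n,j}) \neq 0$, i.e. unless $c_{n,j}$ is squarefree. Since $c_{n,j}$ is a divisor of $n = 2^\alpha 3^\beta$, its only squarefree possibilities are $c_{n,j} \in \{1,2,3,6\}$; every other divisor of $n$ carries a repeated prime factor and hence contributes $\lambda_j = 0$. Thus the whole case analysis collapses to these four values of $c_{n,j}$.

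Next I would evaluate $\lambda_j$ in each surviving case using $\mu(1)=1$, $\mu(2)=\mu(3)=-1$, $\mu(6)=1$ together with $\varphi(1)=\varphi(2)=1$ and $\varphi(3)=\varphi(6)=2$. This gives $\lambda_j = \varphi(n)$ when $c_{n,j}=1$ (i.e. $j=0$), $\lambda_j = -\varphi(n)$ when $c_{n,j}=2$, $\lambda_j = -\frac{\varphi(n)}{2}$ when $c_{n,j}=3$, and $\lambda_j = \frac{\varphi(n)}{2}$ when $c_{n,j}=6$. Since $\alpha,\beta \geq 1$, all four values of $c_{n,j}$ are genuinely attained, by taking $\gcd(n,j)$ equal to $n$, $\frac{n}{2}$, $\frac{n}{3}$, $\frac{n}{6}$ respectively, so in fact $\spec_A(\uc(n)) = \left\{\pm\varphi(n), \pm\frac{\varphi(n)}{2}, 0\right\}$. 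In any case the containment required by Corollary~\ref{thmk} holds, and periodicity follows.

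There is no serious obstacle here: the argument is essentially the bookkeeping of identifying which $c_{n,j}$ are squarefree. The only point demanding a little care is the mixed case $c_{n,j}=6$, which did not arise in the pure prime-power lemmas and which produces the new eigenvalue $+\frac{\varphi(n)}{2}$. One should confirm the sign $\mu(6)=+1$ so that this eigenvalue lands at $+\frac{\varphi(n)}{2}$ rather than $-\frac{\varphi(n)}{2}$; it is precisely this term that completes the set $\left\{\pm\varphi(n), \pm\frac{\varphi(n)}{2}, 0\right\}$.
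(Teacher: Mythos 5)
Your proposal is correct, and it rests on the same two ingredients as the paper's proof: Corollary~\ref{thmk} together with the closed form $\lambda_j=\mu(c_{n,j})\frac{\varphi(n)}{\varphi(c_{n,j})}$ from Theorem~\ref{ramanujan}. The difference is in how the case analysis is indexed. The paper enumerates four exceptional values of $j$ (namely $0$, $2^{\alpha}3^{\beta-1}$, $2^{\alpha-1}3^{\beta}$, $2^{\alpha-1}3^{\beta-1}$) and then argues that every remaining $j$ has $\gcd(j,n)=2^s3^r$ with either $s\leq\alpha-2$ or $r\leq\beta-2$, forcing $\mu(c_{n,j})=0$; that observation fails when $\alpha=\beta=1$, which is why the paper treats $n=6$ as a separate case. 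You instead classify by the value of $c_{n,j}$ itself, noting that only the squarefree divisors $1,2,3,6$ of $n$ can contribute nonzero eigenvalues; this handles all $\alpha,\beta\geq 1$ uniformly, with no special case for $n=6$, which is a genuine (if modest) tidying of the argument. One small overstatement: your concluding equality $\spec_A(\uc(n))=\left\{\pm\varphi(n),\pm\frac{\varphi(n)}{2},0\right\}$ is false for $n=6$, since every divisor of $6$ is squarefree, so $0$ is never attained there (indeed $\uc(6)=C_6$ has spectrum $\{\pm2,\pm1\}$); you only justified attainment of the four nonzero values, not of $0$. This is harmless for the lemma, because Corollary~\ref{thmk} requires only the containment $\spec_A(\uc(n))\subseteq\left\{\pm\varphi(n),\pm\frac{\varphi(n)}{2},0\right\}$, which is exactly what your final sentence falls back on.
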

	\begin{proof}
		Let $\alpha =1$ and $\beta =1$. In this case, $n=6$ and $\uc(6)$ has the adjacency spectrum $\{\pm 2, \pm 1\}$. Therefore $\uc(6)$ is periodic.

		Now let $n=2^\alpha  3^\beta$,  where $\alpha \geq 1 ~\text{and} ~ \beta \geq 1$ such that $\alpha \beta \neq 1$. For $j=0$, $$\lambda_0=R(0,n)=\varphi(n).$$ For $j=2^{\alpha} 3^{\beta -1}$, $$\lambda_j=R(j,n)=-\frac{\varphi(n)}{2}.$$ For $j=2^{\alpha-1} 3^{\beta}$, $$\lambda_j=R(j,n)=-\varphi(n).$$ For $j=2^{\alpha -1}3^{\beta -1}$, $$\lambda_j=R(j,n)=\frac{\varphi(n)}{2}.$$
		Let $j\notin \{ 0,\ 2^{\alpha} 3^{\beta -1},\ 2^{\alpha-1} 3^{\beta},\ 2^{\alpha -1}3^{\beta -1}\}$ and $ 0\leq j \leq n-1$. If $\gcd(j,n)=1$ then $c_{n,j}=n$, and so $$\lambda_j=R(j,n)=\mu(n)\frac{\varphi(n)}{\varphi(n)}=0.$$ Now let $\gcd(j,n)=2^s3^r$ for some non negative integers $s$ and $r$ such that  $0\leq s\leq \alpha$, $0\leq r\leq \beta $ and $s+r\neq 0$. Observe that either $s\leq \alpha -2$ or $r\leq \beta -2$, as $j\notin  \{0,\ 2^{\alpha} 3^{\beta -1},\ 2^{\alpha-1} 3^{\beta},\ 2^{\alpha -1}3^{\beta -1}\}$.  Therefore $c_{n,j}=2^{\alpha - s} 3^{\beta -r}$, where either $\alpha - s\geq 2~\text{or}~\beta -r \geq 2$. This gives $\mu (c_{n,j})=0$, and so $\lambda_j=0$. Thus we have 
  $$\spec_A(\uc(n))=\left\{\pm \varphi(n),\ \pm \frac{\varphi(n)}{2},\ 0\right\}.$$
  Using Corollary \ref{thmk}, we find that $\uc(n)$ is periodic for $n=2^\alpha3^\beta$ as well, where $\alpha\geq 1$, $\beta\geq1$ and $\alpha\beta\neq 1$.	
	\end{proof}
	Combining Lemma \ref{prime} to Lemma \ref{lastuclemma}, we have the next theorem.
	
	\begin{theorem}\label{main1}
		The unitary Cayley graph $\uc(n)$ is periodic if and only if  $n=2^\alpha 3^\beta$, where $\alpha ~\text{and}~ \beta $ are non-negative integers with $\alpha + \beta \neq 0 $.
	\end{theorem}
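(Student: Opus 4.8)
The plan is to obtain the equivalence by assembling it directly from the four preceding lemmas, which between them exhaust every possibility for $n$. Since the statement is an ``if and only if,'' I would argue the two implications separately, and in each I would reduce everything to the spectral criterion of Corollary \ref{thmk}, namely that $\uc(n)$ is periodic precisely when $\spec_A(\uc(n)) \subseteq \{\pm\varphi(n),\ \pm\tfrac{\varphi(n)}{2},\ 0\}$.

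For the necessity direction, I would assume $\uc(n)$ is periodic and invoke Lemma \ref{prime}, which forces every prime divisor of $n$ to lie in $\{2,3\}$. Consequently $n$ must be of the form $2^\alpha 3^\beta$ for some non-negative integers $\alpha,\beta$; since $n \geq 2$ under the standing convention on $\Zl_n$, at least one exponent is positive, giving $\alpha+\beta \neq 0$. For the sufficiency direction I would suppose $n = 2^\alpha 3^\beta$ with $\alpha+\beta \neq 0$ and split into three exhaustive cases according to which exponents vanish: (i) $\beta=0$ and $\alpha\geq1$, covered by the lemma establishing periodicity of $\uc(2^\alpha)$; (ii) $\alpha=0$ and $\beta\geq1$, covered by the lemma for $\uc(3^\beta)$; and (iii) $\alpha,\beta\geq1$, covered by Lemma \ref{lastuclemma}. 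In every case the cited lemma places $\spec_A(\uc(n))$ inside the admissible set above, so periodicity follows.

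The genuine work sits inside those lemmas rather than in this final synthesis, and the step I expect to be the main obstacle is the necessity lemma (Lemma \ref{prime}). Its proof must rule out any prime factor $p\geq5$, which it does by writing $n=pm$, identifying the explicit eigenvalue $\lambda_m = R(m,n) = -\varphi(n)/\varphi(p)$ through the Ramanujan-sum evaluation of Theorem \ref{ramanujan}, and then comparing this value against the admissible spectrum of Corollary \ref{thmk} to force $\varphi(p)\in\{1,2\}$. Once that constraint, together with the three constructive spectrum computations for $2^\alpha$, $3^\beta$, and $2^\alpha3^\beta$, is in hand, the equivalence closes with no further estimates.
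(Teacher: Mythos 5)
Your proposal is correct and follows essentially the same route as the paper: necessity via Lemma \ref{prime} (forcing all prime divisors of $n$ into $\{2,3\}$ through the Ramanujan-sum eigenvalue $\lambda_m=-\varphi(n)/\varphi(p)$ and Corollary \ref{thmk}), and sufficiency by the three spectrum computations for $n=2^\alpha$, $n=3^\beta$, and $n=2^\alpha 3^\beta$, exactly as the paper assembles the theorem by combining Lemma \ref{prime} through Lemma \ref{lastuclemma}.
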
	
	Note that the explicit form of the eigenvalues of the adjacency matrix of $\uc(n)$ are known for \linebreak[4] $n=2^\alpha3^\beta$. Therefore by Corollary \ref{periodic}, we easily find the period of $\uc(n)$ for $n=2^\alpha3^\beta$. For $n=2^\alpha$, we have $\spec_A(\uc(n))=\{\pm \varphi(n), 0\}$. As $\uc(n)$ is $\varphi(n)$-regular, Lemma \ref{reg} gives that $\spec_P(\uc(n))=\{\pm1,0\}.$ Therefore by Lemma \ref{evu}, $\spec_U(\uc(n))=\{\pm \iu, \pm 1\}.$ Hence the period of $\uc(n)$ is $4$ for $n=2^\alpha$. Similarly, for the other cases, the period of $\uc(n)$ is $12$.  
	
	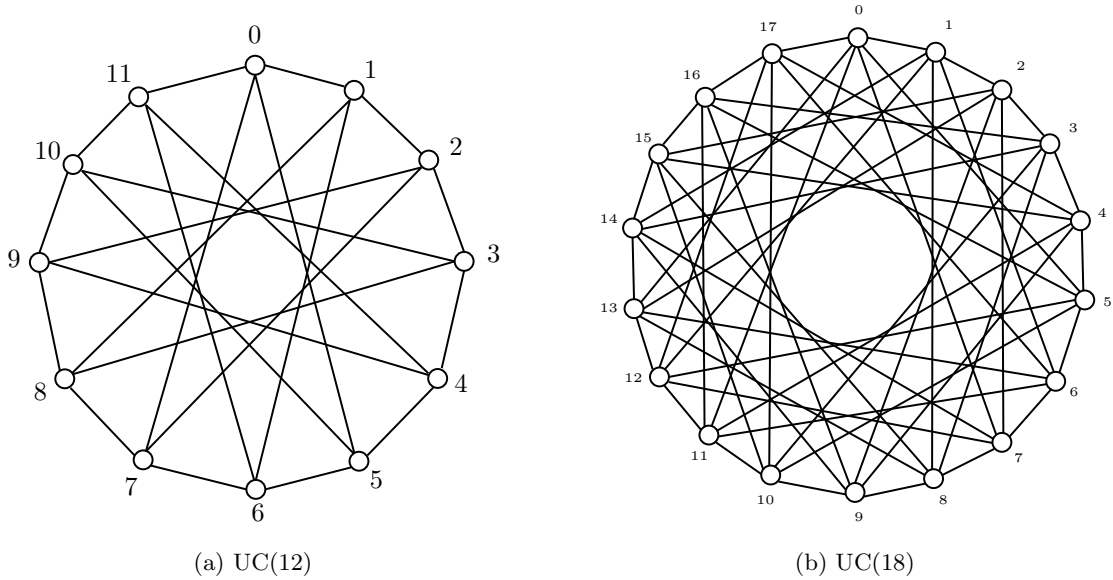
\begin{figure}[h!]
		\centering
		\begin{subfigure}{.5\textwidth}
			\centering

			\tikzset{every picture/.style={line width=0.75pt}} 
			
			\begin{tikzpicture}[x=0.65pt,y=0.65pt,yscale=-1,xscale=1]

				\draw   (193.93,88.43) .. controls (193.93,85.4) and (196.4,82.93) .. (199.43,82.93) .. controls (202.47,82.93) and (204.93,85.4) .. (204.93,88.43) .. controls (204.93,91.47) and (202.47,93.93) .. (199.43,93.93) .. controls (196.4,93.93) and (193.93,91.47) .. (193.93,88.43) -- cycle ;
				\draw   (232.13,49.03) .. controls (232.13,46) and (234.6,43.53) .. (237.63,43.53) .. controls (240.67,43.53) and (243.13,46) .. (243.13,49.03) .. controls (243.13,52.07) and (240.67,54.53) .. (237.63,54.53) .. controls (234.6,54.53) and (232.13,52.07) .. (232.13,49.03) -- cycle ;
				\draw   (299.9,30.6) .. controls (299.9,27.56) and (302.36,25.1) .. (305.4,25.1) .. controls (308.44,25.1) and (310.9,27.56) .. (310.9,30.6) .. controls (310.9,33.64) and (308.44,36.1) .. (305.4,36.1) .. controls (302.36,36.1) and (299.9,33.64) .. (299.9,30.6) -- cycle ;
				\draw   (406.13,213.03) .. controls (406.13,210) and (408.6,207.53) .. (411.63,207.53) .. controls (414.67,207.53) and (417.13,210) .. (417.13,213.03) .. controls (417.13,216.07) and (414.67,218.53) .. (411.63,218.53) .. controls (408.6,218.53) and (406.13,216.07) .. (406.13,213.03) -- cycle ;
				\draw   (174.33,145.43) .. controls (174.33,142.4) and (176.8,139.93) .. (179.83,139.93) .. controls (182.87,139.93) and (185.33,142.4) .. (185.33,145.43) .. controls (185.33,148.47) and (182.87,150.93) .. (179.83,150.93) .. controls (176.8,150.93) and (174.33,148.47) .. (174.33,145.43) -- cycle ;
				\draw   (189.13,213.23) .. controls (189.13,210.2) and (191.6,207.73) .. (194.63,207.73) .. controls (197.67,207.73) and (200.13,210.2) .. (200.13,213.23) .. controls (200.13,216.27) and (197.67,218.73) .. (194.63,218.73) .. controls (191.6,218.73) and (189.13,216.27) .. (189.13,213.23) -- cycle ;
				\draw   (357.53,45.23) .. controls (357.53,42.2) and (360,39.73) .. (363.03,39.73) .. controls (366.07,39.73) and (368.53,42.2) .. (368.53,45.23) .. controls (368.53,48.27) and (366.07,50.73) .. (363.03,50.73) .. controls (360,50.73) and (357.53,48.27) .. (357.53,45.23) -- cycle ;
				\draw   (400.93,85.83) .. controls (400.93,82.8) and (403.4,80.33) .. (406.43,80.33) .. controls (409.47,80.33) and (411.93,82.8) .. (411.93,85.83) .. controls (411.93,88.87) and (409.47,91.33) .. (406.43,91.33) .. controls (403.4,91.33) and (400.93,88.87) .. (400.93,85.83) -- cycle ;
				\draw   (421.53,144.83) .. controls (421.53,141.8) and (424,139.33) .. (427.03,139.33) .. controls (430.07,139.33) and (432.53,141.8) .. (432.53,144.83) .. controls (432.53,147.87) and (430.07,150.33) .. (427.03,150.33) .. controls (424,150.33) and (421.53,147.87) .. (421.53,144.83) -- cycle ;
				\draw   (300.33,277.63) .. controls (300.33,274.6) and (302.8,272.13) .. (305.83,272.13) .. controls (308.87,272.13) and (311.33,274.6) .. (311.33,277.63) .. controls (311.33,280.67) and (308.87,283.13) .. (305.83,283.13) .. controls (302.8,283.13) and (300.33,280.67) .. (300.33,277.63) -- cycle ;
				\draw   (360.33,261.23) .. controls (360.33,258.2) and (362.8,255.73) .. (365.83,255.73) .. controls (368.87,255.73) and (371.33,258.2) .. (371.33,261.23) .. controls (371.33,264.27) and (368.87,266.73) .. (365.83,266.73) .. controls (362.8,266.73) and (360.33,264.27) .. (360.33,261.23) -- cycle ;
				\draw   (234.73,260.43) .. controls (234.73,257.4) and (237.2,254.93) .. (240.23,254.93) .. controls (243.27,254.93) and (245.73,257.4) .. (245.73,260.43) .. controls (245.73,263.47) and (243.27,265.93) .. (240.23,265.93) .. controls (237.2,265.93) and (234.73,263.47) .. (234.73,260.43) -- cycle ;
				\draw    (243,46.4) -- (299.9,31.6) ;
				\draw    (202.4,83.4) -- (232.6,52.4) ;
				\draw    (311.33,277.63) -- (361.23,264.83) ;
				\draw    (310.9,30.6) -- (357.4,43) ;
				\draw    (402.6,82.4) -- (367.5,48.6) ;
				\draw    (179.83,139.93) -- (196.3,93) ;
				\draw    (179.83,150.93) -- (191.8,208.6) ;
				\draw    (414,207) -- (427.03,150.33) ;
				\draw    (198,218.6) -- (235.2,257.8) ;
				\draw    (244.8,263.8) -- (300.33,277.63) ;
				\draw    (427.03,139.33) -- (409.5,91) ;
				\draw    (370.8,257.8) -- (409.2,218.2) ;
				\draw    (305.4,36.1) -- (363,256) ;
				\draw    (305.4,36.1) -- (241.8,255.6) ;
				\draw    (360.6,50.9) -- (305.83,272.13) ;
				\draw    (360.6,50.9) -- (199.2,209.6) ;
				\draw    (402.2,90.1) -- (241.8,255.6) ;
				\draw    (402.2,90.1) -- (185.33,145.43) ;
				\draw    (421.53,144.83) -- (199.2,210.6) ;
				\draw    (204.6,92.1) -- (421.53,144.83) ;
				\draw    (185.33,145.43) -- (406.2,211) ;
				\draw    (241.8,53.7) -- (406.2,210) ;
				\draw    (204.6,92.1) -- (355.84,249.59) -- (363,257) ;
				\draw    (241.8,53.7) -- (304.83,272.13) ;
				
				\draw (299.6,6.2) node [anchor=north west][inner sep=0.75pt]   [align=left] {0};
				\draw (367.2,26.6) node [anchor=north west][inner sep=0.75pt]   [align=left] {1};
				\draw (416.8,71.4) node [anchor=north west][inner sep=0.75pt]   [align=left] {2};
				\draw (419.6,209.4) node [anchor=north west][inner sep=0.75pt]   [align=left] {4};
				\draw (438.8,133.8) node [anchor=north west][inner sep=0.75pt]   [align=left] {3};
				\draw (370.4,265.4) node [anchor=north west][inner sep=0.75pt]   [align=left] {5};
				\draw (301.6,284.6) node [anchor=north west][inner sep=0.75pt]   [align=left] {6};
				\draw (227.6,269) node [anchor=north west][inner sep=0.75pt]   [align=left] {7};
				\draw (174.8,213.8) node [anchor=north west][inner sep=0.75pt]   [align=left] {8};
				\draw (159.6,136.6) node [anchor=north west][inner sep=0.75pt]   [align=left] {9};
				\draw (175.2,73.4) node [anchor=north west][inner sep=0.75pt]   [align=left] {10};
				\draw (216.8,28.6) node [anchor=north west][inner sep=0.75pt]   [align=left] {11};

			\end{tikzpicture}
			\caption{$\uc(12)$}\label{fig:sub1}
		\end{subfigure}%
		\begin{subfigure}{.5\textwidth}
			\centering

			\tikzset{every picture/.style={line width=0.75pt}} 
			
			\begin{tikzpicture}[x=0.55pt,y=0.55pt,yscale=-1,xscale=1]

				\draw   (264,47) .. controls (264,43.41) and (266.91,40.5) .. (270.5,40.5) .. controls (274.09,40.5) and (277,43.41) .. (277,47) .. controls (277,50.59) and (274.09,53.5) .. (270.5,53.5) .. controls (266.91,53.5) and (264,50.59) .. (264,47) -- cycle ;
				\draw   (420,227.5) .. controls (420,223.91) and (422.91,221) .. (426.5,221) .. controls (430.09,221) and (433,223.91) .. (433,227.5) .. controls (433,231.09) and (430.09,234) .. (426.5,234) .. controls (422.91,234) and (420,231.09) .. (420,227.5) -- cycle ;
				\draw   (262,360) .. controls (262,356.41) and (264.91,353.5) .. (268.5,353.5) .. controls (272.09,353.5) and (275,356.41) .. (275,360) .. controls (275,363.59) and (272.09,366.5) .. (268.5,366.5) .. controls (264.91,366.5) and (262,363.59) .. (262,360) -- cycle ;
				\draw   (109,178) .. controls (109,174.41) and (111.91,171.5) .. (115.5,171.5) .. controls (119.09,171.5) and (122,174.41) .. (122,178) .. controls (122,181.59) and (119.09,184.5) .. (115.5,184.5) .. controls (111.91,184.5) and (109,181.59) .. (109,178) -- cycle ;
				\draw   (363,83) .. controls (363,79.41) and (365.91,76.5) .. (369.5,76.5) .. controls (373.09,76.5) and (376,79.41) .. (376,83) .. controls (376,86.59) and (373.09,89.5) .. (369.5,89.5) .. controls (365.91,89.5) and (363,86.59) .. (363,83) -- cycle ;
				\draw   (396,120) .. controls (396,116.41) and (398.91,113.5) .. (402.5,113.5) .. controls (406.09,113.5) and (409,116.41) .. (409,120) .. controls (409,123.59) and (406.09,126.5) .. (402.5,126.5) .. controls (398.91,126.5) and (396,123.59) .. (396,120) -- cycle ;
				\draw   (417,173) .. controls (417,169.41) and (419.91,166.5) .. (423.5,166.5) .. controls (427.09,166.5) and (430,169.41) .. (430,173) .. controls (430,176.59) and (427.09,179.5) .. (423.5,179.5) .. controls (419.91,179.5) and (417,176.59) .. (417,173) -- cycle ;
				\draw   (159,88) .. controls (159,84.41) and (161.91,81.5) .. (165.5,81.5) .. controls (169.09,81.5) and (172,84.41) .. (172,88) .. controls (172,91.59) and (169.09,94.5) .. (165.5,94.5) .. controls (161.91,94.5) and (159,91.59) .. (159,88) -- cycle ;
				\draw   (127,127) .. controls (127,123.41) and (129.91,120.5) .. (133.5,120.5) .. controls (137.09,120.5) and (140,123.41) .. (140,127) .. controls (140,130.59) and (137.09,133.5) .. (133.5,133.5) .. controls (129.91,133.5) and (127,130.59) .. (127,127) -- cycle ;
				\draw   (205,58) .. controls (205,54.41) and (207.91,51.5) .. (211.5,51.5) .. controls (215.09,51.5) and (218,54.41) .. (218,58) .. controls (218,61.59) and (215.09,64.5) .. (211.5,64.5) .. controls (207.91,64.5) and (205,61.59) .. (205,58) -- cycle ;
				\draw   (316,350.5) .. controls (316,346.91) and (318.91,344) .. (322.5,344) .. controls (326.09,344) and (329,346.91) .. (329,350.5) .. controls (329,354.09) and (326.09,357) .. (322.5,357) .. controls (318.91,357) and (316,354.09) .. (316,350.5) -- cycle ;
				\draw   (363,325.5) .. controls (363,321.91) and (365.91,319) .. (369.5,319) .. controls (373.09,319) and (376,321.91) .. (376,325.5) .. controls (376,329.09) and (373.09,332) .. (369.5,332) .. controls (365.91,332) and (363,329.09) .. (363,325.5) -- cycle ;
				\draw   (400,283.5) .. controls (400,279.91) and (402.91,277) .. (406.5,277) .. controls (410.09,277) and (413,279.91) .. (413,283.5) .. controls (413,287.09) and (410.09,290) .. (406.5,290) .. controls (402.91,290) and (400,287.09) .. (400,283.5) -- cycle ;
				\draw   (204,348) .. controls (204,344.41) and (206.91,341.5) .. (210.5,341.5) .. controls (214.09,341.5) and (217,344.41) .. (217,348) .. controls (217,351.59) and (214.09,354.5) .. (210.5,354.5) .. controls (206.91,354.5) and (204,351.59) .. (204,348) -- cycle ;
				\draw   (161.5,320.5) .. controls (161.5,316.91) and (164.41,314) .. (168,314) .. controls (171.59,314) and (174.5,316.91) .. (174.5,320.5) .. controls (174.5,324.09) and (171.59,327) .. (168,327) .. controls (164.41,327) and (161.5,324.09) .. (161.5,320.5) -- cycle ;
				\draw   (127.5,280.5) .. controls (127.5,276.91) and (130.41,274) .. (134,274) .. controls (137.59,274) and (140.5,276.91) .. (140.5,280.5) .. controls (140.5,284.09) and (137.59,287) .. (134,287) .. controls (130.41,287) and (127.5,284.09) .. (127.5,280.5) -- cycle ;
				\draw   (110,233.5) .. controls (110,229.91) and (112.91,227) .. (116.5,227) .. controls (120.09,227) and (123,229.91) .. (123,233.5) .. controls (123,237.09) and (120.09,240) .. (116.5,240) .. controls (112.91,240) and (110,237.09) .. (110,233.5) -- cycle ;
				\draw   (317.5,57) .. controls (317.5,53.41) and (320.41,50.5) .. (324,50.5) .. controls (327.59,50.5) and (330.5,53.41) .. (330.5,57) .. controls (330.5,60.59) and (327.59,63.5) .. (324,63.5) .. controls (320.41,63.5) and (317.5,60.59) .. (317.5,57) -- cycle ;
				\draw    (270.5,53.5) -- (365.5,320) ;
				\draw    (268.83,52.83) -- (170,314) ;
				\draw    (266.83,51.83) -- (121.33,229) ;
				\draw    (273.5,52.5) -- (423,221.67) ;
				\draw    (277,47) -- (317.5,55) ;
				\draw    (217.67,56) -- (264,47) ;
				\draw    (329,60.33) -- (364.33,79) ;
				\draw    (327,62.5) -- (405.33,277.33) ;
				\draw    (322,63.5) -- (321.5,344) ;
				\draw    (319,61.5) -- (137.5,275.5) ;
				\draw    (318,57.5) -- (121,174) ;
				\draw    (374.33,87.33) -- (398.33,114.33) ;
				\draw    (369.5,89.5) -- (370.5,319) ;
				\draw    (365.33,87.83) -- (268.5,353.5) ;
				\draw    (363.33,85.5) -- (123,233.5) ;
				\draw    (363,83) -- (140,127) ;
				\draw    (405.67,125.33) -- (422.5,166.5) ;
				\draw    (399.83,125.5) -- (326.5,345) ;
				\draw    (396.83,123.5) -- (213,342.67) ;
				\draw    (396,121.67) -- (122,178.67) ;
				\draw    (396,118) -- (172,89) ;
				\draw    (424.5,179.5) -- (425.5,221) ;
				\draw    (418.33,177.67) -- (273.33,355.67) ;
				\draw    (417,175) -- (173.33,317.67) ;
				\draw    (417,172) -- (139.33,130.67) ;
				\draw    (418.33,169) -- (218,59.33) ;
				\draw    (423.5,233) -- (409,277.67) ;
				\draw    (420.67,231) -- (217,345.67) ;
				\draw    (420,227.5) -- (140.5,280.5) ;
				\draw    (421,224.5) -- (169.33,93) ;
				\draw    (170,83) -- (205,60.33) ;
				\draw    (136.5,120.5) -- (160.33,92.33) ;
				\draw    (116.5,171.5) -- (129.5,131.5) ;
				\draw    (116.5,227) -- (115.5,184.5) ;
				\draw    (129.33,275.33) -- (118.17,239.5) ;
				\draw    (162.67,317) -- (137.17,286.17) ;
				\draw    (204,347.33) -- (172.83,325.5) ;
				\draw    (262,361.33) -- (215.83,352.5) ;
				\draw    (317.17,353.83) -- (274,363) ;
				\draw    (364.33,330) -- (328.17,348.17) ;
				\draw    (403.67,288.67) -- (374.5,321.83) ;
				\draw    (400,285.5) -- (174.5,321.5) ;
				\draw    (400.33,281.33) -- (122.67,236) ;
				\draw    (402.33,278.33) -- (216.33,63) ;
				\draw    (363.33,326) -- (140.33,284) ;
				\draw    (363.5,323) -- (121.33,181.33) ;
				\draw    (316.5,348) -- (120,238.5) ;
				\draw    (318.5,345) -- (135,133.67) ;
				\draw    (262,358) -- (119,184) ;
				\draw    (265,355) -- (167.33,95) ;
				\draw    (205.33,344) -- (132.5,133.5) ;
				\draw    (209.5,341.5) -- (211.33,64) ;
				\draw    (165,315) -- (163.33,94) ;
				\draw    (134,274) -- (207.33,62.67) ;
				
				\draw (266,23) node [anchor=north west][inner sep=0.75pt]   [align=left] {{\tiny 0}};
				\draw (328,33) node [anchor=north west][inner sep=0.75pt]   [align=left] {{\tiny 1}};
				\draw (377.5,61) node [anchor=north west][inner sep=0.75pt]   [align=left] {{\tiny 2}};
				\draw (413.5,104) node [anchor=north west][inner sep=0.75pt]   [align=left] {{\tiny 3}};
				\draw (433.5,162.5) node [anchor=north west][inner sep=0.75pt]   [align=left] {{\tiny 4}};
				\draw (437,224) node [anchor=north west][inner sep=0.75pt]   [align=left] {{\tiny 5}};
				\draw (414,286) node [anchor=north west][inner sep=0.75pt]   [align=left] {{\tiny 6}};
				\draw (376,331) node [anchor=north west][inner sep=0.75pt]   [align=left] {{\tiny 7}};
				\draw (200.5,34.5) node [anchor=north west][inner sep=0.75pt]   [align=left] {{\tiny 17}};
				\draw (324,360) node [anchor=north west][inner sep=0.75pt]   [align=left] {{\tiny 8}};
				\draw (266,372) node [anchor=north west][inner sep=0.75pt]   [align=left] {{\tiny 9}};
				\draw (198.67,360) node [anchor=north west][inner sep=0.75pt]   [align=left] {{\tiny 10}};
				\draw (153.33,330) node [anchor=north west][inner sep=0.75pt]   [align=left] {{\tiny 11}};
				\draw (108,276) node [anchor=north west][inner sep=0.75pt]   [align=left] {{\tiny 12}};
				\draw (91,228) node [anchor=north west][inner sep=0.75pt]   [align=left] {{\tiny 13}};
				\draw (91,166.33) node [anchor=north west][inner sep=0.75pt]   [align=left] {{\tiny 14}};
				\draw (148,67.67) node [anchor=north west][inner sep=0.75pt]   [align=left] {{\tiny 16}};
				\draw (114.67,111) node [anchor=north west][inner sep=0.75pt]   [align=left] {{\tiny 15}};

			\end{tikzpicture}
			\caption{$\uc(18)$ }
			\label{fig:sub2}
		\end{subfigure}
		\caption{Two examples of periodic unitary Cayley graphs}
		\label{fig}
	\end{figure}

	In the literature, very few classes of  graphs are found that are periodic with respect to the Grover walks. Examples include complete graphs, complete bipartite graphs, strongly regular graphs, generalized Bethe trees, Hamming graphs, and Johnson graphs. However, the periodicity of  Cayley graphs with respect to the Grover walks has not been explored in the past. This paper considers unitary Cayley graphs, which are a particular type of Cayley graph. For example, the unitary Cayley graphs on $12$ and $18$ vertices, shown in Figure \ref{fig}, are periodic.

	\section{Perfect state transfer on unitary Cayley graphs}
	Let $G$ be a simple graph. A bijective function $f: V(G)\rightarrow V(G)$ is said to be an \emph{automorphism} of $G$ if, $uv\in E(G)$ if and only if $f(u)f(v)\in E(G)$ for any two vertices $u$ and $v$. A graph $G$ is said to be \emph{vertex transitive}, if for any two vertices $u$ and $v$ in $G$, there exists an automorphism $f$ of $G$ such that $f(u)=v$. Note that a vertex-transitive graph is regular. Therefore the discriminant of the graph is $\frac{1}{k}A$, where $k$ is the regularity of the graph. We find that periodicity is necessary for the occurrence of perfect state transfer in vertex-transitive graphs.
	\begin{theorem}\label{thm2}
		Let $G$ be a vertex-transitive graph. If $G$ exhibits perfect state transfer, then it is periodic.
	\end{theorem}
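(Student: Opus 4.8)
The plan is to combine the necessary condition of Theorem \ref{p1} with the symmetry of $G$ so as to control the \emph{entire} discriminant spectrum, and then feed the resulting information into the spectral mapping theorem (Lemma \ref{evu}).

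First I would record the basic setup. Since $G$ is vertex-transitive it is $k$-regular for some $k$, so by Lemma \ref{reg} we have $P=\tfrac1k A$ and $\spec_P(G)\subseteq[-1,1]$. Assume perfect state transfer occurs from a vertex $u$ to a vertex $v$ at some time $\tau$. Then Theorem \ref{p1} gives $T_\tau(\mu)=\pm1$ for every $\mu\in\Theta_P(u)$. The goal is to promote this from the support $\Theta_P(u)$ to all of $\spec_P(G)$.

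The key step is to show that for a vertex-transitive graph $\Theta_P(u)=\spec_P(G)$ for every vertex $u$. Let $P=\sum_{r=1}^{m}\mu_r E_r$ be the spectral decomposition. For an automorphism $\sigma$ of $G$, let $Q_\sigma$ be the associated permutation matrix, so that $Q_\sigma\eu=\mathbf{e}_{\sigma(u)}$ and $Q_\sigma A=A Q_\sigma$; hence $Q_\sigma$ commutes with $P$, and therefore with each eigenprojector $E_r$ (by \eqref{sd}, each $E_r$ is a polynomial in $P$). Consequently $E_r\mathbf{e}_{\sigma(u)}=E_r Q_\sigma\eu=Q_\sigma E_r\eu$, and since $Q_\sigma$ is invertible, $E_r\eu\neq0$ if and only if $E_r\mathbf{e}_{\sigma(u)}\neq0$. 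Vertex-transitivity lets us carry $u$ to an arbitrary vertex, so the condition $E_r\eu\neq0$ is independent of $u$; as $E_r\neq0$ forces $E_r\eu\neq0$ for at least one $u$, we conclude $\mu_r\in\Theta_P(u)$ for \emph{every} $u$. Thus $\Theta_P(u)=\spec_P(G)$, and the two steps together yield $T_\tau(\mu)=\pm1$ for all $\mu\in\spec_P(G)$.

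To finish, I would translate this into a statement about $\spec_U(G)$. Writing $\mu=\cos\theta$ with $\theta=\arccos(\mu)$, relation \eqref{chb} gives $\cos(\tau\theta)=T_\tau(\mu)=\pm1$, so $\tau\theta\in\pi\Zl$ and hence $\theta$ is a rational multiple of $\pi$. By Lemma \ref{evu} every eigenvalue of $U$ is either $\pm1$ or of the form $e^{\pm\iu\theta}$ arising from such a $\mu$; in the latter case $\bigl(e^{\pm\iu\theta}\bigr)^{2\tau}=e^{\pm2\iu\tau\theta}=1$, and the eigenvalues $\pm1$ are roots of unity as well. Therefore $\lambda^{2\tau}=1$ for every $\lambda\in\spec_U(G)$, and Lemma \ref{period} shows that $G$ is periodic (with period dividing $2\tau$). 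The main obstacle is the middle paragraph: identifying the eigenvalue support of a single vertex with the full discriminant spectrum, which is precisely where vertex-transitivity (rather than mere regularity) is needed, since Theorem \ref{p1} alone constrains only $\Theta_P(u)$.
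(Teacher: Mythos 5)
Your proof is correct and takes essentially the same route as the paper: establish $\Theta_P(u)=\spec_P(G)$ for every vertex $u$ via vertex-transitivity, apply Theorem \ref{p1} (with Lemma \ref{ch}) to force $T_\tau(\mu)=\pm1$ on the whole discriminant spectrum, and then use Lemma \ref{evu} to conclude that every eigenvalue of $U$ is a root of unity. The only differences are presentational: you spell out the commuting permutation-matrix argument that the paper leaves implicit in the sentence ``$E_r\eu=\mathbf{0}$ gives that $E_r\ev=\mathbf{0}$ for each vertex $v$,'' and you explicitly account for the extra $\pm1$ eigenvalues coming from the spectral mapping theorem.
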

	\begin{proof}
		Let  $G$ be a vertex-transitive graph. Let $E_r$ be the eigenprojectors of the discriminant $P$ of $G$ corresponding to an eigenvalue $\mu_r$ of $P$. Suppose $E_r\eu =\textbf{0}$ for some vertex $u$ of $G$. Since $G$ is vertex-transitive, $E_r\eu=\textbf{0}$ gives that $E_r\ev=\textbf{0}$ for each vertex $v$ of $G$. Therefore $E_r=\textbf{0}$, which is a contradiction. Thus $\Theta_P(u)=\spec_P(G)$ for any vertex $u$ of $G$. Let $\mu \in \spec_P(G)$. Since the graph exhibits perfect state transfer, using Theorem \ref{p1} and Lemma \ref{ch}, we have $\mu=\cos\frac{s}{\tau}\pi$ for some positive integers $s$ and $\tau$. Therefore, the eigenvalues of the time evolution matrix $U$ are of the form $e^{ \pm \frac{\iu s \pi}{\tau}}$ for some positive integers $s$ and $\tau$. Thus, there is a positive integer $k$ such that $U^k=I$. Hence $G$ is periodic.
	\end{proof}

	The next corollary is obtained from Theorem \ref{main1} and Theorem \ref{thm2}. 
	\begin{corollary}\label{coro1}
		If the unitary Cayley graph $\uc(n)$ exhibits perfect state transfer, then $n=2^\alpha 3^\beta$, for some non-negative integers $\alpha$ and $\beta$ with $\alpha +\beta \neq 0 $.
	\end{corollary}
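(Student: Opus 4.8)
The plan is to chain together the two structural results already established, so the argument is essentially a one-step deduction. The only preliminary observation needed is that $\uc(n)$ is vertex-transitive. This is immediate from its definition as a Cayley graph: for each $g\in\Zl_n$ the translation $x\mapsto x+g$ is an automorphism of $\cay(\Zl_n,U_n)$ (since $u-v\in U_n$ if and only if $(u+g)-(v+g)\in U_n$), and these translations clearly act transitively on the vertex set $\Zl_n$. Thus $\uc(n)$ satisfies the hypothesis of Theorem \ref{thm2}.

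First I would invoke Theorem \ref{thm2}: assuming that $\uc(n)$ exhibits perfect state transfer, vertex-transitivity forces $\uc(n)$ to be periodic. Then I would feed this conclusion into the classification of periodic unitary Cayley graphs. By Theorem \ref{main1}, $\uc(n)$ is periodic precisely when $n=2^\alpha 3^\beta$ for non-negative integers $\alpha,\beta$ with $\alpha+\beta\neq 0$. Combining the two, perfect state transfer on $\uc(n)$ implies periodicity, which in turn implies $n=2^\alpha 3^\beta$ with $\alpha+\beta\neq 0$, as claimed.

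There is no genuine obstacle here, since the corollary is a direct composition of Theorem \ref{thm2} and Theorem \ref{main1}; the substantive work was done in establishing those two theorems. The only point that deserves explicit mention is the verification that $\uc(n)$ is vertex-transitive, which is what licenses the application of Theorem \ref{thm2}. Everything else is formal implication, so I would keep the proof to two or three sentences.
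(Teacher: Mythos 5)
Your proposal is correct and follows exactly the paper's route: the paper derives this corollary directly from Theorem \ref{thm2} (periodicity is necessary for perfect state transfer on vertex-transitive graphs) combined with Theorem \ref{main1} (classification of periodic unitary Cayley graphs). The only difference is that you explicitly verify vertex-transitivity of $\uc(n)$ via translations, a standard fact about Cayley graphs that the paper leaves implicit.
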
	
	
	\begin{theorem}\label{thl}
	Let $\mu_0,\mu_1,\hdots,\mu_{n-1}$  be the  eigenvalues of the discriminant of a circulant graph $\cay(\Zl_n, C)$, where $\mu_j=\frac{1}{|C|}\sum_{s\in C} e^{\frac{2\pi j s \iu }{n}}$ for $j\in\{0,\hdots, n-1\}$. Then prefect state transfer occurs in $\cay(\Zl_n, C)$ from a vertex $u$ to another vertex $v$ at time $\tau$ if and only if all of the following conditions hold.
		\begin{enumerate}[label=(\roman*)]
			\item $n$ is even and $u-v=\frac{n}{2}$.
			\item $T_\tau(\mu_j)=\pm1$ for $j\in\{0,\hdots,n-1\}$, where $T_n(x)$ is the Chebyshev polynomial of the first kind.
			\item $T_\tau(\mu_j)\neq T_\tau(\mu_{j+1})$ for $j\in\{0,\hdots,n-2\}.$
		\end{enumerate}
	\end{theorem}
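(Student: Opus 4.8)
The plan is to convert the occurrence of perfect state transfer into a single statement about one entry of the matrix $T_\tau(P)$, and then to extract the three numbered conditions from the circulant structure via a short operator-norm argument. Since $\cay(\Zl_n,C)$ is $|C|$-regular, Lemma \ref{reg} gives $P=\frac{1}{|C|}A$, so $P$ is a real symmetric circulant matrix with eigenvalues $\mu_j$ and eigenvectors $\mathbf{v}_j$ as in \eqref{evc}. Combining Lemma \ref{st11} and Lemma \ref{ch11}, perfect state transfer from $u$ to $v$ at time $\tau$ is equivalent to
$$1=|\ip{U^\tau d^*\eu}{d^*\ev}|=|\ip{dU^\tau d^*\eu}{\ev}|=|\ip{T_\tau(P)\eu}{\ev}|=|(T_\tau(P))_{vu}|.$$
Thus the whole problem reduces to deciding when the $(v,u)$-entry of $T_\tau(P)$ has modulus $1$.

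The crux is an equality-in-Cauchy--Schwarz observation. The matrix $M:=T_\tau(P)$ is real symmetric with eigenvalues $T_\tau(\mu_j)$; by Lemma \ref{reg} each $\mu_j\in[-1,1]$ and by Lemma \ref{ch} each $|T_\tau(\mu_j)|\le 1$, so $M$ is a contraction. Hence $|(M)_{vu}|=|\ip{M\eu}{\ev}|\le \|M\eu\|\le 1$, and equality $|(M)_{vu}|=1$ forces $M\eu=\pm\ev$; that is, the $u$-th column of $M$ must be a signed standard basis vector. I expect this step---showing that a modulus-one off-diagonal entry of a symmetric contraction rigidly determines an entire column---to be the main obstacle, after which everything is bookkeeping.

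Since $P$ is circulant, so is $M=T_\tau(P)$, and a circulant matrix whose $u$-th column equals $\pm\ev$ is exactly $\pm Q^{\,v-u}$, where $Q$ is the cyclic shift satisfying $Q\mathbf{v}_j=\w^{\,j}\mathbf{v}_j$. Symmetry of $M$ then forces $Q^{\,v-u}=Q^{-(v-u)}$, i.e. $2(v-u)\equiv 0\Mod n$; as $u\neq v$, this yields that $n$ is even and $v-u=\frac{n}{2}$ (equivalently $u-v=\frac n2$), which is condition (i). Evaluating eigenvalues on each $\mathbf{v}_j$ and using $Q^{n/2}\mathbf{v}_j=(-1)^j\mathbf{v}_j$ gives $T_\tau(\mu_j)=\pm(-1)^j$ with one global sign, and conditions (ii) and (iii) are then immediate: every value equals $\pm1$, and consecutive values differ in sign.

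For the converse I would assume (i)--(iii). Condition (ii) places each $T_\tau(\mu_j)$ in $\{+1,-1\}$, and (iii) forces the sequence to strictly alternate, so $T_\tau(\mu_j)=\epsilon(-1)^j$ for a fixed $\epsilon\in\{+1,-1\}$. Because $Q^{n/2}$ has eigenvalue $(-1)^j$ on $\mathbf{v}_j$, the matrices $T_\tau(P)$ and $\epsilon Q^{n/2}$ share eigenvalues on the common Fourier eigenbasis, whence $T_\tau(P)=\epsilon Q^{n/2}$. Its $(v,u)$-entry, with $v-u=\frac n2$ from (i), equals $\pm1$, so $|(T_\tau(P))_{vu}|=1$ and perfect state transfer occurs by the reduction of the first paragraph.
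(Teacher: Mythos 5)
Your proposal is correct, but it takes a genuinely different route from the paper's. Both arguments begin with the same reduction, using Lemmas \ref{st11} and \ref{ch11} to translate perfect state transfer into $|T_\tau(P)_{uv}|=1$. From there the paper proceeds computationally: it first secures condition (ii) by combining vertex-transitivity with the external necessary condition of Theorem \ref{p1}, then expands $T_\tau(P)_{uv}=\frac{1}{n}\sum_{j} e^{\iu k_j\pi}\omega_n^{j(u-v)}$ in the Fourier basis and characterizes equality in the triangle inequality, arriving at the congruence $\frac{k_{j+1}-k_j}{2}+\frac{1}{n}(u-v)\in\Zl$ of \eqref{eqq}; conditions (i) and (iii) are then extracted using the conjugate-symmetry $\mu_j=\mu_{n-j}$ of the eigenvalue list, which is what rules out odd $n$. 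You bypass Theorem \ref{p1} and the eigenvalue-support machinery entirely: your rigidity step (a real symmetric contraction with a unimodular entry must satisfy $M\eu=\pm\ev$, by equality in Cauchy--Schwarz together with the operator-norm bound) is sound, and combined with the circulant algebra it pins down the matrix identity $T_\tau(P)=\pm Q^{v-u}$; transpose-symmetry of $T_\tau(P)$ (rather than the paper's symmetry of the eigenvalue sequence) then forces $v-u=\frac{n}{2}$, and all three conditions fall out at once by comparing eigenvalues of $T_\tau(P)$ and $\pm Q^{n/2}$ on the common Fourier basis, with the converse being the same comparison run backwards. What your approach buys is self-containedness and a sharper structural statement --- perfect state transfer at time $\tau$ is equivalent to the single identity $T_\tau(P)=\pm Q^{n/2}$, which makes the antipodality in (i) conceptually transparent; what the paper's approach buys is that it stays within the established framework of \cite{pstdc}, with the intermediate condition \eqref{eqq} doing the bookkeeping explicitly, at the cost of leaning on Theorem \ref{p1} to obtain condition (ii).
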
	
	\begin{proof}
		First we prove that, if Condition $(ii)$ holds, then perfect state transfer occurs in $\cay(\Zl_n,C)$  from a vertex $u$ to another vertex $v$ at time $\tau$ if and only if \begin{equation} \label{eqq}
			\frac{k_{j+1}-k_j}{2} + \frac{1}{n}(u-v)\in \Zl ~~\text{for}~~ j\in\{0,\hdots, n-2\},
		\end{equation} 
		where $k_j\in\{0,1\}$ is given by $T_{\tau}(\mu_j)=e^{\iu k_j\pi}$ for $j\in\{0,\hdots,n-1\}$.
		
		Assume that Condition $(ii)$ holds.  For $0\leq j\leq n-1$, define $k_j\in \{0,1\}$ such that $T_{\tau}(\mu_j)=e^{\iu k_j\pi}$. Now, perfect state transfer occurs in $\cay(\Zl_n,C)$  from a vertex $u$ to another vertex $v$ at time $\tau$ if and only if 
		\begin{align}
			1&=|\ip{U^\tau d^*\eu}{d^* \ev}| \nonumber \tag*{(by Lemma \ref{st11})}\\
			&=|\ip{dU^\tau d^*\eu}{\ev}| \nonumber\\
			&=|\ip{T_{\tau}(P)\eu}{\ev}| \nonumber\tag*{(by Lemma \ref{ch11})}\\
			&= |T_{\tau}(P)_{uv}|. \nonumber
		\end{align}
		Writing $|C|=r$, we have $P=\frac{1}{r}A$. Therefore from \eqref{evc}, the eigenvectors $\textbf{v}_j$  of $P$ corresponding to $\mu_j$ is given by  $\mathbf{v}_j=\left[1~~ \omega_n^j~~\omega_n^{2j}~~ ...~~ \omega_n^{(n-1)j}\right]^t$ for $j\in\{0, \hdots , n-1\}$. Thus, the spectral decomposition of $P$ is given by $$P=\frac{1}{n} \sum_{j=0}^{n-1} \mu_j  \mathbf{v}_j \mathbf{v}_j^*.$$
		Therefore 
  \begin{align*}
      T_{\tau}(P)&=\frac{1}{n} \sum_{j=0}^{n-1} T_{\tau}(\mu_j)  \mathbf{v}_j \mathbf{v}_j^* \tag*{(by  \eqref{sd})}\\
      &=\frac{1}{n} \sum_{j=0}^{n-1} e^{\iu k_j\pi}  \mathbf{v}_j \mathbf{v}_j^*.
  \end{align*}
   Hence the $uv$-th entry of $T_\tau(P)$ is given by
		\begin{equation}\label{eq}
			T_{\tau}(P)_{uv}=\frac{1}{n} \sum_{j=0}^{n-1} e^{\iu k_j\pi } \omega_n^{j(u-v)}.
		\end{equation}
		From the triangle inequality, the magnitude of the right side of \eqref{eq} is at most $1$. Therefore $|T_\tau(P)_{uv}|=1$ if and only if the summands in the right side of Equation \eqref{eq} are equal to each other. Thus $|T_\tau(P)_{uv}|=1$ if and only if \begin{equation*}
			e ^{\iu k_o \pi  }=e^{\iu k_1\pi  + \iu \frac{2\pi}{n}(u-v) }=\cdots  = e^{\iu k_{n-1}\pi + \iu \frac{2(n-1)\pi}{n}(u-v)}.\end{equation*}
		Now from $$e^{\iu k_j\pi + \iu\frac{2j\pi}{n}(u-v)}=e^{\iu k_{j+1}\pi+\iu\frac{2(j+1)\pi}{n}(u-v)}~\text{for}~j\in\{0,\hdots,n-2\},$$ we have 
  \begin{equation*} 
			\frac{k_{j+1}-k_j}{2} + \frac{1}{n}(u-v)\in \Zl ~~\text{for}~~ j\in\{0,\hdots, n-2\}.
		\end{equation*} 
		Thus if Condition $(ii)$ holds, then perfect state transfer occurs in $\cay(\Zl_n,C)$  from  $u$ to $v$ at time $\tau$  if and only if \eqref{eqq} holds.
		
		Now, assume that perfect state transfer occurs in $\cay(\Zl_n,C)$ from a vertex $u$ to another vertex $v$ at time $\tau$. Since  $\cay(\Zl_n,C)$ is a vertex-transitive graph, as in the proof of Theorem \ref{thm2}, we find that  $\Theta_P(u)=\spec_P(\cay(\Zl_n,C))$ for each vertex $u$. Therefore by Theorem \ref{p1}, we have $T_\tau(\mu_j)=\pm1$ for each $j\in\{0,\hdots,n-1\}$. Thus Condition $(ii)$ holds.

		 Observe that the eigenvalues $\mu_j$  satisfy 
		 $$\mu_j=\frac{1}{r}\sum_{s\in C}\omega_n^{js}=\frac{1}{r}\sum_{s\in C}\omega_n^{(n-j)s}=\mu_{n-j}~\text{for}~j\in\{1,\hdots, n-1\}.$$
		  If $n$ is  odd, then $\mu_\ell=\mu_{\ell+1}$ for $\ell=\frac{n-1}{2}$. This implies $T_{\tau}(\mu_\ell)=T_\tau(\mu_{\ell+1})$, and so $k_\ell=k_{\ell+1}$. Therefore from \eqref{eqq}, $\frac{1}{n}(u-v)\in \Zl$, which is a contradiction. So, $n$ must be even and $k_{j+1}\neq k_j$ for each  $j\in\{0,\hdots,n-2\}$. Since $k_j\in\{0,1\}$, we have $u-v=\pm \frac{n}{2} +kn$ for some integer $k$. Note that $u,v\in \Zl_n$ and $u\neq v$. Therefore $u-v= \frac{n}{2}$. Thus Condition $(i)$ holds.
		
		Finally suppose that $T_\tau(\mu_j)=T_\tau(\mu_{j+1})$ for some $j$. Then $k_j=k_{j+1}$, and so $\frac{1}{n}(u-v)\in \Zl$,  a contradiction. Therefore $T_\tau(\mu_j)\neq T_\tau(\mu_{j+1})$ for each $j\in\{0,\hdots,n-2\}.$ Thus Condition $(iii)$ holds.

		Conversely, assume that conditions $(i)$ to $(iii)$  hold.  Therefore $n$ is even and $u-v=\frac{n}{2}$.
		By Condition $(iii)$, $T_\tau(\mu_j)\neq T_\tau(\mu_{j+1})$, and so $k_{j}\neq k_{j+1}$ for each $j\in\{0,\hdots,n-2\}.$ Since $k_j\in\{0,1\}$, we find that $k_{j+1}-k_j\in\{\pm 1\}$. Thus Condition \eqref{eqq} holds. This, along with Condition $(ii)$, imply that perfect state transfer occurs in  $\cay(\Zl_n, C)$ from $u$ to $v$ at time $\tau$
\end{proof}
	\begin{corollary}\label{thl1}
		Let $\cay(\Zl_n,C)$ be a circulant graph with adjacency eigenvalues $\lambda_j=\sum_{s\in C} \omega_n^{js}$ for $j\in\{0,\hdots, n-1\}$, where $\omega_n=e^{\frac{2\pi \iu}{n}}$. If $\ld_j=\ld_{j+1}$ for some $j\in\{0, \hdots , n-2\}$, then perfect state transfer does not occur in $\cay(\Zl_n,C)$. 
	\end{corollary}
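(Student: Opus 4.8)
The plan is to obtain this as an immediate contrapositive consequence of Theorem \ref{thl}. The first step is to record the exact relationship between the adjacency eigenvalues $\lambda_j$ and the discriminant eigenvalues $\mu_j$. By the definition given in the statement of Theorem \ref{thl}, we have $\mu_j=\frac{1}{|C|}\sum_{s\in C}\omega_n^{js}$, while $\lambda_j=\sum_{s\in C}\omega_n^{js}$; hence $\mu_j=\frac{\lambda_j}{|C|}$ for every $j$. This normalization is the only computational point to be careful about. It follows directly that the hypothesis $\lambda_j=\lambda_{j+1}$ for some $j\in\{0,\hdots,n-2\}$ forces $\mu_j=\mu_{j+1}$ for that same index $j$.

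With this in hand, I would argue by contradiction. Suppose, for the sake of contradiction, that perfect state transfer does occur in $\cay(\Zl_n,C)$, say from a vertex $u$ to a vertex $v$ at some time $\tau$. Then all three conditions of Theorem \ref{thl} must hold. In particular, Condition $(iii)$ asserts that $T_\tau(\mu_j)\neq T_\tau(\mu_{j+1})$ for every $j\in\{0,\hdots,n-2\}$. But since $T_\tau$ is a polynomial and $\mu_j=\mu_{j+1}$, evaluating the same polynomial at equal arguments yields $T_\tau(\mu_j)=T_\tau(\mu_{j+1})$, which directly contradicts Condition $(iii)$. Therefore no such $u$, $v$, $\tau$ can exist, and perfect state transfer does not occur in $\cay(\Zl_n,C)$.

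I do not expect any genuine obstacle here: the entire substantive content already resides in Theorem \ref{thl}, and the corollary is simply the observation that a coincidence of consecutive adjacency eigenvalues violates the necessary Condition $(iii)$. The only thing worth stating explicitly is that the coincidence $\lambda_j=\lambda_{j+1}$ passes to the discriminant eigenvalues because $P=\frac{1}{|C|}A$ for a regular graph (Lemma \ref{reg}), so that the two spectra differ only by the global scaling factor $\frac{1}{|C|}$ and the equality of neighbouring eigenvalues is preserved.
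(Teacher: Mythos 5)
Your proposal is correct and follows exactly the paper's own argument: the scaling $\mu_j=\frac{\lambda_j}{|C|}$ turns the hypothesis $\lambda_j=\lambda_{j+1}$ into $\mu_j=\mu_{j+1}$, hence $T_\tau(\mu_j)=T_\tau(\mu_{j+1})$, which violates Condition $(iii)$ of Theorem \ref{thl}. The only cosmetic difference is that you phrase it as an explicit contradiction while the paper states it directly, which is immaterial.
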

	\begin{proof}
		If $\ld_j=\ld_{j+1}$ for some $j\in\{0,\hdots,n-2\}$, then $\mu_j=\mu_{j+1}$. Hence $T_\tau(\mu_j)=T_\tau(\mu_{j+1})$. Therefore by Theorem \ref{thl}, the result follows.
	\end{proof}
	\begin{lema}\label{cycle}
		A cycle on $n$ vertices exhibit perfect state transfer if and only if $n$ is even.
	\end{lema}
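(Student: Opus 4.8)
The plan is to realize the cycle $C_n$ as the circulant graph $\cay(\Zl_n, \{1,n-1\})$ and then reduce everything to Theorem \ref{thl}. First I would record that the connection set is $C=\{1,n-1\}$, so $|C|=2$, and that the discriminant eigenvalues are
\[
\mu_j=\frac{1}{2}\left(\omega_n^{j}+\omega_n^{-j}\right)=\cos\frac{2\pi j}{n},\qquad j\in\{0,\hdots,n-1\}.
\]
This identification is what lets the general circulant criterion do all the work.

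For the forward implication, suppose $C_n$ exhibits perfect state transfer. Since $C_n$ is a circulant graph, Theorem \ref{thl} applies, and its Condition $(i)$ forces $n$ to be even. This settles the ``only if'' direction immediately, with no further computation.

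For the converse, assume $n$ is even and write $n=2m$. I would propose the transfer time $\tau=m=\frac{n}{2}$ and verify the three conditions of Theorem \ref{thl}. Condition $(i)$ holds by taking $u-v=\frac{n}{2}$. For Conditions $(ii)$ and $(iii)$, the key computation uses the identity \eqref{chb}: since $\mu_j=\cos\frac{\pi j}{m}$, we obtain
\[
T_m(\mu_j)=\cos\left(m\cdot\frac{\pi j}{m}\right)=\cos(\pi j)=(-1)^j.
\]
Hence $T_m(\mu_j)=\pm1$ for every $j$, which is Condition $(ii)$, and $T_m(\mu_j)=(-1)^j\neq(-1)^{j+1}=T_m(\mu_{j+1})$ for each $j\in\{0,\hdots,n-2\}$, which is Condition $(iii)$. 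By Theorem \ref{thl}, perfect state transfer then occurs from any vertex $u$ to its antipodal vertex $u+\frac{n}{2}$ at time $\frac{n}{2}$.

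I do not expect a genuine obstacle here; the argument is short once the cycle is written as a circulant graph. The single idea worth isolating is the choice $\tau=\frac{n}{2}$, which is precisely what makes $T_\tau(\mu_j)$ collapse to the alternating sign $(-1)^j$, simultaneously delivering both the ``$\pm1$'' requirement of Condition $(ii)$ and the ``consecutive values differ'' requirement of Condition $(iii)$.
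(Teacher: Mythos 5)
Your proposal is correct and follows essentially the same route as the paper: both realize $C_n$ as the circulant graph $\cay(\Zl_n,\{\pm 1\})$, invoke Theorem \ref{thl} to rule out odd $n$, and for $n=2m$ compute $\mu_j=\cos\frac{j\pi}{m}$ so that $T_m(\mu_j)=\cos(j\pi)=(-1)^j$, verifying the conditions of the theorem at time $\tau=\frac{n}{2}$. Your write-up is only marginally more explicit in spelling out that the alternating sign $(-1)^j$ simultaneously yields Conditions $(ii)$ and $(iii)$.
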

	\begin{proof}
		A cycle on $n$ vertices is the circulant graph $\cay(\Zl_n, \{\pm 1\})$. Therefore, for $n$ odd, the cycle does not exhibit perfect state transfer. So, let $n=2m$ for some positive integer $m$ with $m\geq 2$. Then  the eigenvalue $\mu_j$ of the discriminant $P$ of $C_n$ is given by $$\mu_j=\frac{1}{2}(\omega_n^{j}+\omega_n^{-j})=\cos\left(\frac{2j\pi}{n}\right)=\cos\left(\frac{j\pi}{m} \right),$$ for $j\in\{0,\hdots, n-1\}$. Thus by \eqref{chb}, we have $T_m(\mu_j)=\cos(j\pi)$ for $j\in\{0,\hdots, n-1\}$. Therefore by Theorem \ref{thl}, perfect state transfer occurs in $C_n$ from the vertex $u$ to $u+\frac{n}{2}$ at time $\frac{n}{2}$ for even positive integers $n$ with $n\geq 4$.
	\end{proof}
	\begin{lema}\label{complete}
		The only complete graphs $K_n$ exhibiting perfect state transfer is $K_2$.
	\end{lema}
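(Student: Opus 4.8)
The plan is to realize $K_n$ as the circulant graph $\cay(\Zl_n, C)$ with connection set $C=\{1,2,\hdots,n-1\}$, so that $K_n$ is $(n-1)$-regular and Theorem \ref{thl} together with its Corollary \ref{thl1} applies directly. The first step is to compute the adjacency eigenvalues $\ld_j=\sum_{s=1}^{n-1}\omega_n^{js}$, where $\omega_n=e^{\frac{2\pi\iu}{n}}$. For $j=0$ this gives $\ld_0=n-1$, while for $1\leq j\leq n-1$ the identity $\sum_{s=0}^{n-1}\omega_n^{js}=0$ forces $\ld_j=\sum_{s=1}^{n-1}\omega_n^{js}=-1$. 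Hence the adjacency spectrum of $K_n$ is $\{n-1,\underbrace{-1,\hdots,-1}_{n-1}\}$, with every nonprincipal eigenvalue equal to $-1$.

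The key observation is that these repeated eigenvalues occupy consecutive indices. For $n\geq 3$ we have $\ld_1=\ld_2=-1$ with $1\in\{0,\hdots,n-2\}$, so Corollary \ref{thl1} immediately yields that no perfect state transfer occurs in $K_n$. This disposes of every complete graph on at least three vertices in a single line, with no further computation.

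It then remains to treat $K_2$ separately, since the preceding argument requires two distinct consecutive indices carrying equal eigenvalues, which is unavailable when $n=2$. Here $K_2=\cay(\Zl_2,\{1\})$ is $1$-regular, so $P=A$ has eigenvalues $\mu_0=1$ and $\mu_1=-1$, and I would verify the three conditions of Theorem \ref{thl} by hand: condition $(i)$ holds with $u-v=1=\frac{n}{2}$; condition $(ii)$ holds because $T_\tau(1)=1$ and $T_\tau(-1)=(-1)^\tau$ are both $\pm1$ for every $\tau$; and condition $(iii)$ reduces to $T_\tau(\mu_0)\neq T_\tau(\mu_1)$, i.e. $(-1)^\tau=-1$, which holds at $\tau=1$. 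Thus $K_2$ exhibits perfect state transfer at time $\tau=1$, completing the characterization.

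The argument is essentially a direct application of the circulant criterion, so I do not anticipate a serious obstacle; the only subtlety worth flagging is that the degenerate case $n=2$ escapes Corollary \ref{thl1} and must be checked against the full conditions of Theorem \ref{thl}, rather than dismissed by the eigenvalue-coincidence shortcut used for $n\geq 3$.
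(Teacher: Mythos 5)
Your proof is correct, but it takes a genuinely different route from the paper's. The paper reduces to small cases via its periodicity machinery: since $\spec_A(K_n)=\{n-1,-1\}$, Theorem \ref{thm1} shows $K_n$ is periodic only for $n=2$ or $n=3$; Theorem \ref{thm2} (perfect state transfer on a vertex-transitive graph forces periodicity) then eliminates every $n\geq 4$, and Theorem \ref{thl} finishes by ruling out $n=3$ through the parity condition $(i)$ and confirming $K_2$. You bypass Section 3 entirely: the computation $\ld_0=n-1$ and $\ld_j=-1$ for $1\leq j\leq n-1$ shows that for every $n\geq 3$ the equal eigenvalues $\ld_1=\ld_2$ sit at consecutive indices, so Corollary \ref{thl1} kills all cases except $n=2$ in one stroke, after which you verify the three conditions of Theorem \ref{thl} by hand for $K_2$ (with transfer at $\tau=1$). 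Your argument is more self-contained within the circulant framework of Section 4 and is the more elementary of the two; it also makes the affirmative half of the statement (that $K_2$ really does exhibit perfect state transfer) fully explicit, which the paper leaves implicit in its citation of Theorem \ref{thl}. The paper's argument, at the cost of importing Theorems \ref{thm1} and \ref{thm2}, illustrates the periodicity-as-obstruction theme that organizes the whole paper, and the same two-theorem template recycles directly in its treatment of $\uc(n)$; your eigenvalue-coincidence shortcut is specific to graphs with highly degenerate spectra but is exactly the right tool here.
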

	\begin{proof}
		Note that $\spec_A(K_n)=\{ n-1,-1\}$. Therefore by Theorem \ref{thm1}, $K_n$ is periodic if and only if $n=2$ or $n=3$. Also $K_n=\cay(\Zl_n, \{1,\hdots,n-1\})$. Hence by Theorem \ref{thm2} and Theorem \ref{thl}, the result follows.
	\end{proof}
	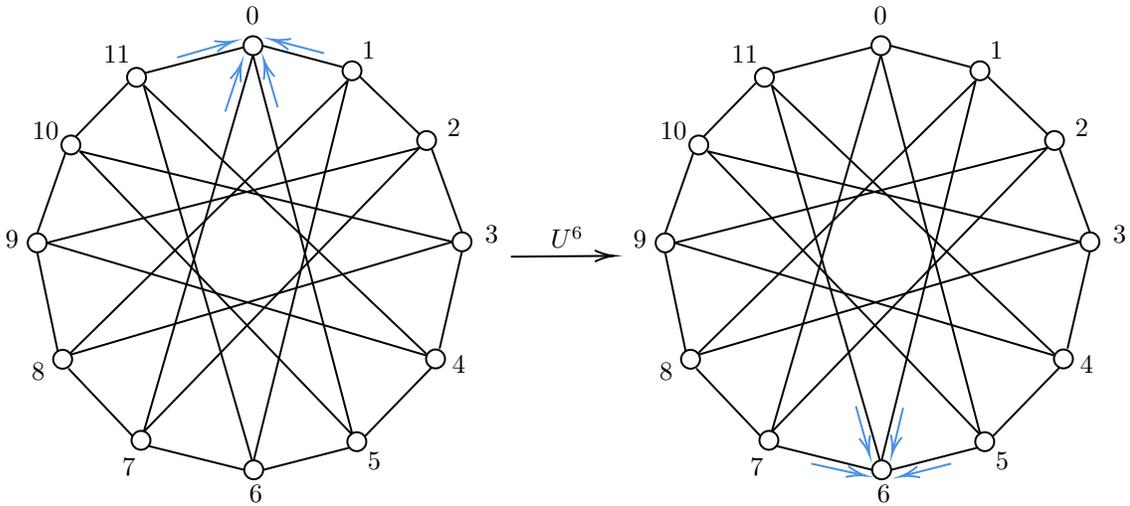
\begin{figure}[h!]
		\centering

		\tikzset{every picture/.style={line width=0.75pt}} 
		
		\begin{tikzpicture}[x=0.65pt,y=0.65pt,yscale=-1,xscale=1]
			
			\draw   (36.93,114.43) .. controls (36.93,111.4) and (39.4,108.93) .. (42.43,108.93) .. controls (45.47,108.93) and (47.93,111.4) .. (47.93,114.43) .. controls (47.93,117.47) and (45.47,119.93) .. (42.43,119.93) .. controls (39.4,119.93) and (36.93,117.47) .. (36.93,114.43) -- cycle ;
			\draw   (75.13,75.03) .. controls (75.13,72) and (77.6,69.53) .. (80.63,69.53) .. controls (83.67,69.53) and (86.13,72) .. (86.13,75.03) .. controls (86.13,78.07) and (83.67,80.53) .. (80.63,80.53) .. controls (77.6,80.53) and (75.13,78.07) .. (75.13,75.03) -- cycle ;
			\draw   (142.9,56.6) .. controls (142.9,53.56) and (145.36,51.1) .. (148.4,51.1) .. controls (151.44,51.1) and (153.9,53.56) .. (153.9,56.6) .. controls (153.9,59.64) and (151.44,62.1) .. (148.4,62.1) .. controls (145.36,62.1) and (142.9,59.64) .. (142.9,56.6) -- cycle ;
			\draw   (249.13,239.03) .. controls (249.13,236) and (251.6,233.53) .. (254.63,233.53) .. controls (257.67,233.53) and (260.13,236) .. (260.13,239.03) .. controls (260.13,242.07) and (257.67,244.53) .. (254.63,244.53) .. controls (251.6,244.53) and (249.13,242.07) .. (249.13,239.03) -- cycle ;
			\draw   (17.33,171.43) .. controls (17.33,168.4) and (19.8,165.93) .. (22.83,165.93) .. controls (25.87,165.93) and (28.33,168.4) .. (28.33,171.43) .. controls (28.33,174.47) and (25.87,176.93) .. (22.83,176.93) .. controls (19.8,176.93) and (17.33,174.47) .. (17.33,171.43) -- cycle ;
			\draw   (32.13,239.23) .. controls (32.13,236.2) and (34.6,233.73) .. (37.63,233.73) .. controls (40.67,233.73) and (43.13,236.2) .. (43.13,239.23) .. controls (43.13,242.27) and (40.67,244.73) .. (37.63,244.73) .. controls (34.6,244.73) and (32.13,242.27) .. (32.13,239.23) -- cycle ;
			\draw   (200.53,71.23) .. controls (200.53,68.2) and (203,65.73) .. (206.03,65.73) .. controls (209.07,65.73) and (211.53,68.2) .. (211.53,71.23) .. controls (211.53,74.27) and (209.07,76.73) .. (206.03,76.73) .. controls (203,76.73) and (200.53,74.27) .. (200.53,71.23) -- cycle ;
			\draw   (243.93,111.83) .. controls (243.93,108.8) and (246.4,106.33) .. (249.43,106.33) .. controls (252.47,106.33) and (254.93,108.8) .. (254.93,111.83) .. controls (254.93,114.87) and (252.47,117.33) .. (249.43,117.33) .. controls (246.4,117.33) and (243.93,114.87) .. (243.93,111.83) -- cycle ;
			\draw   (264.53,170.83) .. controls (264.53,167.8) and (267,165.33) .. (270.03,165.33) .. controls (273.07,165.33) and (275.53,167.8) .. (275.53,170.83) .. controls (275.53,173.87) and (273.07,176.33) .. (270.03,176.33) .. controls (267,176.33) and (264.53,173.87) .. (264.53,170.83) -- cycle ;
			\draw   (143.33,303.63) .. controls (143.33,300.6) and (145.8,298.13) .. (148.83,298.13) .. controls (151.87,298.13) and (154.33,300.6) .. (154.33,303.63) .. controls (154.33,306.67) and (151.87,309.13) .. (148.83,309.13) .. controls (145.8,309.13) and (143.33,306.67) .. (143.33,303.63) -- cycle ;
			\draw   (203.33,287.23) .. controls (203.33,284.2) and (205.8,281.73) .. (208.83,281.73) .. controls (211.87,281.73) and (214.33,284.2) .. (214.33,287.23) .. controls (214.33,290.27) and (211.87,292.73) .. (208.83,292.73) .. controls (205.8,292.73) and (203.33,290.27) .. (203.33,287.23) -- cycle ;
			\draw   (77.73,286.43) .. controls (77.73,283.4) and (80.2,280.93) .. (83.23,280.93) .. controls (86.27,280.93) and (88.73,283.4) .. (88.73,286.43) .. controls (88.73,289.47) and (86.27,291.93) .. (83.23,291.93) .. controls (80.2,291.93) and (77.73,289.47) .. (77.73,286.43) -- cycle ;
			\draw    (86,72.4) -- (142.9,57.6) ;
			\draw    (45.4,109.4) -- (75.6,78.4) ;
			\draw    (154.33,303.63) -- (204.23,290.83) ;
			\draw    (153.9,56.6) -- (200.4,69) ;
			\draw    (245.6,108.4) -- (210.5,74.6) ;
			\draw    (22.83,165.93) -- (39.3,119) ;
			\draw    (22.83,176.93) -- (34.8,234.6) ;
			\draw    (257,233) -- (270.03,176.33) ;
			\draw    (41,244.6) -- (78.2,283.8) ;
			\draw    (87.8,289.8) -- (143.33,303.63) ;
			\draw    (270.03,165.33) -- (252.5,117) ;
			\draw    (213.8,283.8) -- (252.2,244.2) ;
			\draw    (148.4,62.1) -- (206,282) ;
			\draw    (148.4,62.1) -- (84.8,281.6) ;
			\draw    (203.6,76.9) -- (148.83,298.13) ;
			\draw    (203.6,76.9) -- (42.2,235.6) ;
			\draw    (245.2,116.1) -- (84.8,281.6) ;
			\draw    (245.2,116.1) -- (28.33,171.43) ;
			\draw    (264.53,170.83) -- (42.2,236.6) ;
			\draw    (47.6,118.1) -- (264.53,170.83) ;
			\draw    (28.33,171.43) -- (249.2,237) ;
			\draw    (84.8,79.7) -- (249.2,236) ;
			\draw    (47.6,118.1) -- (198.84,275.59) -- (206,283) ;
			\draw    (84.8,79.7) -- (147.83,298.13) ;
			\draw   (402.33,114.43) .. controls (402.33,111.4) and (404.8,108.93) .. (407.83,108.93) .. controls (410.87,108.93) and (413.33,111.4) .. (413.33,114.43) .. controls (413.33,117.47) and (410.87,119.93) .. (407.83,119.93) .. controls (404.8,119.93) and (402.33,117.47) .. (402.33,114.43) -- cycle ;
			\draw   (440.53,75.03) .. controls (440.53,72) and (443,69.53) .. (446.03,69.53) .. controls (449.07,69.53) and (451.53,72) .. (451.53,75.03) .. controls (451.53,78.07) and (449.07,80.53) .. (446.03,80.53) .. controls (443,80.53) and (440.53,78.07) .. (440.53,75.03) -- cycle ;
			\draw   (508.3,56.6) .. controls (508.3,53.56) and (510.76,51.1) .. (513.8,51.1) .. controls (516.84,51.1) and (519.3,53.56) .. (519.3,56.6) .. controls (519.3,59.64) and (516.84,62.1) .. (513.8,62.1) .. controls (510.76,62.1) and (508.3,59.64) .. (508.3,56.6) -- cycle ;
			\draw   (614.53,239.03) .. controls (614.53,236) and (617,233.53) .. (620.03,233.53) .. controls (623.07,233.53) and (625.53,236) .. (625.53,239.03) .. controls (625.53,242.07) and (623.07,244.53) .. (620.03,244.53) .. controls (617,244.53) and (614.53,242.07) .. (614.53,239.03) -- cycle ;
			\draw   (382.73,171.43) .. controls (382.73,168.4) and (385.2,165.93) .. (388.23,165.93) .. controls (391.27,165.93) and (393.73,168.4) .. (393.73,171.43) .. controls (393.73,174.47) and (391.27,176.93) .. (388.23,176.93) .. controls (385.2,176.93) and (382.73,174.47) .. (382.73,171.43) -- cycle ;
			\draw   (397.53,239.23) .. controls (397.53,236.2) and (400,233.73) .. (403.03,233.73) .. controls (406.07,233.73) and (408.53,236.2) .. (408.53,239.23) .. controls (408.53,242.27) and (406.07,244.73) .. (403.03,244.73) .. controls (400,244.73) and (397.53,242.27) .. (397.53,239.23) -- cycle ;
			\draw   (565.93,71.23) .. controls (565.93,68.2) and (568.4,65.73) .. (571.43,65.73) .. controls (574.47,65.73) and (576.93,68.2) .. (576.93,71.23) .. controls (576.93,74.27) and (574.47,76.73) .. (571.43,76.73) .. controls (568.4,76.73) and (565.93,74.27) .. (565.93,71.23) -- cycle ;
			\draw   (609.33,111.83) .. controls (609.33,108.8) and (611.8,106.33) .. (614.83,106.33) .. controls (617.87,106.33) and (620.33,108.8) .. (620.33,111.83) .. controls (620.33,114.87) and (617.87,117.33) .. (614.83,117.33) .. controls (611.8,117.33) and (609.33,114.87) .. (609.33,111.83) -- cycle ;
			\draw   (629.93,170.83) .. controls (629.93,167.8) and (632.4,165.33) .. (635.43,165.33) .. controls (638.47,165.33) and (640.93,167.8) .. (640.93,170.83) .. controls (640.93,173.87) and (638.47,176.33) .. (635.43,176.33) .. controls (632.4,176.33) and (629.93,173.87) .. (629.93,170.83) -- cycle ;
			\draw   (508.73,303.63) .. controls (508.73,300.6) and (511.2,298.13) .. (514.23,298.13) .. controls (517.27,298.13) and (519.73,300.6) .. (519.73,303.63) .. controls (519.73,306.67) and (517.27,309.13) .. (514.23,309.13) .. controls (511.2,309.13) and (508.73,306.67) .. (508.73,303.63) -- cycle ;
			\draw   (568.73,287.23) .. controls (568.73,284.2) and (571.2,281.73) .. (574.23,281.73) .. controls (577.27,281.73) and (579.73,284.2) .. (579.73,287.23) .. controls (579.73,290.27) and (577.27,292.73) .. (574.23,292.73) .. controls (571.2,292.73) and (568.73,290.27) .. (568.73,287.23) -- cycle ;
			\draw   (443.13,286.43) .. controls (443.13,283.4) and (445.6,280.93) .. (448.63,280.93) .. controls (451.67,280.93) and (454.13,283.4) .. (454.13,286.43) .. controls (454.13,289.47) and (451.67,291.93) .. (448.63,291.93) .. controls (445.6,291.93) and (443.13,289.47) .. (443.13,286.43) -- cycle ;
			\draw    (451.4,72.4) -- (508.3,57.6) ;
			\draw    (410.8,109.4) -- (441,78.4) ;
			\draw    (519.73,303.63) -- (569.63,290.83) ;
			\draw    (519.3,56.6) -- (565.8,69) ;
			\draw    (611,108.4) -- (575.9,74.6) ;
			\draw    (388.23,165.93) -- (404.7,119) ;
			\draw    (388.23,176.93) -- (400.2,234.6) ;
			\draw    (622.4,233) -- (635.43,176.33) ;
			\draw    (406.4,244.6) -- (443.6,283.8) ;
			\draw    (453.2,289.8) -- (508.73,303.63) ;
			\draw    (635.43,165.33) -- (617.9,117) ;
			\draw    (579.2,283.8) -- (617.6,244.2) ;
			\draw    (513.8,62.1) -- (571.4,282) ;
			\draw    (513.8,62.1) -- (450.2,281.6) ;
			\draw    (569,76.9) -- (514.23,298.13) ;
			\draw    (569,76.9) -- (407.6,235.6) ;
			\draw    (610.6,116.1) -- (450.2,281.6) ;
			\draw    (610.6,116.1) -- (393.73,171.43) ;
			\draw    (629.93,170.83) -- (407.6,236.6) ;
			\draw    (413,118.1) -- (629.93,170.83) ;
			\draw    (393.73,171.43) -- (614.6,237) ;
			\draw    (450.2,79.7) -- (614.6,236) ;
			\draw    (413,118.1) -- (564.24,275.59) -- (571.4,283) ;
			\draw    (450.2,79.7) -- (513.23,298.13) ;
			\draw    (298.5,179.5) -- (356.25,179.02) ;
			\draw [shift={(358.25,179)}, rotate = 179.52] [color={rgb, 255:red, 0; green, 0; blue, 0 }  ][line width=0.75]    (10.93,-3.29) .. controls (6.95,-1.4) and (3.31,-0.3) .. (0,0) .. controls (3.31,0.3) and (6.95,1.4) .. (10.93,3.29)   ;
			\draw [color={rgb, 255:red, 74; green, 144; blue, 226 }  ,draw opacity=1 ]   (104.25,63.5) -- (135.33,54.55) ;
			\draw [shift={(137.25,54)}, rotate = 163.94] [color={rgb, 255:red, 74; green, 144; blue, 226 }  ,draw opacity=1 ][line width=0.75]    (10.93,-3.29) .. controls (6.95,-1.4) and (3.31,-0.3) .. (0,0) .. controls (3.31,0.3) and (6.95,1.4) .. (10.93,3.29)   ;
			\draw [color={rgb, 255:red, 74; green, 144; blue, 226 }  ,draw opacity=1 ]   (189.75,61) -- (161.18,53.51) ;
			\draw [shift={(159.25,53)}, rotate = 14.7] [color={rgb, 255:red, 74; green, 144; blue, 226 }  ,draw opacity=1 ][line width=0.75]    (10.93,-3.29) .. controls (6.95,-1.4) and (3.31,-0.3) .. (0,0) .. controls (3.31,0.3) and (6.95,1.4) .. (10.93,3.29)   ;
			\draw [color={rgb, 255:red, 74; green, 144; blue, 226 }  ,draw opacity=1 ]   (132.25,95) -- (141.12,68.4) ;
			\draw [shift={(141.75,66.5)}, rotate = 108.43] [color={rgb, 255:red, 74; green, 144; blue, 226 }  ,draw opacity=1 ][line width=0.75]    (10.93,-3.29) .. controls (6.95,-1.4) and (3.31,-0.3) .. (0,0) .. controls (3.31,0.3) and (6.95,1.4) .. (10.93,3.29)   ;
			\draw [color={rgb, 255:red, 74; green, 144; blue, 226 }  ,draw opacity=1 ]   (162.75,92.5) -- (155.31,66.92) ;
			\draw [shift={(154.75,65)}, rotate = 73.78] [color={rgb, 255:red, 74; green, 144; blue, 226 }  ,draw opacity=1 ][line width=0.75]    (10.93,-3.29) .. controls (6.95,-1.4) and (3.31,-0.3) .. (0,0) .. controls (3.31,0.3) and (6.95,1.4) .. (10.93,3.29)   ;
			\draw [color={rgb, 255:red, 74; green, 144; blue, 226 }  ,draw opacity=1 ]   (473.25,300) -- (502.8,306.57) ;
			\draw [shift={(504.75,307)}, rotate = 192.53] [color={rgb, 255:red, 74; green, 144; blue, 226 }  ,draw opacity=1 ][line width=0.75]    (10.93,-3.29) .. controls (6.95,-1.4) and (3.31,-0.3) .. (0,0) .. controls (3.31,0.3) and (6.95,1.4) .. (10.93,3.29)   ;
			\draw [color={rgb, 255:red, 74; green, 144; blue, 226 }  ,draw opacity=1 ]   (554.75,300) -- (527.19,307.01) ;
			\draw [shift={(525.25,307.5)}, rotate = 345.74] [color={rgb, 255:red, 74; green, 144; blue, 226 }  ,draw opacity=1 ][line width=0.75]    (10.93,-3.29) .. controls (6.95,-1.4) and (3.31,-0.3) .. (0,0) .. controls (3.31,0.3) and (6.95,1.4) .. (10.93,3.29)   ;
			\draw [color={rgb, 255:red, 74; green, 144; blue, 226 }  ,draw opacity=1 ]   (526.75,267.5) -- (520.72,292.56) ;
			\draw [shift={(520.25,294.5)}, rotate = 283.54] [color={rgb, 255:red, 74; green, 144; blue, 226 }  ,draw opacity=1 ][line width=0.75]    (10.93,-3.29) .. controls (6.95,-1.4) and (3.31,-0.3) .. (0,0) .. controls (3.31,0.3) and (6.95,1.4) .. (10.93,3.29)   ;
			\draw [color={rgb, 255:red, 74; green, 144; blue, 226 }  ,draw opacity=1 ]   (499.25,266.5) -- (506.71,293.07) ;
			\draw [shift={(507.25,295)}, rotate = 254.32] [color={rgb, 255:red, 74; green, 144; blue, 226 }  ,draw opacity=1 ][line width=0.75]    (10.93,-3.29) .. controls (6.95,-1.4) and (3.31,-0.3) .. (0,0) .. controls (3.31,0.3) and (6.95,1.4) .. (10.93,3.29)   ;
			
			\draw (142.6,32.2) node [anchor=north west][inner sep=0.75pt]   [align=left] {0};
			\draw (210.2,52.6) node [anchor=north west][inner sep=0.75pt]   [align=left] {1};
			\draw (259.8,97.4) node [anchor=north west][inner sep=0.75pt]   [align=left] {2};
			\draw (262.6,235.4) node [anchor=north west][inner sep=0.75pt]   [align=left] {4};
			\draw (281.8,159.8) node [anchor=north west][inner sep=0.75pt]   [align=left] {3};
			\draw (213.4,291.4) node [anchor=north west][inner sep=0.75pt]   [align=left] {5};
			\draw (144.6,310.6) node [anchor=north west][inner sep=0.75pt]   [align=left] {6};
			\draw (70.6,295) node [anchor=north west][inner sep=0.75pt]   [align=left] {7};
			\draw (17.8,239.8) node [anchor=north west][inner sep=0.75pt]   [align=left] {8};
			\draw (2.6,162.6) node [anchor=north west][inner sep=0.75pt]   [align=left] {9};
			\draw (18.2,99.4) node [anchor=north west][inner sep=0.75pt]   [align=left] {10};
			\draw (59.8,54.6) node [anchor=north west][inner sep=0.75pt]   [align=left] {11};
			\draw (508,32.2) node [anchor=north west][inner sep=0.75pt]   [align=left] {0};
			\draw (575.6,52.6) node [anchor=north west][inner sep=0.75pt]   [align=left] {1};
			\draw (625.2,97.4) node [anchor=north west][inner sep=0.75pt]   [align=left] {2};
			\draw (628,235.4) node [anchor=north west][inner sep=0.75pt]   [align=left] {4};
			\draw (647.2,159.8) node [anchor=north west][inner sep=0.75pt]   [align=left] {3};
			\draw (578.8,291.4) node [anchor=north west][inner sep=0.75pt]   [align=left] {5};
			\draw (510,310.6) node [anchor=north west][inner sep=0.75pt]   [align=left] {6};
			\draw (436,295) node [anchor=north west][inner sep=0.75pt]   [align=left] {7};
			\draw (383.2,239.8) node [anchor=north west][inner sep=0.75pt]   [align=left] {8};
			\draw (368,162.6) node [anchor=north west][inner sep=0.75pt]   [align=left] {9};
			\draw (383.6,99.4) node [anchor=north west][inner sep=0.75pt]   [align=left] {10};
			\draw (425.2,54.6) node [anchor=north west][inner sep=0.75pt]   [align=left] {11};
			\draw (320,160) node [anchor=north west][inner sep=0.75pt]  [font=\normalsize] [align=left] { $U^6$};
		\end{tikzpicture}
		\caption{Perfect state transfer in $\uc(12)$ from the vertex $0$ to $6$ at time $6$}
		\label{c6}
	\end{figure}
	
	\begin{theorem}
		The only unitary Cayley graphs $\uc(n)$ exhibiting perfect state transfer are $K_2,~C_4,~C_6$ and $\uc(12)$. 
	\end{theorem}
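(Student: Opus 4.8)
The plan is to combine the structural restriction from Corollary~\ref{coro1} with the exact circulant criterion of Theorem~\ref{thl}. Since $\uc(n)=\cay(\Zl_n,U_n)$ is a circulant graph whose adjacency eigenvalues are the Ramanujan sums $\lambda_j=R(j,n)$ (Theorem~\ref{main}), its discriminant eigenvalues are $\mu_j=\frac{1}{\varphi(n)}R(j,n)$. By Corollary~\ref{coro1}, perfect state transfer forces $n=2^\alpha 3^\beta$, and condition~$(i)$ of Theorem~\ref{thl} forces $n$ to be even, hence $\alpha\geq 1$. It then remains to decide, among these $n$, which actually satisfy conditions~$(ii)$ and $(iii)$.

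The crux is an elimination step driven by Corollary~\ref{thl1}: if two consecutive eigenvalues coincide, then no perfect state transfer can occur. I would examine the two smallest nontrivial indices. Using the closed form in Theorem~\ref{ramanujan}, $\lambda_1=R(1,n)=\mu(n)$ and $\lambda_2=R(2,n)=\mu(n/2)\frac{\varphi(n)}{\varphi(n/2)}$ for even $n$. Both M\"obius values vanish precisely when the corresponding modulus is not squarefree: $\mu(n)=0$ as soon as $\alpha\geq 2$ or $\beta\geq 2$, and $\mu(n/2)=0$ as soon as $\alpha\geq 3$ or $\beta\geq 2$. Consequently, whenever $\alpha\geq 3$ or $\beta\geq 2$ one has $\lambda_1=\lambda_2=0$, so Corollary~\ref{thl1} rules out perfect state transfer. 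This leaves only the finitely many candidates with $\alpha\in\{1,2\}$ and $\beta\in\{0,1\}$, namely $n\in\{2,4,6,12\}$.

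Finally I would confirm that each survivor really exhibits perfect state transfer. Here $\uc(2)=K_2$ is covered by Lemma~\ref{complete}, while $\uc(4)=C_4$ and $\uc(6)=C_6$ are even cycles covered by Lemma~\ref{cycle}. For $\uc(12)$ I would verify Theorem~\ref{thl} directly: the discriminant eigenvalues form the multiset $\{1,\tfrac12,0,-\tfrac12,-1\}$, and choosing $\tau=6$ gives $T_6(1)=T_6(\pm\tfrac12)=T_6(-1)=1$ and $T_6(0)=-1$ via $T_6(\cos\theta)=\cos(6\theta)$; along the index $j=0,\dots,11$ the sequence $T_6(\mu_j)$ then alternates between $+1$ and $-1$, so conditions~$(ii)$ and $(iii)$ hold, and with $u-v=6$ perfect state transfer occurs (as depicted in Figure~\ref{c6}).

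I expect the elimination step to be the main obstacle: the whole argument hinges on finding a short, uniform witness (here the pair $\lambda_1,\lambda_2$) that simultaneously kills every $n$ with $\alpha\geq 3$ or $\beta\geq 2$. Verifying that the four survivors genuinely admit perfect state transfer is then routine, the only mild computation being the $\tau=6$ check for $\uc(12)$.
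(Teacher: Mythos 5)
Your proposal is correct and follows essentially the same route as the paper: Corollary \ref{coro1} together with condition $(i)$ of Theorem \ref{thl} reduces to even $n=2^\alpha 3^\beta$, the witness $\lambda_1=\lambda_2=0$ with Corollary \ref{thl1} eliminates all $n$ with $\alpha\geq 3$ or $\beta\geq 2$ (the paper phrases this as four separate cases rather than your uniform criterion), and the four survivors $K_2$, $C_4$, $C_6$, $\uc(12)$ are confirmed via Lemma \ref{complete}, Lemma \ref{cycle}, and the $\tau=6$ check of Theorem \ref{thl}. The only cosmetic difference is that the paper first uses periodicity of $\uc(12)$ to show $\tau=6$ is the unique admissible time before verifying it, whereas you simply exhibit $\tau=6$ directly, which suffices for existence.
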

	\begin{proof}
		If perfect state transfer occurs in $\uc(n)$, then by Corollary  \ref{coro1}, $n=2^\alpha 3^\beta$, where $\alpha ~\text{and}~ \beta $ are non-negative integers with $\alpha + \beta \neq 0 $. For $n= 2,~ 4 $ and $6$, the graphs $\uc(n)$ are  $K_2,~C_4$ and $C_6$, respectively. From Lemma \ref{complete} and Lemma \ref{cycle}, these graphs exhibit perfect state transfer.
		
		For $n=12$, we find from the proof of Lemma \ref{lastuclemma} that $\spec_P(\uc(12))=\{\pm 1,\pm\frac{1}{2},0\}$. Therefore the period of $\uc(12)$ is $12$. Suppose $\uc(12)$ exhibits perfect state transfer at time $\tau$. Since the period of $\uc(12)$ is $12$, we must have  $\tau\in\{1,2,\hdots,11\}$. Now $0,-\frac{1}{2}\in \spec_P(\uc(12))$, and therefore by Theorem \ref{thl}, $T_\tau(0)=\pm 1$ and $T_\tau(-\frac{1}{2})=\pm 1 $, that is, $T_\tau(\cos\frac{\pi}{2})=\pm1$ and $T_\tau(\cos \frac{2\pi}{3})=\pm 1$. Hence by \eqref{chb}, $\cos(\frac{\tau \pi}{2})=\pm 1$ and $\cos(\frac{2\tau \pi}{3})=\pm1$. Therefore $\tau\in 2\Zl$ and $\tau\in3\Zl$, and so $\tau\in6\Zl$. Thus $\tau=6$. Now it is easy to see that $T_6(\mu_j)=\pm1$ for $j\in\{0,\hdots,11\}$ and $T_6(\mu_j)\neq T_6(\mu_{j+1})$ for $j\in\{0,\hdots,10\}$. Therefore by Theorem \ref{thl}, $\uc(12)$ exhibits perfect state transfer from the vertex $u$ to the vertex $u+6$ at time $6$.

		By Theorem \ref{thl}, perfect state transfer does not occur in $\uc(n)$ for $n=3^{\beta}$. 
		
		For $n=2^{\alpha}~ (\alpha \geq 3)$ or $n=2^{\alpha}3~ (\alpha\geq 3$) or $n=3^\beta 2~(\beta \geq 2)$ or $n=2^\alpha3^\beta~(\alpha\geq 2~\text{and}~\beta\geq 2)$, we have from Theorem \ref{main} that $\ld_1 =0=\ld_2$. 
		Hence by Corollary \ref{thl1}, perfect state transfer does not occur in these cases either. Therefore the result follows.
	\end{proof}

	\section{Periodic integral regular graphs}\label{org}
	
	Every path and cycle on $n$ vertices are periodic, see \cite{mixedpaths}. Kubota \cite{bipartite} proved that if a bipartite regular graph with four distinct adjacency eigenvalues is periodic, then it is $C_6$.
	
	\begin{theorem}\cite{bipartite} \label{sho} Let $G$ be a bipartite regular graph with four distinct adjacency eigenvalues. Then $G$ is periodic if and only if $G$ is isomorphic to $C_6$. \end{theorem}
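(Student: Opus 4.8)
The plan is to reduce the statement to a short number-theoretic classification of an eigenvalue ratio, followed by an elementary counting argument; the converse direction is immediate since every cycle is periodic.

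First I would pin down the spectrum. Since $G$ is connected, $k$-regular and bipartite, the adjacency spectrum is symmetric about $0$, the degree $k$ is a simple eigenvalue, and $-k$ is a simple eigenvalue. As there are exactly four distinct eigenvalues and every nonzero eigenvalue is paired with its negative, $0$ cannot occur, so the spectrum must be $\{k,\theta,-\theta,-k\}$ with $0<\theta<k$. The minimal polynomial of $A$ is then $(x^2-k^2)(x^2-\theta^2)$, which lies in $\Zl[x]$; comparing coefficients gives $\theta^2\in\Zl$, so $\theta=\sqrt{m}$ for a positive integer $m$ that need not be a perfect square. (The Heawood graph, with spectrum $\{3,\sqrt2,-\sqrt2,-3\}$, shows such graphs can be non-integral, so Theorem \ref{thm1} does not apply directly.) Writing $\mu=\theta/k$, Lemma \ref{reg} gives $\spec_P(G)=\{\pm1,\pm\mu\}$, and by the spectral mapping theorem (Lemma \ref{evu}) periodicity of $G$ is equivalent to every $e^{\pm\iu\arccos(\pm\mu)}$ being a root of unity, which holds exactly when $\mu=\cos(r\pi)$ for some rational $r$.

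Next I would combine the two constraints on $\mu$. Because $\theta^2\in\Zl$ and $k\in\Zl$, we have $\mu^2=m/k^2\in\Ql$, so $\mu$ is either rational or a pure quadratic surd $\sqrt{q}$ with $q\in\Ql$. Intersecting this with $\mu=\cos(r\pi)$, I would invoke the classification of cosines of rational multiples of $\pi$ of degree at most two over $\Ql$: by Niven's theorem the only rational values are $0,\pm\tfrac12,\pm1$, and among the quadratic values $\cos(\pi/4),\cos(\pi/5),\cos(\pi/6),\cos(2\pi/5)$ only $\cos(\pi/4)=\sqrt2/2$ and $\cos(\pi/6)=\sqrt3/2$ are pure surds (the values at $\pi/5$ and $2\pi/5$ carry a nonzero rational part). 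Since $0<\mu<1$, this leaves exactly $\mu\in\{\tfrac12,\ \sqrt2/2,\ \sqrt3/2\}$.

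Finally I would eliminate all but one case by counting. Letting $a$ be the multiplicity of $\theta$ (equal to that of $-\theta$), we have $n=2+2a$, and comparing $\operatorname{tr}(A^2)=2k^2+2a\theta^2$ with the value $nk$ yields the identity $\theta^2=k(1+a-k)/a$. Substituting $\theta^2=\tfrac12k^2$ gives $k=2(a+1)/(a+2)$, and $\theta^2=\tfrac34k^2$ gives $k=4(a+1)/(3a+4)$; for $a\ge1$ neither expression is an integer, so the surd cases are impossible. Substituting $\theta^2=\tfrac14k^2$ gives $k=4-12/(a+4)$, forcing $a+4\mid12$; with $a\ge1$ the only options are $a=2$ (then $k=2$, $\theta=1$, $n=6$) and $a=8$ (then $k=3$, $\theta=3/2$, which is not an algebraic integer and is discarded). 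Thus $G$ is a connected $2$-regular graph on $6$ vertices, hence $G\cong C_6$; conversely $C_6$ has $\spec_P=\{\pm1,\pm\tfrac12\}$ and is periodic. The main obstacle is the number-theoretic step of the third paragraph — correctly characterising which $\cos(r\pi)$ satisfy $\mu^2\in\Ql$ — together with the need to run the elimination without assuming integrality, which the combinatorial trace identity handles cleanly.
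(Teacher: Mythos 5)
Your proposal is correct, but there is nothing in the paper to compare it against: the paper states this theorem as an import from Kubota \cite{bipartite} and gives no proof of it, so your argument is a genuinely self-contained alternative. I checked the chain and it holds up. Connectedness (the paper's standing assumption in Section \ref{prel}) and bipartiteness force the spectrum $\{\pm k,\pm\theta\}$ with $k$ and $-k$ simple, and the four-eigenvalue hypothesis rightly excludes $0$; integrality of the monic minimal polynomial $(x^2-k^2)(x^2-\theta^2)$ of the integer matrix $A$ gives $\theta^2\in\Zl$, which is the correct substitute for Theorem \ref{thm1} --- your Heawood-graph remark is an apt warning that integrality of $G$ cannot be assumed here; Lemmas \ref{reg}, \ref{period} and \ref{evu} reduce periodicity to $\theta/k=\cos(r\pi)$ with $r\in\Ql$; Niven's theorem plus the classification of quadratic cosines of rational multiples of $\pi$ (namely $\pm\sqrt{2}/2$, $\pm\sqrt{3}/2$, $\pm(\sqrt{5}\pm 1)/4$, of which only the first four have rational squares) correctly pins $\theta/k$ down to $\{1/2,\,\sqrt{2}/2,\,\sqrt{3}/2\}$; and your trace identity $\theta^2=k(1+a-k)/a$, obtained from comparing the spectral and combinatorial values of the trace of $A^2$, does the elimination as claimed: $2(a+1)/(a+2)$ and $4(a+1)/(3a+4)$ lie strictly between $1$ and $2$ for $a\geq 1$, so the surd cases die, while $\theta=k/2$ forces $(a+4)\divides 12$, leaving $(a,k)=(2,2)$, i.e.\ $C_6$, and $(a,k)=(8,3)$, which you correctly discard since $\theta=3/2$ is not an algebraic integer. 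The converse is immediate, as you say, and is also quoted in Section \ref{org} from \cite{mixedpaths}. Kubota's original proof works in the wider setting of bipartite regular graphs with up to five distinct eigenvalues and draws on the same two kinds of ingredients (algebraic constraints on cosine-type eigenvalues and spectral counting conditions); what your route buys is that the present paper could drop the external citation entirely and remain complete on this point, using only its own Lemmas \ref{reg}, \ref{period} and \ref{evu} together with two classical facts.
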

	In this section, we give a spectral characterization of integral regular periodic graphs. The following theorem is due to Hoffman.
	\begin{theorem} \cite{polynomial} \label{polynomial}
		Let $G$ be a connected k-regular graph with n vertices, and let $k>\lambda_2 >\lambda_3 > ... >\lambda_r$ be the distinct eigenvalues of the adjacency matrix $A$ of $G$. Then $q(A)=\frac{q(k)}{n} {J}$, where $q(x)=\prod_{i=2}^{r}(x-\lambda_i)$.	
	\end{theorem}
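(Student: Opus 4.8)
The plan is to diagonalise $A$ and exploit the fact that $q$ is tailor-made to annihilate every eigenvalue except $k$. First I would record the two structural facts supplied by regularity and connectivity. Since $G$ is $k$-regular, the all-ones vector $\mathbf{1}$ satisfies $A\mathbf{1}=k\mathbf{1}$, so $k$ is an eigenvalue of $A$ with eigenvector $\mathbf{1}$, and for a $k$-regular graph it is the largest eigenvalue in modulus. Since $G$ is connected, the Perron--Frobenius theorem guarantees that this top eigenvalue $k$ is \emph{simple} and that its eigenspace is exactly the line spanned by $\mathbf{1}$. This is the one place where connectivity is genuinely used, and I expect it to be the main (though standard) obstacle: without it, $k$ could have multiplicity equal to the number of components, and the eigenprojection below would not equal $\frac{1}{n}J$.

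Next I would invoke the spectral decomposition of the real symmetric matrix $A$. Writing $\lambda_1=k$ and letting $E_i$ denote the orthogonal projection onto the $\lambda_i$-eigenspace, we have $A=\sum_{i=1}^{r}\lambda_i E_i$, and for any polynomial $f$ the same reasoning that yields \eqref{sd} gives $f(A)=\sum_{i=1}^{r}f(\lambda_i)E_i$. Applying this to $q(x)=\prod_{i=2}^{r}(x-\lambda_i)$, each factor forces $q(\lambda_i)=0$ for $i\geq 2$, so all terms but the first drop out and
\[
q(A)=q(\lambda_1)E_1=q(k)E_1 .
\]

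Finally I would identify $E_1$ explicitly. Because the $k$-eigenspace is one-dimensional and spanned by the unit vector $\tfrac{1}{\sqrt{n}}\mathbf{1}$, the orthogonal projection onto it is $E_1=\tfrac{1}{n}\mathbf{1}\mathbf{1}^{t}=\tfrac{1}{n}J$. Substituting this into the previous display yields $q(A)=\frac{q(k)}{n}J$, as claimed. Once the simplicity of $k$ is in hand the whole argument is pure linear algebra, so no delicate estimates are required; the value $q(k)\neq 0$ also confirms that $q(A)$ is a nonzero multiple of $J$, consistent with the normalisation.
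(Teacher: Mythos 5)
Your proof is correct and complete: regularity gives $A\mathbf{1}=k\mathbf{1}$, connectivity plus Perron--Frobenius gives simplicity of $k$ with eigenspace spanned by $\mathbf{1}$, the spectral decomposition kills every term $q(\lambda_i)E_i$ for $i\geq 2$, and identifying $E_1=\tfrac{1}{n}J$ finishes the argument. Note that the paper itself offers no proof of this statement --- it is Hoffman's theorem, quoted with a citation --- so there is nothing internal to compare against; what you have written is the standard proof of that cited result.
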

	Let $G$  be a $k$-regular graph with $n$ vertices that are neither complete nor empty. Then $G$ is called \emph{strongly regular} with parameters $(n, k, a, c)$, denoted $\srg(n,k, a,c)$, if any two adjacent vertices of $G$ have $a$ common neighbors and any two non-adjacent vertices of $G$ have $c$ common neighbors.
	
	A graph $G$ is \emph{walk-regular} if its vertex-deleted subgraphs $G - u$ are cospectral for all $u\in V(G)$. For more information about walk-regular graphs, see \cite{walk}. The following result tells when a graph is walk-regular.
	\begin{theorem}\cite{walk}\label{walkr}
		A graph $G$ with adjacency matrix $A$ is walk-regular if and only if the diagonal entries of $A^r$ are constant for each non-negative integer $r$.
	\end{theorem}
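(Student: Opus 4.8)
The plan is to route both sides of the equivalence through the diagonal entries of the resolvent $(xI - A)^{-1}$, which acts as a common bridge between the combinatorial data (closed-walk counts) and the spectral data (characteristic polynomials of vertex-deleted subgraphs). Throughout, write $\phi(H,x) = \det(xI - A(H))$ for the characteristic polynomial of a graph $H$, so that $G$ is walk-regular precisely when $\phi(G-u,x)$ is independent of the vertex $u$.

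First I would record the two classical identities that make the resolvent diagonal computable in each of the two languages. On the combinatorial side, since $(A^r)_{uu}$ counts the closed walks of length $r$ rooted at $u$, the diagonal entry of the resolvent is the generating function
\begin{equation*}
\left[(xI - A)^{-1}\right]_{uu} = \sum_{r \geq 0} \frac{(A^r)_{uu}}{x^{r+1}},
\end{equation*}
obtained by expanding $(xI - A)^{-1} = \frac{1}{x}\sum_{r \geq 0}(A/x)^r$. On the spectral side, the adjugate (cofactor) formula for the matrix inverse gives
\begin{equation*}
\left[(xI - A)^{-1}\right]_{uu} = \frac{\det\!\big((xI - A)_{\hat u \hat u}\big)}{\det(xI - A)} = \frac{\phi(G - u, x)}{\phi(G, x)},
\end{equation*}
since deleting the $u$-th row and column of $xI - A$ produces exactly $xI - A(G - u)$, whose determinant is $\phi(G - u, x)$. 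Combining the two identities shows that the sequence $\big((A^r)_{uu}\big)_{r \geq 0}$ and the rational function $\phi(G - u, x)/\phi(G, x)$ carry the same information, for each fixed vertex $u$.

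With this in hand both implications follow by comparing across vertices. For the forward direction, suppose the diagonal entries of $A^r$ are constant in $u$ for every $r$. Then the generating functions, hence the ratios $\phi(G-u,x)/\phi(G,x)$, coincide for all $u$; since $\phi(G,x)$ does not depend on $u$, the polynomials $\phi(G - u, x)$ are all equal, so the subgraphs $G - u$ are pairwise cospectral and $G$ is walk-regular. Conversely, if $G$ is walk-regular then all $\phi(G - u, x)$ agree, so the resolvent diagonals $\phi(G-u,x)/\phi(G,x)$ agree, and reading off the coefficient of $x^{-(r+1)}$ in their expansions shows $(A^r)_{uu}$ is independent of $u$ for each $r$.

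The only real obstacle is justifying that the passage to power series is a faithful, coefficient-by-coefficient correspondence: two rational functions with identical Laurent expansions at infinity are equal, and conversely equal rational functions have equal expansions. This is routine once one observes that $\phi(G,x)$ is monic of degree $n = |V(G)|$ and common to every vertex, so the expansions are genuine formal power series in $1/x$ with well-defined coefficients. No case analysis is required; the entire argument rests on the single resolvent identity above.
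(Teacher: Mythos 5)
Your proof is correct. Note that the paper itself offers no proof of this statement: it is quoted directly from Godsil and McKay \cite{walk}, and your resolvent argument --- identifying $\bigl[(xI-A)^{-1}\bigr]_{uu}$ both with the closed-walk generating function $\sum_{r\geq 0}(A^r)_{uu}x^{-(r+1)}$ and, via the cofactor formula, with $\phi(G-u,x)/\phi(G,x)$ --- is essentially the classical argument underlying that reference, so your proposal matches the source rather than deviating from it.
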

	The following result uses some ideas from \cite{vandam}.
	\begin{lema}
		Let $G$ be a regular integral graph. If $G$ is periodic, then it is walk-regular.
	\end{lema}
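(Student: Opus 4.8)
The plan is to use the spectral criterion for walk-regularity recorded in Theorem \ref{walkr}: it suffices to show that the diagonal entries of $A^r$ are constant for every non-negative integer $r$. Since $G$ is $k$-regular, integral and periodic, Theorem \ref{thm1} forces every adjacency eigenvalue into $\{k, k/2, 0, -k/2, -k\}$, so $A$ has at most five distinct eigenvalues; write $d$ for this number. The minimal polynomial of $A$ then has degree $d$, so every $A^r$ is a linear combination of $I, A, \dots, A^{d-1}$, and it is enough to prove that $A^0, A^1, \dots, A^{d-1}$ all have constant diagonal. Now $(A^0)_{uu}=1$, $(A^1)_{uu}=0$ and $(A^2)_{uu}=k$ are constant for any $k$-regular graph, so only the diagonals of $A^3$ and, when $d=5$, of $A^4$ must be controlled. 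I would argue by cases on $d$, using throughout that the standing assumption makes $G$ connected, so the Perron eigenvalue $k$ is simple.

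If $d\le 3$ the reduction above already finishes the proof. For $d=4$ I would invoke Hoffman's identity (Theorem \ref{polynomial}): with $q(x)=\prod_{i=2}^{4}(x-\lambda_i)$ a monic cubic, $q(A)=\frac{q(k)}{n}J$, whose diagonal is the constant $q(k)/n$. Expanding $q(A)=A^3+c_2 A^2+c_1 A+c_0 I$ and reading off the $(u,u)$ entry, every term except $(A^3)_{uu}$ is already constant, so $(A^3)_{uu}=\frac{q(k)}{n}-c_2 k-c_0$ is forced to be constant; together with $A^0,A^1,A^2$ this settles $d=4$.

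For $d=5$ all five values, including $-k$, occur, so the connected graph $G$ is bipartite. I would exploit bipartiteness twice. First, $G$ has no closed walk of odd length, whence $(A^3)_{uu}=0$ is constant. Second, Hoffman's theorem applied with the monic quartic $q(x)=\prod_{i=2}^{5}(x-\lambda_i)$ gives $q(A)=\frac{q(k)}{n}J$; in the $(u,u)$ entry of $q(A)=A^4+c_3 A^3+c_2 A^2+c_1 A+c_0 I$ the term $c_3(A^3)_{uu}$ vanishes because $(A^3)_{uu}=0$, while $(A^2)_{uu}$, $(A)_{uu}$ and $(I)_{uu}$ are constant, so $(A^4)_{uu}$ is constant. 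Thus $A^0,\dots,A^4$ all have constant diagonal, and by the minimal-polynomial reduction every $A^r$ does as well, so $G$ is walk-regular by Theorem \ref{walkr}.

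The genuinely delicate point is the case $d=5$: there Hoffman's identity on its own yields only a single linear relation tying $(A^4)_{uu}$ to $(A^3)_{uu}$, so neither diagonal is individually pinned down. The argument closes only because bipartiteness independently forces $(A^3)_{uu}=0$, decoupling the two entries. This is where I expect the main work to concentrate, and it is precisely here that the special symmetric form $\{\pm k,\pm k/2,0\}$ of the periodic spectrum is essential.
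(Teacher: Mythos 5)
Your proof is correct and takes essentially the same route as the paper's: both rest on the diagonal criterion of Theorem \ref{walkr}, the spectral constraint $\spec_A(G)\subseteq\{\pm k,\pm\frac{k}{2},0\}$ from Theorem \ref{thm1}, Hoffman's identity (Theorem \ref{polynomial}), and the key observation that in the five-eigenvalue case $-k\in\spec_A(G)$ forces bipartiteness and hence $(A^3)_{uu}=0$. The only difference is organizational: the paper uses Hoffman's identity itself to write every $A^r$ as a rational combination of $I,J,A,A^2,A^3$ (respectively $I,J,A,A^2$ when the spectrum is a proper subset) and reads off the constant diagonal, whereas you reduce via the minimal polynomial and then invoke Hoffman only to pin down the diagonals of $A^3$ and $A^4$ --- the substance is identical.
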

	\begin{proof}
		Let $G$ be a $k$-regular integral graph. Then $G$ is periodic if and only if $\spec_A(G)\subseteq\{\pm k, \pm \frac{k}{2}, 0\}$.  
		
		If $\spec_A(G)=\{\pm k, \pm \frac{k}{2}, 0\}$, then $G$ is bipartite, and hence $(A^3)_{uu}=0$ for each $u\in V(G)$. Let $r$ be a non-negative integer. We now use Theorem \ref{polynomial} to prove that $A^r$ is a rational linear combination of $I,J,A^2$ and $A^3$. Note that $q(x)$ is a monic polynomial of degree $4$ with rational coefficients. Therefore $q(A)=\frac{q(k)}{n} {J}$ gives that $A^4$ is a rational linear combination of $I,J,A,A^2$ and $A^3$. Thus we find that $A^r=c_1I+c_2J+c_3A+c_4A^2+c_5A^3$ for some $c_1, c_2, c_3, c_4, c_5\in \Ql$. Therefore $(A^r)_{uu}=c_1+c_2+kc_4$ for each $u\in V(G)$. Hence by Theorem \ref{walkr}, $G$ is walk-regular.
		
		Now assume that $\spec_A(G)$ is a proper subset of $\{\pm k, \pm \frac{k}{2}, 0\}$. In this case, $q(x)$ is a monic polynomial of degree at most $3$ with rational coefficients. As in the previous paragraph, $A^r$ is a rational linear combination of $I,J,A$ and $A^2$ for each non-negative integer $r$. Therefore $A^r$ has constant diagonal for each non-negative integer $r$. Hence $G$ is walk-regular.
	\end{proof} 
	\begin{lema}\label{last1}
		Let $k$ be an odd positive integer and let $G$ be $k$-regular integral graph. Then $G$ is periodic if and only if it is $K_{k,k}$.
	\end{lema}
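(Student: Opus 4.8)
The plan is to first pin down the spectrum via Theorem \ref{thm1}. Since $G$ is $k$-regular and integral, that theorem says $G$ is periodic precisely when $\spec_A(G)\subseteq\{\pm k,\pm\frac{k}{2},0\}$. As $k$ is odd, the values $\pm\frac{k}{2}$ are not integers, so integrality forces $\spec_A(G)\subseteq\{k,0,-k\}$. Because $G$ is connected and $k\ge 1$, it contains an edge, and hence (by eigenvalue interlacing with a $K_2$ subgraph) its least eigenvalue is at most $-1$; the only admissible negative value in $\{k,0,-k\}$ is $-k$, so $-k\in\spec_A(G)$. A connected $k$-regular graph having $-k$ as an eigenvalue is bipartite, and by the Perron--Frobenius theorem both $k$ and $-k$ are simple eigenvalues.

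Next I would pass to the biadjacency matrix. Writing the bipartition as $X\cup Y$, with $|X|=|Y|=m$ by $k$-regularity, gives $A=\begin{pmatrix}0&B\\ B^{t}&0\end{pmatrix}$, whose eigenvalues are $\pm\sigma_i$ for the singular values $\sigma_i$ of $B$. Since the nonzero eigenvalues of $A$ are only $\pm k$, each simple, the matrix $B$ has a single nonzero singular value; that is, $B$ has rank $1$. The decisive step is then purely combinatorial: a $0$-$1$ matrix of rank $1$ has all of its nonzero rows equal, while $k$-regularity with $k\ge 1$ forbids any zero row or column. Hence every row of $B$ equals a fixed $0$-$1$ vector $\mathbf v$ with exactly $k$ ones, so $B=\mathbf 1\,\mathbf v^{t}$ and the $j$-th column sum of $B$ equals $m\,v_j$. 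Regularity forces every column sum to be $k$, which rules out $v_j=0$ and yields $m=k$; therefore $\mathbf v=\mathbf 1$, $B$ is the $k\times k$ all-ones matrix, and $G\cong K_{k,k}$.

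For the converse, $K_{k,k}$ is $k$-regular with $\spec_A(K_{k,k})=\{k,0,-k\}\subseteq\{\pm k,\pm\frac{k}{2},0\}$, so it is integral and, by Theorem \ref{thm1}, periodic. I expect the only genuine obstacle to be the rank-$1$ reduction together with the ensuing identification of $B$ with the all-ones matrix; the remaining ingredients are a direct application of Theorem \ref{thm1} and standard facts about the spectra of connected regular bipartite graphs.
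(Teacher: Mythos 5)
Your proof is correct, but it follows a genuinely different route from the paper's. The paper splits into cases according to the possible spectra ($\{\pm k\}$ and $\{\pm k,0\}$, ruled in by the trace-zero condition), and in each case invokes Hoffman's polynomial theorem (Theorem \ref{polynomial}): in the first case $A+kI=\frac{2k}{n}J$ forces $k=1$, $n=2$; in the second case $A^2+kA=\frac{2k^2}{n}J$ is used to read off strongly regular parameters $(2k,k,0,k)$, and a bipartite $\srg(2k,k,0,k)$ is identified as $K_{k,k}$. You instead argue structurally: after deducing $\spec_A(G)\subseteq\{k,0,-k\}$ and that $-k$ must occur (your interlacing argument works, though the paper's trace-zero observation does the same job more cheaply), you use Perron--Frobenius to get bipartiteness and simplicity of $\pm k$, pass to the biadjacency matrix $B$, note that the singular-value correspondence forces $\operatorname{rank}(B)=1$, and then the combinatorial rank-one analysis of a $0$-$1$ matrix with constant row and column sums pins down $B$ as the all-ones matrix. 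Your approach is more self-contained (no Hoffman theorem, no strongly regular graph parameters) and it handles $k=1$ and $k\geq 3$ uniformly, whereas the paper needs the separate $\{\pm k\}$ case for $K_{1,1}$; the paper's approach, on the other hand, is the same template (Hoffman plus parameter counting) reused in Lemmas \ref{hgg} and \ref{hgg2}, so it keeps Section 5 methodologically uniform. One small slip: your converse asserts $\spec_A(K_{k,k})=\{k,0,-k\}$, which fails for $k=1$ (the spectrum of $K_{1,1}$ is $\{1,-1\}$, with no zero eigenvalue); the inclusion $\spec_A(K_{k,k})\subseteq\{\pm k,\pm\frac{k}{2},0\}$ that you actually need is of course still true.
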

	\begin{proof}
		Let $k$ be an odd positive integer and $G$ be a $k$-regular integral graph. Thus,  $\pm\frac{k}{2}\notin\spec_A(G)$, and therefore by Theorem \ref{thm1}, $G$ is periodic if and only if $\spec_A(G)\subseteq \{\pm k, 0\}$. Since $\text{Trace}(A)=0$, either $\spec_A(G)=\{\pm k\}$ or $\spec_A(G)=\{\pm k, 0\}$.

		\textbf{Case 1.} $\spec_A(G)=\{\pm k\}$. By Theorem \ref{polynomial}, we have $$A+kI=\frac{2k}{n}J.$$ From this equation, we find that $k=1$ and $n=2$. So, the graph must be $K_{1,1}$.
		
		\textbf{Case 2.} $\spec_A(G)=\{\pm k, 0\}$. It is a strongly regular graph since $G$ has three distinct eigenvalues. Let $(n,k,a,c)$ be the parameter of $G$. By Theorem \ref{polynomial}, we have \begin{equation}\label{eqa2}    
			A^2+kA=\frac{2k^2}{n}J.\end{equation} 
		Equating a diagonal entry from both sides of \eqref{eqa2}, we find that $n=2k$. Let $u$ and $v$ be two adjacent vertices of $G$. Equating the $uv$-th entry from both sides of \eqref{eqa2}, we have $a=0$. Similarly, for non-adjacent vertices, we find $c=k$. Therefore $G$ is $\srg(2k,k,0,k)$. Since $-k\in \spec_A(G)$, the graph is bipartite. It is easy to see that a bipartite $\srg(2k,k,0,k)$ is indeed $K_{k,k}$.
	\end{proof}

	\begin{lema}
		The only $2$-regular integral periodic graphs are $C_3$, $C_4$ and $C_6$.
	\end{lema}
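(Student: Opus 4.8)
The plan is to reduce to cycles and then apply the eigenvalue characterization of Theorem \ref{thm1} with $k=2$. Since the graphs considered here are connected and $G$ is $2$-regular, $G$ must be a single cycle $C_n$ for some $n \geq 3$, and $C_n = \cay(\Zl_n,\{\pm 1\})$. By \eqref{evc}, the adjacency eigenvalues of $C_n$ are $\lambda_j = \omega_n^{j} + \omega_n^{-j} = 2\cos\!\left(\frac{2\pi j}{n}\right)$ for $0 \leq j \leq n-1$.

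For the forward direction I would use that $G$ is integral, $2$-regular and periodic, so Theorem \ref{thm1} forces $\spec_A(G) \subseteq \{0,\pm 1,\pm 2\}$. It then suffices to determine for which $n$ the eigenvalue $\lambda_1 = 2\cos\!\left(\frac{2\pi}{n}\right)$ is an integer. For $n \geq 3$ we have $\frac{2\pi}{n} \in \left(0,\frac{2\pi}{3}\right]$, hence $\cos\!\left(\frac{2\pi}{n}\right) \in \left[-\tfrac12, 1\right)$ and so $\lambda_1 \in [-1,2)$. The only integers in this interval are $-1$, $0$ and $1$; solving $2\cos\!\left(\frac{2\pi}{n}\right) = -1, 0, 1$ gives $n = 3, 4, 6$ respectively. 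This range estimate is the only step needing care: it is precisely what forbids $\lambda_1$ from reaching the value $2$ (attained only in the degenerate case $n=1$) and hence rules out every $n \notin \{3,4,6\}$.

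For the converse I would verify directly from \eqref{evc} that $\spec_A(C_3) = \{2,-1\}$, $\spec_A(C_4) = \{2,0,-2\}$ and $\spec_A(C_6) = \{2,1,-1,-2\}$. In each case the spectrum consists of integers lying in $\{0,\pm 1,\pm 2\}$, so all three graphs are integral, and Theorem \ref{thm1} shows each is periodic. Combining the two directions yields that $C_3$, $C_4$ and $C_6$ are precisely the $2$-regular integral periodic graphs. No serious obstacle arises: once $G$ is identified as a cycle, the problem collapses to the elementary fact that $2\cos(2\pi/n)$ is an integer for only finitely many $n \geq 3$.
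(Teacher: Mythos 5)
Your proof is correct and follows essentially the same route as the paper: reduce connected $2$-regular graphs to cycles $C_n$, then characterize the integral cycles. The only difference is that the paper simply cites the well-known facts that the integral cycles are $C_3$, $C_4$, $C_6$ and that all cycles are periodic, whereas you supply the details yourself via the circulant eigenvalue formula \eqref{evc} (the range argument for $2\cos(2\pi/n)$) and verify periodicity through Theorem \ref{thm1}.
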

	\begin{proof}
		Note that connected 2-regular graphs are the cycles $C_n$ for $n\geq3$. It is also well known that the only integral cycles are $C_3$, $C_4$ and $C_6$. Hence the result follows.
	\end{proof} 
	\begin{lema}\label{hgg}
		Let $m$ be a positive integer with $m\geq 2$ and $k=2m$. Let $G$ be a k-regular integral non-bipartite graph. Then $G$ is periodic if and only if
		either $G=K_{\frac{k}{2},\frac{k}{2},\frac{k}{2}}$ or $\spec_A(G)=\{ k, \pm \frac{k}{2} , 0\}$.
	\end{lema}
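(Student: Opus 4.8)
The plan is to reduce periodicity to a condition on $\spec_A(G)$ via Theorem~\ref{thm1}, enumerate the finitely many admissible spectra, discard the impossible ones by trace identities, and identify the single nontrivial graph using strongly regular graph theory. First I would reduce. Since $G$ is connected and $k$-regular, $k$ is a simple eigenvalue, and since $G$ is non-bipartite, $-k\notin\spec_A(G)$. Combining this with Theorem~\ref{thm1}, $G$ is periodic if and only if $\spec_A(G)\subseteq\{k,\tfrac{k}{2},-\tfrac{k}{2},0\}$ with $k\in\spec_A(G)$. As $\text{Trace}(A)=0$ while $-\tfrac{k}{2}$ is the only negative value available, the eigenvalue $-\tfrac{k}{2}$ must appear. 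Hence a periodic $G$ has $\spec_A(G)$ equal to the full set $\{k,\pm\tfrac{k}{2},0\}$ (the second alternative of the lemma) or to one of the proper subsets $\{k,-\tfrac{k}{2}\}$, $\{k,\tfrac{k}{2},-\tfrac{k}{2}\}$, $\{k,-\tfrac{k}{2},0\}$.

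For the forward implications the work is light: if $\spec_A(G)=\{k,\pm\tfrac{k}{2},0\}$ then periodicity is immediate from Theorem~\ref{thm1}, while writing $A(K_{m,m,m})=(J_3-I_3)\otimes J_m$ gives $\spec_A(K_{m,m,m})=\{k,-\tfrac{k}{2},0\}\subseteq\{\pm k,\pm\tfrac{k}{2},0\}$, so $K_{m,m,m}$ is periodic too. For the converse I would first eliminate two of the proper subsets using only $\text{Trace}(A)=0$ and $\text{Trace}(A^2)=nk$ together with the multiplicities. The subset $\{k,-\tfrac{k}{2}\}$ forces the multiplicity of $-\tfrac{k}{2}$ to be $2$, hence $n=3$, contradicting $k\ge 4$; the subset $\{k,\tfrac{k}{2},-\tfrac{k}{2}\}$ gives, after the two trace equations, $n=3k/(4-k)$, which is not a positive integer for any $k\ge 4$. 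This leaves $\{k,-\tfrac{k}{2},0\}$ as the only proper subset to analyze.

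The heart of the argument, and the step I expect to be the main obstacle, is showing that a connected $k$-regular integral graph with $\spec_A(G)=\{k,-\tfrac{k}{2},0\}$ must be $K_{m,m,m}$. Having three distinct eigenvalues, $G$ is strongly regular, and I would read off its parameters from Hoffman's theorem (Theorem~\ref{polynomial}) with $q(x)=x\bigl(x+\tfrac{k}{2}\bigr)$: equating the diagonal and the two off-diagonal cases in $q(A)=\tfrac{q(k)}{n}J$ yields $n=3m$, $a=m$, and $c=2m$, so $G=\srg(3m,2m,m,2m)$. To conclude uniqueness I would pass to the complement: $\bar G$ is $\srg(3m,m-1,m-2,0)$, and an SRG with $c=0$ (equivalently $a=k-1$) is a disjoint union of cliques, which here forces $\bar G=3K_m$ and therefore $G=K_{m,m,m}$. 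Equivalently, $c=k$ means any two non-adjacent vertices of $G$ share all their neighbors, so non-adjacency is an equivalence relation whose classes are independent sets of common size $m$, exhibiting $G$ as a complete tripartite graph. The delicate points are getting the SRG parameters exactly right and invoking connectedness of $G$, so that the complement $3K_m$ corresponds to the single graph $K_{m,m,m}$ rather than a disconnected one.
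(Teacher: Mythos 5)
Your proposal is correct and follows essentially the same route as the paper: reduce periodicity to $\spec_A(G)\subseteq\{k,\pm\frac{k}{2},0\}$ via Theorem~\ref{thm1} and non-bipartiteness, enumerate the same four candidate spectra using $\mathrm{Trace}(A)=0$, kill the two degenerate subsets by elementary counting, and identify the case $\{k,-\frac{k}{2},0\}$ as $\srg(3m,2m,m,2m)=K_{m,m,m}$ via Hoffman's theorem. The only differences are cosmetic (you use trace/multiplicity identities where the paper equates entries of Hoffman's identity for the two impossible cases), and you in fact supply a detail the paper glosses over, namely the argument via the complement $\srg(3m,m-1,m-2,0)=3K_m$ (equivalently, $c=k$ making non-adjacency an equivalence relation) that pins down $K_{m,m,m}$ as the unique graph with those parameters.
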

	\begin{proof}
		Let $G$ be a $k$-regular integral non-bipartite graph, where $k=2m$ $(m\geq 2)$. Note that by Theorem \ref{thm1}, $G$ is periodic if and only if $\spec_A(G)\subseteq \{ \pm 2m, \pm m, 0\}$. Since $G$ is non-bipartite, $-2m\notin \spec_A(G)$. Thus $G$ is periodic if and only if $\spec_A(G)\subseteq\{2m,\pm m,0\}$. Note that Trace$(A)=0$. Therefore four possible cases arise.

		\textbf{Case 1.} $\spec_A(G)= \{ 2m,  -m\}$. By Theorem \ref{polynomial}, we have $$A+mI=\frac{3m}{n}J.$$ We easily get a contradiction by equating entries of both sides of this equation.
		
		\textbf{Case 2.} $\spec_A(G)= \{ 2m,  -m, 0\}$. In this case, $G$ is strongly regular, and let $G=\srg(n,2m, a,c)$. By Theorem \ref{polynomial}, we have \begin{equation*}
			A^2+mA=\frac{6m^2}{n}J.\end{equation*} Equating entries of both sides of this equation, we find that $n=3m, a=m$ and $c=2m$. Therefore the graph is $K_{m,m,m}$.
		
		\textbf{Case 3.} $\spec_A(G)= \{ 2m,  \pm m\}$. By Theorem \ref{polynomial}, we have $$A^2-m^2I=\frac{3m^2}{n}J.$$ Equating the diagonal entries of both sides of this equation, we have $2-m=\frac{3m}{n}$, which does not hold true for $m\geq 2$.
		
		\textbf{Case 4.} $\spec_A(G)= \{ 2m, \pm m, 0\}$. In this case, $G$ is periodic.
		
		Combining all the cases, the result follows.
	\end{proof}

	\begin{lema}\label{hgg2}
		Let $m$ be a positive integer with $m\geq 2$ and let $k=2m$. Let $G$ be a $k$-regular integral bipartite graph. Then $G$ is periodic if and only if
		either $G=K_{k,k}$ or $\spec_A(G)=\{\pm k, \pm \frac{k}{2} , 0\}$.
	\end{lema}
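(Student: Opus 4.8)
The plan is to follow the template of Lemma~\ref{hgg}, replacing the non-bipartite hypothesis by the symmetry of the bipartite spectrum. By Theorem~\ref{thm1}, $G$ is periodic if and only if $\spec_A(G)\subseteq\{\pm 2m,\pm m,0\}$, so the reverse implication is immediate: $K_{k,k}$ has spectrum $\{\pm k,0\}$, and $\{\pm k,\pm\frac{k}{2},0\}$ equals $\{\pm 2m,\pm m,0\}$, so both listed spectra lie inside $\{\pm 2m,\pm m,0\}$ and Theorem~\ref{thm1} applies. For the forward implication I would use that $G$ is connected, $k$-regular and bipartite: then $\pm k=\pm 2m$ are (simple) eigenvalues and the spectrum is symmetric about $0$, so $m\in\spec_A(G)$ exactly when $-m\in\spec_A(G)$, while $0$ may or may not occur. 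This pins $\spec_A(G)$ down to one of four sets: $\{\pm 2m\}$, $\{\pm 2m,0\}$, $\{\pm 2m,\pm m\}$, or $\{\pm 2m,\pm m,0\}$.

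The bulk of the work is to dispose of the first three cases using Hoffman's polynomial (Theorem~\ref{polynomial}), exactly as in the non-bipartite lemma. In the case $\spec_A(G)=\{\pm 2m\}$, the graph has two distinct eigenvalues, so $q(x)=x+2m$ gives $A+2mI=\frac{4m}{n}J$; comparing a diagonal entry forces $n=2$, impossible since $k=2m\geq 4$. In the case $\spec_A(G)=\{\pm 2m,0\}$, the graph has three distinct eigenvalues and is strongly regular; with $q(x)=x(x+2m)$ Hoffman yields $A^2+2mA=\frac{8m^2}{n}J$, and equating a diagonal entry, an adjacent off-diagonal entry, and a non-adjacent off-diagonal entry returns $n=2k$, $a=0$ and $c=k$. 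Thus $G=\srg(2k,k,0,k)$, and the final short step is to recognize that a bipartite graph with these parameters is $K_{k,k}$. I expect this identification, together with being careful that the entrywise comparisons use $(A^2)_{uu}=k$ and $(A^3)_{uu}=0$ from bipartiteness, to be the fiddliest part of the argument.

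For the case $\spec_A(G)=\{\pm 2m,\pm m\}$ there are four distinct eigenvalues, so Theorem~\ref{sho} says a periodic bipartite regular graph with four distinct eigenvalues must be $C_6$, whose degree forces $m=1$, contradicting $m\geq 2$; hence this case is empty. (Alternatively, Hoffman with $q(x)=(x-m)(x+m)(x+2m)$ gives $n=\frac{6m}{2-m}$, which is not a positive integer for $m\geq 2$.) The only surviving possibility is $\spec_A(G)=\{\pm 2m,\pm m,0\}=\{\pm k,\pm\frac{k}{2},0\}$, which is periodic by Theorem~\ref{thm1}. Collecting the cases shows that a periodic $G$ is either $K_{k,k}$ or has spectrum $\{\pm k,\pm\frac{k}{2},0\}$, giving the characterization. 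The main obstacle is really just bookkeeping: ensuring the four-case split is exhaustive via the bipartite symmetry, and carrying out the Case~2 parameter extraction cleanly enough to conclude $K_{k,k}$.
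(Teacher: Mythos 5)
Your proposal is correct and follows essentially the same route as the paper: Theorem \ref{thm1} reduces periodicity to the spectral inclusion, bipartite symmetry yields the same four-case split, Hoffman's theorem (Theorem \ref{polynomial}) kills the cases $\{\pm 2m\}$ and $\{\pm 2m,0\}$ (the latter via the strongly regular parameters $\srg(2k,k,0,k)$, hence $K_{k,k}$), and Theorem \ref{sho} rules out $\{\pm 2m,\pm m\}$ since $C_6$ is only $2$-regular. The only differences are cosmetic: you compare a diagonal entry in the first case where the paper compares a non-adjacent off-diagonal entry, and you add an optional direct Hoffman computation ($n=\frac{6m}{2-m}$) as an alternative to citing Theorem \ref{sho}.
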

	\begin{proof}
		Let $G$ be a $k$-regular integral bipartite graph, where $k=2m\ (m\geq 2)$. Note that by Theorem \ref{thm1}, $G$ is periodic if and only if $\spec_A(G)\subseteq \{\pm 2m, \pm m , 0\}$. Since Trace$(A)=0$ and the graph is bipartite, four cases arise. 
		
		\textbf{Case 1.}  $\spec_A(G)=\{\pm 2m\}$. Clearly $G$ is not complete. By Theorem \ref{polynomial}, we have \begin{equation*}
			A+2mI=\frac{4m}{n}J.\end{equation*} Let $u$ and $v$ be two non-adjacent vertices of $G$. Equating  $uv$-th entry of both sides of the previous equation, we have $m=0$, a contradiction.
		
		\textbf{Case 2.} $\spec_A(G)=\{\pm 2m, 0\}$. Since the graph has three distinct eigenvalues, it is strongly regular, and let $G=\srg(n,2m, a,c)$. By Theorem \ref{polynomial}, we have $$A^2+2mA=\frac{8m^2}{n}J.$$ Equating entries of both sides of the preceding equation, it is easy to find that $n=4m,\ a=0$ and $c=2m$. A bipartite $\srg(4m,2m,0,2m)$ is indeed $K_{k,k}$.
		
		\textbf{Case 3.} $\spec_A(G)=\{\pm 2m, \pm m\}$. By Theorem \ref{sho}, it follows that the graph is $C_6$, which is a $2$-regular graph. But we considered that $G$ is $2m$-regular, where $m\geq 2$. Therefore, this case is not possible.

		\textbf{Case 4.} $\spec_A(G)=\{\pm 2m, \pm m , 0\}$. In this case, the graph is periodic.
		
		Combining all the cases, the result follows.
	\end{proof} 
	Lemma \ref{last1} to Lemma \ref{hgg2} altogether give the following theorem.
	\begin{theorem}
		A graph $G$ is $k$-regular, integral and periodic if and only if it is either $C_6$ or complete bipartite or complete tripartite or $\spec_A(G)=\{k, \pm \frac{k}{2},0\}$ or $\spec_A(G)=\{\pm k,\pm \frac{k}{2},0\}$.		
	\end{theorem}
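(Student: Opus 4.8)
The plan is to prove both directions by exhausting cases on the regularity $k$ and on whether $G$ is bipartite, reducing each case to one of the lemmas already established. The unifying fact, from Theorem \ref{thm1}, is that for an integral $k$-regular graph periodicity is equivalent to $\spec_A(G) \subseteq \{\pm k, \pm \frac{k}{2}, 0\}$; since $\pm\frac{k}{2}$ can occur as an integer eigenvalue only when $k$ is even, the parity of $k$ governs which spectra are available, and this is what makes the casework natural.

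For the forward direction, I would first dispose of $k$ odd: here $\pm\frac{k}{2}\notin\spec_A(G)$, so Lemma \ref{last1} applies and forces $G\cong K_{k,k}$, a complete bipartite graph (this also absorbs $k=1$, where $K_{1,1}=K_2$). Next I would treat $k=2$ separately via the lemma on $2$-regular integral periodic graphs, which yields $C_3,C_4,C_6$; I would note that $C_3=K_{1,1,1}$ is complete tripartite and $C_4=K_{2,2}$ is complete bipartite, so only $C_6$ is genuinely new. Finally, for $k=2m$ with $m\geq 2$, I would split on bipartiteness: Lemma \ref{hgg} handles the non-bipartite subcase, giving either $K_{m,m,m}$ (complete tripartite) or $\spec_A(G)=\{k,\pm\frac{k}{2},0\}$, while Lemma \ref{hgg2} handles the bipartite subcase, giving either $K_{k,k}$ (complete bipartite) or $\spec_A(G)=\{\pm k,\pm\frac{k}{2},0\}$. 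Collecting the outcomes of all cases produces exactly the five listed alternatives.

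For the converse, I would verify that each listed graph is integral, $k$-regular, and has adjacency spectrum contained in $\{\pm k,\pm\frac{k}{2},0\}$, so that Theorem \ref{thm1} delivers periodicity. Concretely: $C_6$ has spectrum $\{\pm 2,\pm 1\}$; the balanced complete bipartite graph $K_{k,k}$ has spectrum $\{\pm k,0\}$; the balanced complete tripartite graph $K_{m,m,m}$ (so $k=2m$) has spectrum $\{k,-\frac{k}{2},0\}$; and the two remaining alternatives are stated directly in terms of spectra lying in $\{\pm k,\pm\frac{k}{2},0\}$. In each instance the required containment is immediate.

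Since the argument is essentially an assembly of the preceding lemmas, the main work is bookkeeping rather than a new idea, and there is no single hard step. The points demanding care are: (i) reading \emph{complete bipartite} and \emph{complete tripartite} as the balanced, hence $k$-regular, graphs $K_{k,k}$ and $K_{m,m,m}$, since only these are regular; (ii) ensuring the parity split is exhaustive and that the boundary case $k=2$ is not silently swallowed by the $m\geq 2$ lemmas; and (iii) correctly identifying the small graphs $C_3,C_4$ as degenerate multipartite graphs so that the final list is free of both redundancy and omissions. The only place truly requiring attention is confirming the spectra of the named families in the converse direction.
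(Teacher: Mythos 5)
Your proposal is correct and follows exactly the paper's route: the paper proves this theorem simply by combining Lemma \ref{last1} (odd $k$), the $2$-regular lemma, and Lemmas \ref{hgg}--\ref{hgg2} (even $k\geq 4$, split by bipartiteness), with Theorem \ref{thm1} supplying the converse. Your write-up just makes explicit the bookkeeping (parity split, identifying $C_3=K_{1,1,1}$ and $C_4=K_{2,2}$, verifying the spectra of the named families) that the paper leaves implicit.
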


	Let $F$ be a set of $t$ elements and $s$ be a positive integer. The \emph{Hamming graph} $H(s,t)$ has the vertex set $F^s$, the set of $s$-tuples of elements of $F$, and two vertices are adjacent if they differ by precisely at one co-ordinate. It is well known that $H(s,t)$ is $s(t-1)$-regular, and it is easy to calculate that $\spec_A(H(3,3))=\{6,\pm 3,0\}$. Thus $H(3,3)$ is a non-bipartite graph satisfying the condition of Case 4 in Lemma \ref{hgg}.

	Also from the proof of Lemma \ref{lastuclemma}, we find that $\spec_A(\uc(n)=\{\pm\varphi(n),\pm\frac{\varphi(n)}{2},0\}$ for $n=2^\alpha3^\beta$ with $\alpha\geq 1$ and $\beta\geq1$. Considering $m$ and $n$ such that $\varphi(n)=2m$, we see that $\uc(n)$ is a bipartite graph satisfying the condition of Case 4 in Lemma \ref{hgg2}.

	Thus we see that there are $k$-regular integral periodic graphs whose adjacency spectrum is either $\{ k,\pm \frac{k}{2},0\}$ or $\{\pm k,\pm \frac{k}{2},0\}$ for some even integer $k$. It is an interesting problem to characterize all such $k$-regular integral periodic graphs.

	\subsection*{Acknowledgments}
	The first author acknowledges the support provided by the Prime Minister’s Research Fellowship (PMRF) scheme of the Government of India (PMRF-ID: 1903298).
	

\end{document}